\definecolor{DarkBlue}{rgb}{0.2,0.2,0.4}
\newcommand{\Z}{\mathbb{Z}}
\newcommand{\R}{\mathbb{R}}
\newcommand{\C}{\mathbb{C}}
\newcommand{\Cs}{\ensuremath{\mathrm{C}^*}}
\newcommand{\Csr}{\ensuremath{\mathrm{C}_r^*}}
\newcommand{\Gr}{\widehat{G}_{\mathrm{temp}}}
\newcommand{\spin}{\text{\rm spin}}
\newcommand{\fin}{\text{\rm fin}}
\DeclareMathOperator{\Ad}{Ad}
\DeclareMathOperator{\ad}{ad}
\DeclareMathOperator{\Ind}{Ind}
\DeclareMathOperator{\Cliff}{Cliff}
\DeclareMathOperator{\infch}{inf.ch.}
\DeclareMathOperator{\Index}{Index}
\DeclareMathOperator{\module}{mod}
    \newcommand{\Compact}{\mathfrak{K}}
\renewcommand{\Re}{\mathop{\rm{Re}}}
\renewcommand{\Im}{\mathop{\rm{Im}}}
\newcommand{\Bullet}{\,\,\,\,\,\bullet\,\,\,\,\,}
\numberwithin{equation}{section}
\theoremstyle{plain}
\newtheorem*{theorem*}{Theorem}
\newtheorem*{lemma*}{Lemma}
\newtheorem{theorem}[equation]{Theorem}
\newtheorem{ck-conjecture}[equation]{Connes-Kasparov Isomorphism} 
\newtheorem{lemma}[equation]{Lemma}
\newtheorem{proposition}[equation]{Proposition}
\newtheorem{corollary}[equation]{Corollary}
\theoremstyle{definition}
\newtheorem{definition}[equation]{Definition}
\newtheorem{example}[equation]{Example}
\newtheorem{remark}[equation]{Remark}
\newtheorem{remarks}[equation]{Remarks}
\newtheorem*{remark*}{Remark}
\title[On the Connes-Kasparov Isomorphism, I]{On the Connes-Kasparov Isomorphism, I:\\ The Reduced C*-algebra of a Real Reductive Group and the K-theory of the Tempered Dual}
\author{Pierre Clare}
\author{Nigel Higson}
\author{Yanli Song}
\author{Xiang Tang}
\begin{document}

\date{\today}

\begin{abstract}
This is the first of two papers dedicated to the detailed determination of the reduced $C^*$-algebra of a connected, linear, real reductive  group up to Morita equivalence, and a new and very explicit proof of the Connes-Kasparov conjecture for these groups using representation theory. In this part we shall  give details of the $C^*$-algebraic Morita equivalence and then explain how the Connes-Kasparov morphism in operator $K$-theory may be computed using what we call the matching theorem, which is a purely representation-theoretic result. We shall prove our matching theorem in the sequel, and indeed go further by giving  a simple, direct construction of the components of the tempered dual that have nontrivial $K$-theory using  David Vogan's approach to the classification of the tempered dual.
\end{abstract}

\maketitle


\section{Introduction}

If $\pi$ is a unitary representation of a locally compact group $G$ on a Hilbert space $H$, then the formula
\begin{equation}
\label{eq-rep-of-the-group-algebra}
\pi (f) = \int _G f(g)\pi (g) \; dg \qquad (f\in C_c^\infty (G) )
\end{equation}
defines a  representation of the 
group $C^*$-algebra $C^*(G)$ as bounded operators on $H$. In this way the category of unitary representations of $G$ becomes equivalent to the category of (nondegenerate) representations of $C^*(G)$. See   \cite[Ch.~13]{DixmierEnglish}. 
 
The $C^*$-algebra point of view equips the unitary dual of $G$  with a topology whose closed sets are in bijection with   two-sided ideals  $J\triangleleft C^*(G)$: the closed set determined by $J$ is the set of all irreducible unitary representations that vanish on $J$.  The \emph{reduced dual} of $G$ is by definition the closed subset of the unitary dual that is associated to the kernel of the left regular representation
\begin{equation}
\label{eq-regular-rep}
\lambda \colon C^*(G) \longrightarrow  \mathfrak{B} (L^2 (G)).
\end{equation}
If $G$ is a real reductive group, then the representations in the reduced dual are precisely Harish-Chandra's \emph{tempered} irreducible unitary representations \cite[Sec.~25]{HarishChandra66}. See for instance \cite[Thm.~12.23]{Knapp1} and \cite[Thms.~1 and 2]{CowlingHaagerupHowe}, together with \cite[Rmk.~(b), p.103]{CowlingHaagerupHowe} for a proof of this.    To a first approximation, the goals of this paper  are to determine the tempered dual of a real reductive group as a topological space, and to compute its $K$-theory.

The tempered dual may be identified  with the (topological) space of irreducible representations of the  \emph{reduced $C^*$-algebra}  $C^*_r (G)$, which is the quotient of the full group $C^*$-algebra by the kernel of the regular representation \eqref{eq-regular-rep}.   Our precise goals are to determine  $C^*_r (G)$ up to Morita equivalence, and to compute its $K$-theory.

Studying the tempered dual as a topological space (and at the same time studying the reduced $C^*$-algebra)  is  rewarded in   spectacular fashion by a beautiful isomorphism statement in $K$-theory that was conjectured by A. Connes \cite[Conjecture~4.1]{RosenbergNeptun}  and G. Kasparov  \cite[Sec.~5, Conjecture~1] {KasparovICM}.   This \emph{Connes-Kasparov isomorphism} is now viewed as part of the more  general \emph{Baum-Connes conjecture} about the reduced group $C^*$-algebra of any locally compact group (including any discrete group).  See  \cite[\S 4]{BCH} or \cite{BCsurvey19} for a recent survey of the status of the conjecture. 

In this work we shall present the full details of a representation-theoretic proof of the Connes-Kasparov isomorphism for connected, linear, real reductive  groups.  
Such a proof   was announced in outline form only by A. Wassermann  in \cite{NoteWassermann}, following pioneering work by M. Penington and R. Plymen \cite{PeningtonPlymen} and A. Valette \cite{Valette84, Valette85}. Subsequently, V. Lafforgue  gave an entirely new proof using his work on the Baum-Connes conjecture \cite{LafforgueInventiones}.  In some places we shall follow Wassermann's outline, but elsewhere we shall follow a quite different route.

Lafforgue explains in \cite[\S 2]{LafforgueICM} that if $G$ is   of equal rank and has compact center, so that it possesses discrete series representations, then one can recover Harish-Chandra's classification of the discrete series in terms of Harish-Chandra parameters as a consequence of  the Connes-Kasparov isomorphism for $G$. The starting point is the observation that each discrete series is isolated in the tempered dual, and so is  detectable in $K$-theory.  Our approach will in effect use Harish-Chandra's classification, rather than provide an independent verification. But in place of that we shall  give an explicit answer to a natural question that arises from the classification.  Harish-Chandra's parameters are weights (for a chosen maximal torus of a maximal compact subgroup $K$ of $G$) that satisfy a nonsingularity condition.  The same parameters determine Dirac-type operators on the symmetric space $G/K$, and as R. Parthasarathy \cite{Parthasarathy} and Atiyah-Schmid \cite{AtiyahSchmid} explained, the associated discrete series can be constructed as the space of harmonic spinors for this operator. If Harish-Chandra's nonsingularity condition is dropped,  there is \emph{still} an associated Dirac operator. We shall show that this Dirac operator determines not a single representation of $G$ but a single \emph{connected component} of the tempered dual, and we shall describe this component in full detail, thereby in some sense completing Harish-Chandra's parametrization.

We shall reach this goal in the second paper.  Our starting point here is an earlier paper \cite{CCH1} that gave a detailed account of the structure of $C^*_r (G)$, in the form of a Paley-Wiener type theorem for the reduced group $C^*$-algebra. 
Here we shall go one step further and give a computation of the $K$-theory of the reduced $C^*$-algebra using 
a dichotomy, first observed by
Wassermann in \cite{NoteWassermann}, that invokes the Knapp-Stein theory of intertwining operators  \cite{KS1,KS2} to separate the components of the tempered dual with trivial $K$-theory from those with nontrivial $K$-theory.  The result is that   the $K$-theory of the tempered dual is carried by those components   that are \emph{essential}, which means  that every Knapp-Stein intertwiner for the component belongs to the Knapp-Stein $R$-group. See Definition~\ref{def-essential-component}.  Moreover each essential component contributes one free generator to $K$-theory.

On the other hand,  Connes and Kasparov conjectured that the $K$-theory is freely generated by the  indices of indecomposable Dirac-type operators on the symmetric space $G/K$, where $K$ is a maximal compact subgroup in $G$.  These Dirac operators are easily parametrized using more or less just  the representation theory of $K$.  In contrast the set of essential components of the tempered dual is far more mysterious.

Nevertheless, it is a remarkable fact that the $K$-theory generators associated to the essential components and the $K$-theory generators associated to indecomposable Dirac operators are, up to sign, \emph{exactly the same}.  We shall conclude the present paper by precisely formulating a slightly weaker correspondence  in what we call the \emph{matching theorem}; 
see Theorem~\ref{thm-matching}.  Finally, we shall  explain  how the Connes-Kasparov isomorphism follows quickly from the matching theorem.  We shall present two arguments---one that relies on fundamental results by Kasparov in $KK$-theory, and one that is purely representation theoretic.

In the second paper we shall complete our account of the $K$-theory of the tempered dual and the Connes-Kasparov isomorphism by proving the matching theorem, and more. In contrast to   earlier works on the Connes-Kasparov isomorphism, we shall use David Vogan's approach to the construction and classification of the tempered dual \cite{Voganbook}. We shall show that Vogan's theory leads to  a simple construction of all the essential components of the tempered dual, and only those components,  all at once. Although Dirac operators are not used in our construction, the data used to construct an essential component turns out to be exactly the same as the data used to construct an indecomposable Dirac operator.  From the point of view of representation theory, this is the fundamental result underlying the Connes-Kasparov isomorphism.

{\bf Notes on Terminology.} Throughout this paper and the sequel, by a \emph{real reductive group}, we shall always mean the group $G$ of real points in a connected complex reductive linear algebraic  group that is defined over $\R$.  See for instance \cite[Ch.~19]{MilneAlgGroups}.  The main reason for this assumption is to guarantee that the theory developed in Vogan's monograph \cite{Voganbook}, which will be crucial in the sequel, will  apply to $G$. 
But \emph{we shall also assume that $G$  is itself connected,} which will considerably simplify both the statements of theorems and their proofs. In this paper we shall often refer to the text \cite{Knapp1}, so let us note here that  our  groups are the same (up to isomorphism) as the  linear connected reductive groups in  \cite[\S I.1]{Knapp1}.
 
We shall use fraktur letters such as $\mathfrak{g}$, etc, to refer to the Lie algebras of Lie groups such as $G$, etc, and not to the complexifications of these Lie algebras. This is because we shall have little use for the complexified Lie algebras in this first paper. But in the second paper we shall use the complexifications extensively and we shall follow a different convention.  

When discussing Dirac operators we shall follow conventions appropriate to index theory on manifolds.  These are a bit different from the conventions in representation theory, where so-called Dirac cohomology is studied.  But in the second paper we shall switch and follow the Dirac operator conventions that are used in representation theory.

\section{Parabolic Induction and the Reduced Group C*-Algebra}
\label{sec-parabolic-induction}

In this section  we shall  review  the  description of the reduced $\Cs$-algebra of a real reductive group that was obtained  in \cite{CCH1} using  results in tempered representation theory due to Harish-Chandra, R. Langlands and others. Then, following Wassermann \cite{NoteWassermann}, we shall refine that description so as to determine the reduced group $C^*$-algebra up to Morita equivalence.

We shall fix, once and for all in this paper, a maximal compact subgroup $K{\subseteq} G$ and Cartan decomposition $\mathfrak{g} = \mathfrak{k}\oplus \mathfrak{s}$.  We shall also fix a maximal abelian subspace $\mathfrak{a}{\subseteq} \mathfrak{s}$ and a compatible Iwasawa decomposition $G=KAN$.
The associated minimal parabolic subgroup is $P_{\min} = MAN$, where $M$ is the centralizer of $A$ in $K$.  See \cite[Ch.~V]{Knapp1}.

Since we shall be working with convolution algebras we  shall also  fix a Haar measure on $G$, as well as a normalized Haar measure on $K$.
 
\subsection*{Parabolic Induction}

We begin by reviewing some essential points about parabolic induction \cite{CCH1}.     A standard   parabolic subgroup of $G$  is any closed subgroup $P$ of $G$ that includes $P_{\min}$. It   decomposes as a semi-direct product $P= L_PN_P$ of a Levi component $L_P$ that is mapped to itself by the Cartan involution and  the unipotent radical $N_P$. 
Furthermore, the Levi component $L_P$ is the product of the closed subgroup $M_P$ that is generated by all compact subgroups of $L_P$ and the split component $A_P$ of $L_P$. This  leads to a Langlands decomposition $ P =  M_PA_PN_P $.  See \cite[\S V.5]{Knapp1} or  \cite[Ch.~VII]{KnappBeyond}.
 
If $\pi$ is a unitary representation of $L_P$, then we may form the (\emph{unitarily}) \emph{parabolically induced representation} $\Ind_P^G \pi$, which is the unitary action of $G$ by left translation on the Hilbert space  completion of the  vector space of smooth functions
\begin{equation}
    \label{eq-induced-space}
\bigl \{\,  f \colon G \to H_\pi :  f(gman) = e^{ - \rho(\log a)} \pi(ma)^{-1} f(g) \, \bigr \} .
\end{equation}
The completion is taken with respect to the inner product 
\begin{equation}
    \label{eq-induced-inner-product}
\langle f_1,f_2 \rangle = \int _K \langle f_1(k) ,  f_2 (k)\rangle   \, dk ,
\end{equation}
and $\rho\in \mathfrak{a}_P^*$ is defined by 
\[
\rho (X) = \tfrac 12 \operatorname{Trace} \bigl ( \operatorname{ad}_X \colon \mathfrak{n}_P \to \mathfrak{n}_P\bigr ) .
\]
See  \cite[\S VII.1]{Knapp1}.

\begin{definition}
\label{def-principal-series}
Let  $P$ be a standard parabolic subgroup and let  $\sigma$ be an irreducible,  discrete series representation of $M_P$ (that is, an irreducible unitary representation of $M_P$, all of whose matrix coefficient functions are square-integrable)  and let $\varphi\in \mathfrak{a}^*_P$.  The formula 
\[
 \sigma {\otimes} e^{i \varphi} \colon  m a \longmapsto  e^{i \varphi ( \log a )} \sigma(m)
\]
defines a unitary representation of $L_P{=}M_PA_P$ on the Hilbert space of the representation $\sigma$.  The associated \emph{$(P,\sigma)$-principal series representation} of $G$ 
is the  unitary representation $\pi_{\sigma, \varphi}$ that is  obtained from $\sigma {\otimes} e^{i \varphi}$ by parabolic induction.   We shall denote by $\Ind_P^G H_\sigma{\otimes} \C_{i \varphi}$ the Hilbert space on which it acts. 
\end{definition}

\begin{remark}
The Langlands decomposition $P{=}MAN$, which is used in the definition above,  will be important at a number of places below.  It supplies each principal series of representations with a natural base point, where $\varphi =0$. This base point is not always available in other contexts, including  for instance that of $p$-adic groups, and the paper \cite{AfgoustidisAubert22} examines some of the difficulties that can arise   as a result.
\end{remark}

If we restrict the functions in \eqref{eq-induced-space} to $K$, then all the representations in the $(P,\sigma)$-principal series can be regarded as acting on the following common Hilbert space (and we shall mostly do so from now on):

\begin{definition}
\label{def-induced-representation-hilbert-space}
We shall denote by $\Ind_P^G H_\sigma$   the Hilbert space completion of the space of smooth functions 
\begin{equation}
    \label{eq-induced-space2}
\bigl \{\,  f \colon K \to H_\sigma :  f(kh) =   \sigma(h)^{-1} f(k) \,\,\forall h \in K\cap L\,, \forall k\in K\, \bigr \}
\end{equation}
in the inner product \eqref{eq-induced-inner-product}.
\end{definition}

\subsection*{Intertwining Operators}
\label{sec-decomp-C*rG}
The following concept gives us a first, large-scale view of the tempered dual:

\begin{definition}
\label{def-assoc} 
Let  $P_1=M_1A_1N_1$ and $P_2=M_2A_2N_2$ be standard  parabolic subgroups and let $\sigma_1$, $\sigma_2$ be irreducible square-integrable representations of $M_1$ and $M_2$ respectively. The pairs $(P_1,\sigma_1)$ and $(P_2,\sigma_2)$ are \emph{associate} if there exists an element $k\in K $ such that 
\[
\Ad_k  [ L_{P_1}  ] =L_{P_2} \qquad\text{and}\qquad\Ad_k^*\sigma_1\simeq \sigma_2.
\]
We shall call an equivalence class of pairs under this relation an \emph{associate class} and use the notation $[P,\sigma]$.
\end{definition}

The    theorem below summarizes some of the important results of Harish-Chandra, Langlands and others, and it is the foundation for our study of the tempered dual.

\begin{theorem}
\label{thm-phil-cusp-forms}
The tempered dual  admits a disjoint union decomposition
\begin{equation*}
\Gr=\bigsqcup_{[P,\sigma]}\widehat{G}_{P,\sigma}
\end{equation*} 
as a topological space, where $\widehat{G}_{P,\sigma}$ consists of the irreducible components of the $(P,\sigma)$-principal series representations and the union is indexed by associate classes. Each  part $\widehat G_{P,\sigma}$ is a connected and open  subset of $\Gr$ \textup{(}and it follows that each part is also closed\textup{)}.
\end{theorem}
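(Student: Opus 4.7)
The plan is to deduce the theorem from the Paley-Wiener type description of $\Csr(G)$ established in \cite{CCH1}. That description realizes the reduced group $C^*$-algebra as a $C_0$-direct sum of two-sided ideals
\[
\Csr(G) \;\cong\; \bigoplus_{[P,\sigma]} \Csr(G)_{[P,\sigma]}
\]
indexed by associate classes, and explicitly identifies each summand with a $W_\sigma$-equivariant section algebra over $\mathfrak{a}_P^*$ with values in compact operators on $\Ind_P^G H_\sigma$, where $W_\sigma$ is the finite group of Knapp-Stein intertwiners of $\sigma$.

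First I would extract the disjoint union decomposition and the clopen property of each part directly from the $C_0$-direct sum structure: for any $C_0$-direct sum of $C^*$-algebras, the primitive ideal space is the topological disjoint union of the spectra of the summands, each of which is clopen in the total space.  Under the identification $\Gr \cong \widehat{\Csr(G)}$, the spectrum of $\Csr(G)_{[P,\sigma]}$ is, by the very construction of the ideals in \cite{CCH1}, exactly the set of irreducible constituents of the $(P,\sigma)$-principal series, namely $\widehat G_{P,\sigma}$.  The disjointness of the $\widehat G_{P,\sigma}$---which in representation-theoretic terms says that two non-associate principal series families share no irreducible constituents---is then built into this direct sum structure, and the union exhausts $\Gr$ because it exhausts the spectrum of $\Csr(G)$.

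The substantive remaining step, and the main obstacle, is connectedness of each $\widehat G_{P,\sigma}$.  For this I would argue by contradiction: a nontrivial clopen decomposition $\widehat G_{P,\sigma} = U_1 \sqcup U_2$ would correspond, via Dauns-Hofmann, to a decomposition of the center of the multiplier algebra of $\Csr(G)_{[P,\sigma]}$, and in particular would split the central subalgebra $C_0(\mathfrak{a}_P^*/W_\sigma)$ coming from the explicit description in \cite{CCH1} as a direct sum of two nonzero ideals.  Such a splitting would force the orbit space $\mathfrak{a}_P^*/W_\sigma$ to decompose into two disjoint nonempty open pieces.  But $\mathfrak{a}_P^*$ is a real vector space, hence connected, and $W_\sigma$ is finite, so the quotient is connected, giving the contradiction.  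The last parenthetical assertion---that each part is also closed---then follows from disjointness and openness of the finitely or countably many complementary parts that can accumulate onto a given one, together with the $C_0$-direct sum structure.
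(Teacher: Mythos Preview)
Your overall strategy---deduce everything from the $C_0$-direct sum decomposition of $\Csr(G)$ established in \cite{CCH1}---is exactly the route the paper takes, and your derivation of the disjoint-union and clopen assertions from that decomposition is correct and matches the paper's reasoning.

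The connectedness argument, however, has a genuine gap at the step ``in particular would split the central subalgebra $C_0(\mathfrak{a}_P^*/W_\sigma)$.'' A nontrivial central projection in $M(A)$ corresponds, via Dauns--Hofmann, to a clopen subset of $\operatorname{Prim}(A)$, but there is no a priori reason such a projection must lie in the given subalgebra $C_b(\mathfrak{a}_P^*/W_\sigma)\subseteq Z(M(A))$; equivalently, a clopen subset of $\operatorname{Prim}(A)$ need not be the preimage of a clopen subset of $\mathfrak{a}_P^*/W_\sigma$ under the structure map. Over points $\varphi$ where $\pi_{\sigma,\varphi}$ is reducible, the fibre of $\operatorname{Prim}(A)\to\mathfrak{a}_P^*/W_\sigma$ has several points, and one must rule out the possibility that these points are separated in the Jacobson topology. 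That they are not separated---each irreducible constituent of $\pi_{\sigma,\varphi}$ is a Fell limit of the nearby irreducible $\pi_{\sigma,\varphi'}$---is true, but it is exactly the representation-theoretic input you have not supplied, and once you supply it you have essentially reproved connectedness directly rather than via Dauns--Hofmann.

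The paper handles connectedness differently: it cites Lipsman's result that the Fell topology on each principal-series component agrees with the quotient topology inherited from the continuous parameter space $\mathfrak{a}_P^*$, from which connectedness is immediate, and remarks that the full description of $\Csr(G)$ in \cite{CCH1} also yields it.
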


As for the proof, it is explained by Lipsman in  \cite{Lipsman70} that for semisimple groups with finite center, the topology on each minimal principal series component of the tempered dual coincides with the ``natural topology'' inherited from 
the space of continuous parameters of the principal series, and in particular each principal series is connected. The general case of Lipsman's result (covering all reductive groups and all cuspidal principal series)  may be proved in the same way. But in any case, a complete proof of this and all the other assertions in Theorem~\ref{thm-phil-cusp-forms} is given in \cite{CCH1}. Theorem~6.8 in \cite{CCH1}, which is reproduced below as Theorem~\ref{thm-CsrG}, gives a nearly complete description of the reduced $C^*$-algebra of $G$, using which the spectrum of the reduced $C^*$-algebra is easily determined.  But that spectrum is precisely the tempered dual, as we noted in the Introduction. So for instance,  the fact that each principal series component is open and closed in the tempered dual follows from the direct sum decomposition in Theorem~6.8 of \cite{CCH1}.

To   probe the   equivalences within a single principal series family, as well as the possible reducibility of the representations within that family, one studies intertwining operators.

\begin{definition}
\label{def-Weyl-grp} 
The \emph{intertwining  group} associated with a pair $(P,\sigma)$ as above is the finite group 
\[
W_\sigma= \bigl \{w \in N_K(L_P) : \operatorname{Ad}_w ^* \sigma \simeq \sigma\, \bigr \} \big /  K\cap L_P.
\]
\end{definition}

The theory of intertwining operators, due to Knapp and Stein \cite{KS1,KS2}, associates to each $w\in W_\sigma$, and each $\varphi \in \mathfrak{a}^*_P$, a unitary operator 
\begin{equation}
\label{def-Uw}U_{w,\varphi}: \Ind_P^G H_\sigma\longrightarrow\Ind_P^G  H_\sigma, 
\end{equation}
that intertwines the principal series representations $\pi_{\sigma,\varphi}$ and $\pi_{\sigma, w(\varphi)}$, so that if $g\in G$, then 
\begin{equation}
    \label{eq-knapp-stein-intertwining-property}
U_{w,\varphi} \pi_{\sigma,\varphi}(g)  = \pi_{\sigma, w(\varphi)}(g) U_{w,\varphi} .
\end{equation}
Here $w(\varphi)(X) = \varphi (\Ad_{w^{-1}}(X))$.  The operators $U_{w,\varphi}$ vary strongly-contin\-uously with $\varphi\in \mathfrak{a}^*_P$.

  The construction of $U_{w,\varphi}$ involves a choice of unitary equivalence of representations $\Ad^*_w\sigma\simeq \sigma$.  By Schur's lemma the choice is unique up to a multiplicative scalar of modulus one, but it is not absolutely unique.  This leads to a cocycle relation
\begin{equation}
\label{eq-cocycle-relation}
U_{w_1,w_2(\varphi)}U_{w_2,\varphi} = c(w_1,w_2)U_{w_1w_2,\varphi}\qquad \forall \varphi \in \mathfrak{a}_P^*
\end{equation}
with $| c(w_1,w_2)| =1$. 

In fact the equivalences  $\Ad^*_w\sigma\simeq \sigma$ may be chosen so that  $c(w_1,w_2) =1$ for all $w_1,w_2\in W_\sigma$.  This is not trivial, but it is important for what follows, so let us describe the method.

\begin{definition} 
\label{def-w-prime-group}
\cite[\S 13]{KS2}
Denote by  $W_\sigma'\triangleleft W_\sigma$ the normal subgroup
\[
W'_\sigma=\bigl \{  w'\in W_\sigma \: : \:\text{The intertwiner $U_{w',0}$ acts as a scalar on $\Ind_P^G H_\sigma$}\bigr \}.
\]
Denote by $R_\sigma$ the quotient group 
$W_\sigma/ W_\sigma'$.
\end{definition}

For each $w'\in W_\sigma'$ we can choose an equivalence $\Ad^*_{w'}\sigma\simeq \sigma$  so that in fact each $U_{w',0}$ acts as the identity operator  on $\Ind_P^G H_\sigma$, and then having done so we obtain $c(w'_1,w'_2) = 1$ for all $w'_1,w'_2\in W'_\sigma$.

 \begin{theorem}
\label{th-R-group1}\cite[Thm. 13.4]{KS2}
The quotient group homomorphism from $W_\sigma$ to  $R_\sigma$ splits, and the intertwining group $W_\sigma$ therefore admits a semi-direct product decomposition $W_\sigma=W'_\sigma\rtimes R_\sigma$.
 \end{theorem}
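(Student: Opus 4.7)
The plan is to exploit the fact that $W'_\sigma$ turns out to be a \emph{reflection group} on $\mathfrak{a}_P^*$, after which the splitting follows from the standard chamber-stabilizer trick. First I would analyze $U_{w,0}$ via the Knapp–Stein rank-one reduction: if $w$ is written as a minimal product of simple reflections in the little Weyl group, $U_{w,\varphi}$ factors up to scalar as a composition of rank-one intertwiners indexed by the restricted roots $\alpha \in \Delta(\mathfrak{g},\mathfrak{a}_P)$ that $w$ sends to negatives. Each rank-one factor at $\varphi=0$ acts as a scalar precisely when a certain Plancherel factor $\mu_\alpha(\sigma)$ vanishes, in which case $\alpha$ fixes $\sigma$ and the reflection $s_\alpha$ lies in $W_\sigma$. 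Set
\[
\Delta'_\sigma \;=\; \bigl\{\alpha \in \Delta(\mathfrak{g},\mathfrak{a}_P) : s_\alpha \in W_\sigma \text{ and } U_{s_\alpha,0} \text{ is scalar}\bigr\}.
\]

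The next step is to show that $\Delta'_\sigma$ is an abstract root system in the span it generates, and that $W'_\sigma$ coincides with the Weyl group of $\Delta'_\sigma$, generated by the reflections $s_\alpha$ for $\alpha \in \Delta'_\sigma$. One direction comes from the rank-one factorization: if $U_{w,0}$ is scalar, then every rank-one factor must be scalar, so $w$ is a product of reflections in roots from $\Delta'_\sigma$. The reverse direction is immediate from the definition. That $\Delta'_\sigma$ is closed under the reflections it generates follows from the cocycle identity \eqref{eq-cocycle-relation} applied to conjugates of the $s_\alpha$, combined with the observation that the scalar-valued property of $U_{w,0}$ is invariant under conjugation within $W_\sigma$.

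Once $W'_\sigma$ is realized as the Weyl group of the root system $\Delta'_\sigma$, the splitting is formal. Normality of $W'_\sigma$ in $W_\sigma$ implies that $W_\sigma$ permutes the reflections of $W'_\sigma$, hence acts on $\Delta'_\sigma$ by root-system automorphisms. Choose a positive system $(\Delta'_\sigma)^+$ and put
\[
R \;=\; \bigl\{w \in W_\sigma : w \cdot (\Delta'_\sigma)^+ = (\Delta'_\sigma)^+\bigr\}.
\]
Because the Weyl group $W'_\sigma$ acts simply transitively on the set of positive systems of $\Delta'_\sigma$, every coset $w W'_\sigma$ meets $R$ in exactly one element. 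Therefore $R \cap W'_\sigma = \{1\}$ and $W_\sigma = W'_\sigma \cdot R$, so the projection $W_\sigma \to R_\sigma$ restricts to an isomorphism $R \xrightarrow{\sim} R_\sigma$, giving the required section and the semi-direct product decomposition.

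The main obstacle is the analytic input of the first paragraph: identifying $W'_\sigma$ with a reflection group requires the rank-one reduction of Knapp–Stein intertwiners and the precise criterion that $U_{s_\alpha,0}$ is scalar if and only if the corresponding rank-one Plancherel factor vanishes. This depends on detailed properties of the normalized intertwining operators for $\mathrm{SL}(2,\R)$-type reductions and is where all of the substantial representation-theoretic work sits; the quotient-splitting argument in the last paragraph is then essentially elementary root-system combinatorics.
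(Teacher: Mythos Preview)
Your approach is correct and matches the paper's treatment: the paper does not prove this here but cites Knapp--Stein, and in Section~\ref{sec-knapp-stein-vogan-r-groups} (Theorem~\ref{th-W-prime-group} and the remarks after Definition~\ref{def-knapp-stein-r-group}) it records exactly your two ingredients---that $W'_\sigma$ is the Weyl group of a root system $\Delta'_\sigma$ permuted by $W_\sigma$, and that the splitting then follows from the simply transitive action of $W'_\sigma$ on the Weyl chambers. Your sketch of the analytic identification of $W'_\sigma$ with that reflection group is precisely the Knapp--Stein argument the paper defers to \cite{KS2}, and you are right to flag the rank-one reduction step as the place where the substantive work lies.
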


\begin{theorem}[{\cite[\S13 and \S15]{KS2} and \cite[\S 6]{Knapp_commut}}]
\label{th-R-group2-prelim}
The $R$-group  is abelian, and indeed a finite product of groups of order two.  Moreover there is a splitting, as in the  previous theorem, so that the associated intertwiners $U_{w,0}$ \textup{(}$w\in R_\sigma$\textup{)} also pairwise commute.
\end{theorem}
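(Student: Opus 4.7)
The plan is to identify $W_\sigma$ with a finite reflection group on $\mathfrak{a}_P^*$ and then to single out a distinguished set of generators for the $R$-group via a root-by-root analysis of rank-one Knapp-Stein intertwiners.

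First I would invoke the structural result that $W_\sigma$ acts as a finite reflection group on $\mathfrak{a}_P^*$, generated by reflections $s_\alpha$ in the restricted roots $\alpha$ of $(\mathfrak{g},\mathfrak{a}_P)$ whose reflection preserves the equivalence class of $\sigma$. I would then partition these roots into \emph{good} and \emph{bad} according to the behavior of the rank-one intertwiner $U_{s_\alpha,0}$ computed inside the real-rank-one subgroup $G_\alpha$ attached to $\alpha$: good means it acts by a scalar, bad means it does not. By the definition of $W'_\sigma$, the good reflections already lie in $W'_\sigma$, so $R_\sigma$ is carried by the images of the bad reflections.

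The main obstacle, and the heart of the argument, is to show that any two bad roots are mutually orthogonal. This calls for a rank-two reduction: if $\alpha$ and $\beta$ were bad and non-orthogonal, they would span a rank-two subsystem, and a detailed examination of Harish-Chandra's Plancherel density (equivalently, the relevant $\mu$-function) on the corresponding rank-two Levi subgroup would have to lead to a contradiction. I would handle this by reducing, case by case, to the classical real-rank-two groups, following the strategy of Knapp's commutativity paper \cite{Knapp_commut}. Once orthogonality is in hand, the bad reflections are commuting involutions in $W_\sigma$, so their images generate an elementary abelian $2$-group; combined with Theorem~\ref{th-R-group1}, this identifies $R_\sigma$ as a finite product of copies of $\Z/2$.

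For the commutativity of the intertwiners themselves, I would exploit the fact that each rank-one intertwiner $U_{s_\alpha,0}$ may be realized inside $G_\alpha$. When $\alpha$ and $\beta$ are orthogonal bad roots, $G_\alpha$ and $G_\beta$ commute up to shared compact factors, so the corresponding operators commute on $\Ind_P^G H_\sigma$. Choosing the unitary equivalences $\Ad_w^*\sigma\simeq \sigma$ simultaneously and compatibly along a set of representatives of $R_\sigma$ built from products of commuting bad reflections, one can force $c(w_1,w_2) = 1$ on $R_\sigma{\times} R_\sigma$ in \eqref{eq-cocycle-relation}, and thereby arrange the splitting of Theorem~\ref{th-R-group1} so that the intertwiners $U_{w,0}$ for $w\in R_\sigma$ pairwise commute.
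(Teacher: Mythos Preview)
The paper does not supply its own proof of this theorem: it is quoted from the literature, with the citations to \cite{KS2} and \cite{Knapp_commut} standing in for the argument. Your outline is broadly consonant with the approach taken in those references---the division of the relevant roots into ``good'' and ``bad'' according to the behavior of the rank-one Plancherel factor, the rank-two case analysis establishing orthogonality of the bad roots, and the realization of commuting intertwiners via commuting rank-one subgroups are all genuine ingredients of Knapp's proof.

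One caveat is worth recording. You begin by invoking, as a structural input, that $W_\sigma$ is itself a reflection group generated by the $s_\alpha$ with $s_\alpha\cdot[\sigma]=[\sigma]$. This is not an a priori fact about $W_\sigma$ (a subgroup of a Weyl group need not be generated by the reflections it contains); it is itself part of the Knapp--Stein package, established alongside the semidirect-product decomposition rather than before it. Similarly, your identification of the ``good'' reflections with $W'_\sigma$ requires linking the rank-one scalar condition on $U_{s_\alpha,0}$ inside $G_\alpha$ to the scalar condition on the full induced representation used in Definition~\ref{def-w-prime-group}; that link goes through the Plancherel $\mu$-function and is again part of the cited work rather than a definitional triviality. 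With those caveats, your sketch is a fair summary of what the references do.
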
   

Returning to the cocycle issue, fix a splitting as in   Theorem~\ref{th-R-group2-prelim}. Since $R_\sigma$ is a direct product of groups of order two, and since the associated Knapp-Stein intertwiners commute, we can  certainly choose equivalences $\Ad^*_w\sigma\simeq \sigma$ for $w\in R_\sigma$ so that $c(w_1,w_2)=1$ in \eqref{eq-cocycle-relation}, for all $w_1,w_2\in R_\sigma$.  If we now make further adjustments by scalars so that $U_{w,0}U_{w',0} = U_{ww',0}$ for all $w\in R_\sigma$ and all $w'\in W'_\sigma$, then we shall obtain $c(w_1,w_2) \equiv 1$ for all $w_1,w_2\in W_\sigma$, as required. We shall use these adjusted Knapp-Stein intertwining operators from now on.

The Knapp-Stein intertwining operators completely account for the decomposition of   principal series representations into irreducible representations, and for equivalences among these irreducible summands.  We refer to  \cite[\S 6]{CCH1} for a summary   that is adapted to our purposes; the same information will be encoded in  the description of the reduced $C^*$-algebra in Theorem~\ref{thm-CsrG} below.

\subsection*{The Reduced Group C*-Algebra}

Let $P=M_PA_PN_P$ be a   parabolic subgroup of $G$, and let  $\sigma$ be an irreducible square-integrable representation of $M_P$. The $(P,
\sigma)$-principal series representations are tempered, and they therefore determine representations $\pi_{\sigma,\varphi}$ of the reduced group $C^*$-algebra using  formula \eqref{eq-rep-of-the-group-algebra}.
We now introduce the $C^*$-algebra  
\begin{equation}
\label{def-component-c-star-algebra}
 C_0(\mathfrak{a}_P^*,\mathfrak{K}(\Ind_P^G H_\sigma))
\end{equation}
of norm-continuous functions,   vanishing in norm at infinity, from the locally compact space  $\mathfrak{a}_P^*$  to the $C^*$-algebra of compact operators on the Hilbert space  $\Ind_P^G H_\sigma$.  

\begin{proposition}[{\cite[Cor.~4.12]{CCH1}}]
\label{prop-component-of-cstar-of-g}
There is a \textup{(}unique\textup{)} $C^*$-algebra homomorphism 
\[
\pi_\sigma \colon C^*_r (G) 
\longrightarrow 
 C_0(\mathfrak{a}_P^*,\mathfrak{K}(\Ind_P^G H_\sigma))
 \]
 such that $ \pi_\sigma (f) (\varphi) = \pi_{\sigma,\varphi}(f)$
 for every $f\in C_c^\infty (G)$ and every $\varphi \in \mathfrak{a}^*_P$.
\end{proposition}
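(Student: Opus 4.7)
The plan is to first construct the map on $C_c^\infty(G)$ pointwise and verify three properties — compactness of values, norm continuity, and vanishing at infinity — and then to extend the map to all of $C^*_r(G)$ by continuity.

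Fix $f\in C_c^\infty(G)$. For each $\varphi\in \mathfrak{a}_P^*$, the formula \eqref{eq-rep-of-the-group-algebra} applied to the unitary representation $\pi_{\sigma,\varphi}$ produces a bounded operator $\pi_{\sigma,\varphi}(f)$ on $\Ind_P^G H_\sigma$ with $\|\pi_{\sigma,\varphi}(f)\|\le \|f\|_{L^1(G)}$. Realizing functions in $\Ind_P^G H_\sigma$ as sections over $K$ as in Definition~\ref{def-induced-representation-hilbert-space} and using the Iwasawa decomposition $G=KAN$, one can write $\pi_{\sigma,\varphi}(f)$ as an integral operator on $K$ whose kernel has the form
\[
K_f(k_1,k_2;\varphi) = \int_{A_P} e^{i\varphi(\log a)}\, F_f(k_1,k_2,a) \, da,
\]
where $F_f$ is smooth and compactly supported in all three variables; the compact support in $a$ comes from the compact support of $f$ via the Iwasawa projection. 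Two conclusions follow at once: the kernel is a smooth, compactly supported function of $(k_1,k_2)$ for each $\varphi$, so $\pi_{\sigma,\varphi}(f)$ is a compact (indeed Hilbert--Schmidt) operator; and the dependence on $\varphi$ is a partial Fourier transform of a fixed compactly supported smooth function, so by the Riemann--Lebesgue lemma combined with smoothness, the map $\varphi\mapsto K_f(\,\cdot\,,\,\cdot\,;\varphi)$ is continuous into $C^\infty(K\times K)$ and vanishes at infinity in any Schatten norm. In particular $\varphi\mapsto \pi_{\sigma,\varphi}(f)$ belongs to $C_0(\mathfrak{a}_P^*, \mathfrak{K}(\Ind_P^G H_\sigma))$.

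Denote this element by $\pi_\sigma(f)$. The assignment $f\mapsto \pi_\sigma(f)$ is clearly linear, and a direct computation using Fubini's theorem shows it is a $*$-homomorphism from $C_c^\infty(G)$ into $C_0(\mathfrak{a}_P^*, \mathfrak{K}(\Ind_P^G H_\sigma))$. Because the sup-norm of $\pi_\sigma(f)$ is the supremum over $\varphi$ of $\|\pi_{\sigma,\varphi}(f)\|$, and because each $\pi_{\sigma,\varphi}$ is a unitary representation of $G$, the norm is dominated by the full group $C^*$-norm of $f$; moreover, since the principal series $\pi_{\sigma,\varphi}$ are tempered, each $\|\pi_{\sigma,\varphi}(f)\|$ is dominated by $\|f\|_{C^*_r(G)}$. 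Hence $\pi_\sigma$ extends continuously to a $*$-homomorphism on all of $C^*_r(G)$.

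Uniqueness is immediate from the density of $C_c^\infty(G)$ in $C^*_r(G)$. The main technical point is the integral-kernel analysis in the first paragraph: one must be careful in unwinding the induced picture so that, after integrating over $G$, the $\varphi$-dependence sits cleanly inside a Fourier transform over $A_P$ of a compactly supported smooth function. Once that normal form is in hand, compactness, continuity, and vanishing at infinity all fall out together.
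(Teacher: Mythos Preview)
The paper does not prove this proposition; it simply quotes it from \cite[Cor.~4.12]{CCH1}, where the statement emerges from a Hilbert-$C^*$-module treatment of parabolic induction rather than from a direct kernel computation. So there is no proof in the present paper to compare against. Your integral-kernel approach is classical and the outline is sound, but there is a real gap as written.

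The issue is that $H_\sigma$ is in general infinite-dimensional (it carries a discrete series of the typically noncompact group $M_P$), so your kernel $K_f(k_1,k_2;\varphi)$ and your $F_f(k_1,k_2,a)$ are not scalar-valued but $\mathfrak{B}(H_\sigma)$-valued. A smooth $\mathfrak{B}(H_\sigma)$-valued kernel on the compact space $K\times K$ does \emph{not} by itself produce a compact, let alone Hilbert--Schmidt, operator on $\Ind_P^G H_\sigma$; think of the constant kernel $I_{H_\sigma}$. What is actually needed is that $F_f(k_1,k_2,a)$ takes values in the trace-class (or at least Hilbert--Schmidt) operators on $H_\sigma$, continuously in all three variables. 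This is true, but it is an extra step: after the $N_P$-integration, the remaining $M_P$-integral has the form $\sigma(\psi)$ for some $\psi\in C_c^\infty(M_P)$ depending smoothly on $(k_1,k_2,a)$, and because $\sigma$ is a discrete series (or simply because $M_P$ is liminal and $\sigma$ is irreducible) the operator $\sigma(\psi)$ is trace-class with norm controlled by seminorms of $\psi$. Once $F_f$ is known to be a compactly supported continuous map into the trace class, your Riemann--Lebesgue argument for the $C_0$-behaviour and your Hilbert--Schmidt conclusion both go through. Alternatively, compactness of $\pi_{\sigma,\varphi}(f)$ follows at once from admissibility of $\pi_{\sigma,\varphi}$ (or from $G$ being liminal together with finite length of $\pi_{\sigma,\varphi}$), but you still need the trace-class kernel estimate to get norm-continuity in $\varphi$ and vanishing at infinity.
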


\begin{remark}
 In \cite{CCH1}, the right-hand side is described in terms of functions on $\widehat{A}_P$, which identifies with $\mathfrak{a}^*_P$ through the exponential map as in Definition \ref{def-principal-series}.
\end{remark}

There is an action of the intertwining group $W_\sigma$   on the $C^*$-algebra \eqref{def-component-c-star-algebra} that is characterized by the formula
 \[
 w(f)(w(\varphi)) = U_{w, \varphi} f( \varphi) U_{w, \varphi}^* 
 \]
 for all $w\in W_\sigma$ and all $\varphi\in \mathfrak{a}_P^*$. It follows from the intertwining property \eqref{eq-knapp-stein-intertwining-property} that the image of the morphism $\pi_\sigma$ in Proposition~\ref{prop-component-of-cstar-of-g} is fixed pointwise by this action of $W_\sigma$.  The description of  the reduced $\Cs$-algebra given in \cite{CCH1} is as follows:
 
\begin{theorem}[{\cite[Thm. 6.8]{CCH1}}]
\label{thm-CsrG}
The morphisms in Proposition~\ref{prop-component-of-cstar-of-g} combine to give  an {isomorphism}  of $\Cs$-algebras
\begin{equation*}
\Csr(G)\overset{\cong}{\longrightarrow}\bigoplus _{[P,\sigma]}C_0\bigl ( \mathfrak{a}_P^*, \mathfrak{K}(\Ind_P^G H_\sigma  )\bigr ) ^{W_\sigma}.
\end{equation*}
The direct sum is the $C_0$-direct sum of $C^*$-algebras over a choice of representatives of the associate classes $[P,\sigma]$.
\end{theorem}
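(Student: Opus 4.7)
The plan is to define the combined morphism as the direct sum of the component maps $\pi_\sigma$ furnished by Proposition~\ref{prop-component-of-cstar-of-g}, choosing one representative $(P,\sigma)$ in each associate class, and then to establish that it is an isomorphism. First I would check that the image of each $\pi_\sigma$ lies inside the $W_\sigma$-invariant subalgebra. This is a direct consequence of the Knapp-Stein relation \eqref{eq-knapp-stein-intertwining-property}: it gives $U_{w,\varphi}\pi_{\sigma,\varphi}(f) U_{w,\varphi}^* = \pi_{\sigma, w(\varphi)}(f)$ for $f \in C^\infty_c(G)$, and this identity extends by continuity to all of $C^*_r(G)$. A small separate check using the associate-class definition shows that different representatives of $[P,\sigma]$ produce canonically isomorphic target algebras, so the direct sum over associate classes is well defined.

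For injectivity I would appeal to Harish-Chandra's Plancherel theorem. The reduced norm of $f \in C^\infty_c(G)$ is by definition the $L^2(G)$-operator norm of the convolution operator $\lambda(f)$, and the Plancherel decomposition realizes $\lambda$ as a direct integral
\[
\lambda \cong \bigoplus_{[P,\sigma]} \int^{\oplus}_{\mathfrak{a}_P^*} \pi_{\sigma,\varphi} \, d\mu_{P,\sigma}(\varphi)
\]
with $\mu_{P,\sigma}$ absolutely continuous on $\mathfrak{a}_P^*$ and of full support. It follows that the reduced norm equals the supremum of $\|\pi_{\sigma,\varphi}(f)\|$ over all $[P,\sigma]$ and all $\varphi$, which is precisely the $C^*$-norm of the image of $f$ on the right-hand side. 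Hence the combined morphism is an isometric embedding.

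Surjectivity is the substantive step and will be the main obstacle. I would argue in two parts. First, fix a single associate class and show that the image of $\pi_\sigma$ is dense in $C_0(\mathfrak{a}_P^*, \mathfrak{K}(\Ind_P^G H_\sigma))^{W_\sigma}$. The natural source of test functions is $K$-finite matrix coefficients of the principal series: restricting to pairs of $K$-types produces arbitrary rank-one operators on the relevant isotypic subspaces, while varying the input along $A_P$ amounts to abelian Fourier analysis on $\mathfrak{a}_P^*$, with Harish-Chandra's Schwartz-space estimates guaranteeing the required norm-continuity and decay at infinity. The sharpest difficulty is to confirm that these generators, after $W_\sigma$-averaging, exhaust the invariant $C^*$-algebra on closure; here one combines a Stone-Weierstrass argument on $\mathfrak{a}_P^*/W_\sigma$ with the irreducibility of the principal series at generic $\varphi$ (so that for such $\varphi$ the operators $\pi_{\sigma,\varphi}(f)$ cover all of $\mathfrak{K}(\Ind_P^G H_\sigma)$). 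Second, one must rule out cross-talk between distinct associate classes; this rests on Langlands-style disjointness of principal series with non-associate inducing data, together with an approximate-identity argument to promote the spectral decomposition into a genuine $C_0$-direct-sum decomposition of $C^*$-algebras.
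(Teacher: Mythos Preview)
The paper does not give its own proof of this theorem: it is quoted verbatim from \cite[Thm.~6.8]{CCH1}, and the surrounding text only remarks that the image of each $\pi_\sigma$ lands in the $W_\sigma$-fixed subalgebra (exactly your first paragraph) before invoking the cited result. So there is nothing in the present paper to compare your argument against beyond that one observation, which you have reproduced correctly.

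As a sketch of how the cited result is actually proved, your outline is broadly on target but uneven. Injectivity via Harish-Chandra's Plancherel formula is the standard route and is essentially what is done in \cite{CCH1}. For surjectivity, however, the argument there is not a Stone--Weierstrass/generic-irreducibility argument of the kind you describe; rather it goes through Harish-Chandra's wave-packet theory (equivalently, the Plancherel inversion formula for the Harish-Chandra Schwartz space), which produces explicit Schwartz-class preimages of prescribed operator-valued functions and thereby gives density directly. Your proposed approach is not wrong in spirit, but the step ``after $W_\sigma$-averaging, these generators exhaust the invariant $C^*$-algebra on closure'' conceals the real work: at the non-generic points where $\pi_{\sigma,\varphi}$ is reducible one must know that the commutant is \emph{exactly} the span of the Knapp--Stein operators (Theorem~\ref{thm-r-group-linear-independence} and its companions), or else Stone--Weierstrass will only yield a possibly proper subalgebra of the fixed-point algebra. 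Likewise the ``no cross-talk'' and $C_0$-direct-sum assertions require the Langlands disjointness theorem and uniform-admissibility estimates that you gesture at but do not pin down. None of this is fatal, but as written the surjectivity half is a plan rather than a proof.
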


\subsection*{The Principal Series as an  Equivariant  Bundle}
\label{subsec-equivariant-bundle}
In this section we shall show, following Wassermann \cite{NoteWassermann}, that the $W_\sigma$-action that is used to define the fixed-point algebra  Theorem~\ref{thm-CsrG} can be replaced by a much simpler action in a way that does not change those fixed-point algebra, up to $*$-isomorphism.

Form the trivial  bundle of Hilbert spaces with fiber $\Ind_P^G H_\sigma$ over the locally compact space  $\mathfrak{a}_P^*$ . The Knapp-Stein intertwiners determine an action on this bundle,
\begin{equation}
\label{eq-w-equivariant-bundle}
W_\sigma \times ( \mathfrak{a}_P^* \times \Ind_P^G H_\sigma ) \longrightarrow  ( \mathfrak{a}_P^* \times  \Ind_P^G H_\sigma ),
\end{equation}
  via the formula 
\begin{equation}
\label{eq-w-equivariant-bundle-formula1}
w \cdot (\varphi, v) = (w(\varphi), U_{w, \varphi} v).
\end{equation}
We shall now give a simpler description, up to isomorphism, of this $W_\sigma$-equi\-variant Hilbert space bundle.

\begin{remark}
The bundle $\mathfrak{a}_P^* {\times} \Ind_P^G H_\sigma$ is infinite-dimensional, but  it decomposes canonically as the orthogonal Hilbert direct sum of its  finite-dimensional $K$-isotypic components, which are finite-dimensional $W_\sigma$-equi\-variant bundles in their own right.  One could, if one preferred, work with these finite-dimensional bundles.
\end{remark}

Define a second $W_\sigma$-action on the   bundle  $\mathfrak{a}_P^*{\times} \Ind _P^G H_\sigma$ by the formula 
\begin{equation}
\label{eq-w-equivariant-bundle-formula2}
w \cdot (\varphi, v) = (w(\varphi), U_{w, 0} v).
\end{equation}

\begin{proposition}[\emph{c.f.} {\cite[Cor.~5]{NoteWassermann}}]
\label{prop-pullback-bundle}
The two $W_\sigma$-equivariant bundle structures on    $\mathfrak{a}_P^*{\times} \Ind _P^G H_\sigma$  defined by  the two actions \eqref{eq-w-equivariant-bundle-formula1} and \eqref{eq-w-equivariant-bundle-formula2} are unitarily equivalent.
\end{proposition}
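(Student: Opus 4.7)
The goal is to construct a strongly continuous family of unitary operators $\{V_\varphi\}_{\varphi\in\mathfrak{a}_P^*}$ on $H := \Ind_P^G H_\sigma$ satisfying the intertwining relation
\[
V_{w\varphi}\, U_{w,\varphi} \,=\, U_{w,0}\, V_\varphi \qquad (w \in W_\sigma,\ \varphi\in \mathfrak{a}_P^*),
\]
for then the bundle map $(\varphi,v)\mapsto(\varphi, V_\varphi v)$ will be the desired unitary equivariant equivalence. Since $0$ is $W_\sigma$-fixed and the two actions coincide at $\varphi=0$, we may normalize $V_0=I$.

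My plan is to construct $V_\varphi$ by a fundamental-domain argument, exploiting the semidirect product decomposition $W_\sigma = W_\sigma'\rtimes R_\sigma$ from Theorem~\ref{th-R-group1}. Choose a closed fundamental domain $F$ for the $W_\sigma$-action on $\mathfrak{a}_P^*$, taken to be a convex cone with apex at $0$. Declare $V_\varphi = I$ on the interior of $F$; the intertwining relation then forces $V_{w\varphi} = U_{w,0}\, U_{w,\varphi}^{-1}$ for $\varphi$ in the interior of $F$ and $w \in W_\sigma$. The trivial cocycle identity for the Knapp-Stein operators (arranged immediately before Theorem~\ref{thm-CsrG}) guarantees that this is unambiguous on the regular stratum, because any two representatives $w_1,w_2$ for the same orbit differ by an element fixing the chosen $\varphi$ pointwise.

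The crux is to check that the prescription extends continuously across walls of $F$. After unwinding the cocycle relations, this reduces to the compatibility
\[
U_{w,\varphi_0}\,=\,U_{w,0} \qquad \text{for}\ \varphi_0 \in \partial F\ \text{and}\ w\in\operatorname{Stab}_{W_\sigma}(\varphi_0).
\]
The stabilizer of such a $\varphi_0$ is generated by reflections through walls of $F$ containing $\varphi_0$; these lie in $W_\sigma'$, and the associated Knapp-Stein intertwiners factor through Harish-Chandra $c$-functions of the reflecting roots, which take the value $1$ on the reflection hyperplane (and equal $1$ at the origin by the normalization of Theorem~\ref{th-R-group2-prelim}). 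Any residual contribution from elements of $R_\sigma$ fixing $\varphi_0$ but not all of $\mathfrak{a}_P^*$ is handled similarly, using that $U_{r,\varphi}$ depends on $\varphi$ only through the projection onto the orthogonal complement of the fixed set of $r$. Strong continuity of $V_\varphi$ is then inherited from that of $\varphi \mapsto U_{w,\varphi}$.

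The principal obstacle is the verification of this wall-compatibility, which requires careful tracking of how the Knapp-Stein intertwiners degenerate along their singular hyperplanes and how the $R$-group cocycle normalization interacts with stabilizers at boundary points. A more streamlined route, which avoids these explicit computations, is a Moser-type argument: the rescaled actions $\rho^t(w)(\varphi, v) = (w\varphi,\, U_{w,t\varphi} v)$, which are bona fide $W_\sigma$-actions on $\mathfrak{a}_P^*{\times}H$ by linearity of the $W_\sigma$-action on $\mathfrak{a}_P^*$ combined with \eqref{eq-cocycle-relation}, interpolate continuously between \eqref{eq-w-equivariant-bundle-formula2} at $t=0$ and \eqref{eq-w-equivariant-bundle-formula1} at $t=1$; one then solves a first-order differential equation in $t$ for a family $V^t_\varphi$ of unitaries realizing the equivalence $\rho^0\simeq\rho^t$, with $V^0_\varphi = I$.
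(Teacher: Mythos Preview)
Your approach attempts an explicit, hands-on construction of the intertwining family $\{V_\varphi\}$, but the argument is not complete. The key unverified step is the wall-compatibility $U_{w,\varphi_0}=U_{w,0}$ for every $w$ in the stabilizer of a boundary point $\varphi_0$. For a simple reflection $s_\alpha\in W_\sigma'$ this is plausible, since the rank-one intertwiner depends on $\varphi$ only through $\langle\varphi,\alpha^\vee\rangle$; but for general stabilizer elements---particularly those with a nontrivial $R_\sigma$ component---your assertion that $U_{r,\varphi}$ depends only on the projection orthogonal to the fixed set of $r$ is not established anywhere in the paper and would require substantial additional input from Knapp--Stein theory. You yourself flag this as ``the principal obstacle,'' and the Moser-type alternative you sketch is not a genuine bypass: converting a homotopy of $W_\sigma$-actions into a conjugacy is precisely the rigidity statement for equivariant bundles that one is trying to prove, and the ODE you allude to cannot be set up or solved without it (or without smoothness properties of $\varphi\mapsto U_{w,\varphi}$ that the paper never invokes).

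The paper's proof avoids all of this with a two-line soft argument. Since $\{0\}\subseteq\mathfrak{a}_P^*$ is a $W_\sigma$-equivariant deformation retract (via scaling $\varphi\mapsto t\varphi$), elementary equivariant vector bundle theory implies that any two $W_\sigma$-equivariant bundles over $\mathfrak{a}_P^*$ are isomorphic as soon as their fibers over $0$ agree as $W_\sigma$-representations---and here both actions \eqref{eq-w-equivariant-bundle-formula1} and \eqref{eq-w-equivariant-bundle-formula2} restrict to the same representation $w\mapsto U_{w,0}$ at $\varphi=0$. The infinite-dimensionality of the fiber is handled (see the remark preceding the proposition) by decomposing into finite-dimensional $K$-isotypic components, each of which is a $W_\sigma$-equivariant subbundle in its own right. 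No information about how Knapp--Stein intertwiners behave along singular strata is needed.
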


\begin{proof}
The single-point subset   $\{0\}\subseteq \mathfrak{a}_P^* $  is a $W_\sigma$-equivariant deformation retract.  
It therefore follows from elementary vector bundle theory  that any two $W_\sigma$-equivariant bundles over $\mathfrak{a}_P^* $ whose fibers over  $0$  are unitarily equivalent as representations of $W_\sigma$ are in fact unitarily equivariantly isomorphic as bundles.  
\end{proof}

\begin{corollary}
\label{cor-isomorphic-fixed-point-algebras}
The $W_\sigma$-actions on  the $C^*$-algebra $C_0(\mathfrak{a}_P^*  , \Compact( \Ind _P^G H_\sigma ) )$  defined by the formulas \eqref{eq-w-equivariant-bundle-formula1} and \eqref{eq-w-equivariant-bundle-formula2} are conjugate by a $C^*$-algebra automorphism. In particular, the corresponding fixed-point $C^*$-subalgebras are isomorphic. \qed
\end{corollary}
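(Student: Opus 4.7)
The plan is to deduce the corollary directly from Proposition~\ref{prop-pullback-bundle} by passing from an equivariant bundle isomorphism to the induced automorphism of the algebra of sections of compact fiber-wise endomorphisms. Since the $C^*$-algebra $C_0(\mathfrak{a}_P^*, \Compact(\Ind_P^G H_\sigma))$ is canonically the algebra of norm-continuous, $C_0$ sections of the bundle of compact operators on the trivial Hilbert bundle $\mathfrak{a}_P^*{\times}\Ind_P^G H_\sigma$, any unitary isomorphism of Hilbert bundles over $\mathfrak{a}_P^*$ induces a $*$-automorphism of this algebra by fiberwise conjugation, and equivariance of the isomorphism translates into conjugacy of the two induced $W_\sigma$-actions.

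In detail, the proposition provides a norm-continuous family of unitaries $\Phi_\varphi \colon \Ind_P^G H_\sigma \to \Ind_P^G H_\sigma$, indexed by $\varphi \in \mathfrak{a}_P^*$, satisfying the equivariance relation
\[
\Phi_{w(\varphi)} U_{w,\varphi} = U_{w,0} \Phi_\varphi \qquad (\varphi \in \mathfrak{a}_P^*,\ w \in W_\sigma).
\]
I would then define the automorphism $\alpha$ of $C_0(\mathfrak{a}_P^*, \Compact(\Ind_P^G H_\sigma))$ by
\[
(\alpha(f))(\varphi) = \Phi_\varphi\, f(\varphi)\, \Phi_\varphi^* ,
\]
note that $\alpha$ is a $*$-automorphism (norm-continuity and vanishing at infinity are preserved since the $\Phi_\varphi$ are unitaries), and verify that it intertwines the two $W_\sigma$-actions. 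Denoting them by $\cdot_1$ and $\cdot_2$ and unwinding the definitions, the identity $\alpha(w \cdot_1 f) = w \cdot_2 \alpha(f)$ reduces, after substituting $\psi = w^{-1}(\varphi)$, to precisely the displayed equivariance relation above.

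Finally, since $\alpha$ conjugates the two actions, it restricts to a $*$-isomorphism between the corresponding fixed-point subalgebras, which yields the last sentence of the corollary. The whole argument is formal once one has the equivariant bundle isomorphism; the substantive content sits in Proposition~\ref{prop-pullback-bundle}, so there is no real obstacle to overcome here beyond being careful with how the $W_\sigma$-action transforms the base point $\varphi$ versus the fiber vector.
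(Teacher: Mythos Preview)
Your argument is correct and is exactly the routine passage the paper has in mind: the corollary is marked with a \qed\ immediately after its statement, so the paper treats it as an immediate consequence of Proposition~\ref{prop-pullback-bundle}, and what you have written is precisely the standard way to spell out that consequence (conjugation by the fiberwise unitary from the equivariant bundle isomorphism).
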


\subsection*{The Reduced C*-Algebra up to Morita Equivalence}
\label{subsec-Morita}

In this section we shall construct a Morita equivalence between each summand in the decomposition of  Theorem~\ref{thm-CsrG} and a still more elementary   $\Cs$-algebra.  We shall continue to  follow  Wassermann \cite{NoteWassermann} closely.  

A complete treatment of the Morita equivalence has  appeared very recently in \cite{AfgoustidisAubert22}, where many situations involving $p$-adic groups are also considered.   For the sake of completeness we shall nonetheless present the argument below, although in view of the existence of \cite{AfgoustidisAubert22} we shall take the liberty of omitting some details.

We shall use Corollary~\ref{cor-isomorphic-fixed-point-algebras}, and throughout this subsection we shall work with the action of $W_\sigma$ on the $C^*$-algebra 	$C_0(\mathfrak{a}_P^*   , \Compact( \Ind _P^G H_\sigma ) )$ that is derived from \eqref{eq-w-equivariant-bundle-formula2}.
We shall determine the fixed-point $C^*$-subalgebra up to Morita equivalence.

The group $R_\sigma$ acts on the $C^*$-algebra $C_0\bigl (\mathfrak{a}_P^*/W'_\sigma, \mathfrak{K}(\Ind_P^G H_\sigma)\bigr )$ via the formula 
\[
(r \cdot f )([\varphi]) = U_{w,0}f([w^{-1}(\varphi)]) U_{w,0}^*,
\]
where $w$ is any preimage in $W_\sigma$ of  $r\in R_\sigma$.  The morphism 
\begin{equation}
    \label{eq-first-cstar-isomorphism-using-r-group}
	C_0\bigl(\mathfrak{a}_P^*   , \Compact( \Ind _P^G H_\sigma )\bigr ) ^{W_\sigma}
	\stackrel\cong \longrightarrow 
	C_0\bigl (\mathfrak{a}_P^*/W'_\sigma, \mathfrak{K}(\Ind_P^G H_\sigma)\bigr ) ^{R_\sigma}
\end{equation}
defined by the formula 
\[
f \longmapsto \Bigl [ [\varphi]\mapsto f (\varphi)\Bigr ]
\]
is an isomorphism of $C^*$-algebras. We shall therefore concentrate on the $R_\sigma$-fixed point $C^*$-algebra.

Now form the space $\Compact \bigl (\Ind_P^G H_\sigma,\ell^2R_\sigma
\bigr )$ of compact Hilbert space operators from $\Ind_P^G H_\sigma$ into the finite-dimensional Hilbert space $\ell ^2 R_\sigma$. Use the action of $R_\sigma$ on $\Ind_P^G H$, along with the left-translation  action of $R_\sigma$ on $\ell^2 R_\sigma$,
to define an $R_\sigma$-action on $\Compact \bigl (\Ind_P^G H_\sigma,\ell^2R_\sigma
\bigr )$.

Finally, form the Banach  space
\begin{equation}
\label{eq-morita-equivalence-bimodule}
C_0\bigl (\mathfrak{a}_P^*/W_\sigma' ,\Compact  (\Ind_P^G H_\sigma,\ell^2R_\sigma
 )\bigr )^{R_\sigma} .
\end{equation}
It carries commuting actions of  the $C^*$-algebra   $C_0 (\mathfrak{a}^*_{P}/W'_\sigma, \Compact (\Ind_P^G H_\sigma ))^{R_\sigma}$ on the right, by pointwise composition, and of    $C_0 (\mathfrak{a}^*_{P}/W'_\sigma,\Compact (\ell^2R_\sigma ))^{R_\sigma}$ on the left.  

\begin{theorem}[\emph{c.f.} {\cite[Cor.~7]{NoteWassermann}}]
\label{th-main-Morita}
For each associate class $[P,\sigma]$, the bimodule 
\[
C_0\bigl(\mathfrak{a}^*_{P}/W'_\sigma ,\Compact(\Ind_P^G H_\sigma,\ell^2R_\sigma)\bigr)^{R_\sigma}
\] 
implements a strong Morita equivalence  
\[
C_0\bigl (\mathfrak{a}^*_{P}/W'_\sigma,\Compact (\ell^2R_\sigma )\bigr)^{R_\sigma}
	\underset{\text{\rm  Morita}}\simeq 
 C_0\bigl (\mathfrak{a}^*_{P}/W'_\sigma, \Compact (\Ind_P^G H_\sigma )\bigr)^{R_\sigma}.
\]
\end{theorem}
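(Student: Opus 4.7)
My plan is to equip
\[
\mathcal{E}=C_0\bigl(\mathfrak{a}^*_P/W'_\sigma,\,\Compact(\Ind_P^G H_\sigma,\ell^2 R_\sigma)\bigr)^{R_\sigma}
\]
with the right inner product $(T_1,T_2)\mapsto T_1^* T_2$ and the left inner product $(T_1,T_2)\mapsto T_1 T_2^*$, and then to verify the axioms of a Morita equivalence bimodule. The algebraic axioms — bilinearity, positivity, associativity, and $R_\sigma$-equivariance of the inner products — are formal consequences of the fact that the diagonal $R_\sigma$-action is implemented by unitaries commuting with composition and adjunction; in particular inner products of $R_\sigma$-equivariant sections do land in the respective fixed-point subalgebras. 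The content lies entirely in \emph{fullness}, i.e.\ density of the ranges of the two inner products in the two coefficient algebras.

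For fullness I would pass to the fiber over a point of $X := \mathfrak{a}^*_P/W'_\sigma$. All three algebras in sight are $C_0(X/R_\sigma)$-algebras (or modules over such), and a closed $*$-ideal in a $C_0(Y)$-algebra that surjects onto every fiber is the entire algebra; equivariant bump functions realize any fiberwise invariant element as the value of a global equivariant section of $\mathcal{E}$. The fiberwise statement at $[\varphi]$, with $R_\varphi\subseteq R_\sigma$ the stabilizer, is an instance of the following finite-group lemma: if $R$ acts unitarily on Hilbert spaces $H_1$ and $H_2$ and each of $H_1,H_2$ contains every irreducible representation of $R$, then $\Compact(H_1,H_2)^R$ implements a Morita equivalence between $\Compact(H_1)^R$ and $\Compact(H_2)^R$. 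One proves this by decomposing both $H_1$ and $H_2$ into $R$-isotypes and invoking, on each isotype, the classical fact that $\Compact(V_1,V_2)$ is a Morita equivalence between $\Compact(V_1)$ and $\Compact(V_2)$ whenever both $V_i$ are nonzero.

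The key representation-theoretic input is the non-vanishing of every $R_\varphi$-isotype in both $\Ind_P^G H_\sigma$ and $\ell^2 R_\sigma$. For $\ell^2 R_\sigma$ this is immediate, because the restriction of the left-regular representation of $R_\sigma$ to any subgroup is a direct sum of copies of the left-regular representation of that subgroup. For $\Ind_P^G H_\sigma$ it uses Knapp-Stein: the pairwise commuting self-intertwiners $U_{w,0}$, $w\in R_\sigma$, also commute with the $G$-action, so $\Ind_P^G H_\sigma$ is an $R_\sigma\times G$-representation whose irreducible summands are indexed by the characters $\widehat{R_\sigma}$, each character carrying an infinite-dimensional irreducible $G$-constituent. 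Every character of $R_\sigma$ therefore appears in $\Ind_P^G H_\sigma$ with infinite multiplicity, and since $R_\sigma$ is a product of groups of order two the restriction map $\widehat{R_\sigma}\to\widehat{R_\varphi}$ is surjective, so every irreducible of $R_\varphi$ appears as well.

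The main technical obstacle is the globalization step, because the stabilizer $R_\varphi$ and the relevant $R_\varphi$-isotypic multiplicities in $\Ind_P^G H_\sigma$ jump only upper-semi-continuously with $[\varphi]$. One must assemble local cross-sections of $\mathcal{E}$ realizing the fiberwise matrix units over small tubes about each $R_\sigma$-orbit, and combine them by an $R_\sigma$-equivariant partition of unity on $X$ to conclude that the ranges of both inner products exhaust dense subalgebras. This partition-of-unity bookkeeping has been written out in complete detail for the entirely analogous $p$-adic setting in \cite{AfgoustidisAubert22}, and the argument there transfers verbatim to the present situation, justifying the omission of further details.
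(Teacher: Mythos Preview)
Your proposal is correct and follows essentially the same strategy as the paper: reduce fullness of the bimodule to the fiberwise condition that every irreducible of the stabilizer $R_\varphi\subseteq R_\sigma$ occurs in both $\Ind_P^G H_\sigma$ and $\ell^2 R_\sigma$, and then verify this condition from the Knapp--Stein theory. Two points of comparison are worth noting.

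First, for the globalization/density step the paper avoids the partition-of-unity bookkeeping by arguing contrapositively: a non-dense ideal is annihilated by some irreducible representation; every irreducible representation of $C_0(X,\Compact(H_i))^{R_\sigma}$ factors through evaluation at a point $x$ and compression to an $R_x$-isotype; and one then only needs a single equivariant section of $\mathcal{E}$ with a prescribed value at $x$ (a one-point bump function), not a full patching argument. This is functionally the same as your fiberwise-surjectivity criterion, but lighter to write.

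Second, for the representation-theoretic input you invoke the full $R$-group decomposition of $\Ind_P^G H_\sigma$ (each character of $\widehat{R_\sigma}$ carries a nonzero $G$-constituent). The paper uses the more elementary fact (Theorem~\ref{thm-r-group-linear-independence}) that the intertwiners $U_{w,0}$, $w\in R_\sigma$, are linearly independent: this already forces the group-algebra representation $\C[R_\sigma]\to\End(\Ind_P^G H_\sigma)$ to be faithful, so every isotypical projection is nonzero. Your argument is valid, but uses strictly more than is needed.
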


Let us quickly recall  the $C^*$-algebraic  concept of Morita equivalence, which includes analytic requirements that are obviously absent from the purely algebraic theory (among other things, they help extending the reach of the theory to non-unital $C^*$-algebras).  A succinct formulation is as follows: an equivalence $A$-$B$-bimodule must have the form $p Cp^\perp$, where
 \begin{enumerate}[\rm (i)]
     \item $C$ is a $C^*$-algebra and $p$ is a projection in the multiplier algebra  of $C$ \cite[\S 3.12]{Pedersen79};
     \item $C^*$-algebra isomorphisms  are  provided between $pCp$ and $A$, and between $p^\perp C p^\perp$ and $B$ (where $p^\perp = 1{-}p$); and 
     \item $pCp^\perp C p$ and $p^\perp C p C p^\perp$ are dense in $pCp$ and $p^\perp C p^\perp$, respectively.
     
 \end{enumerate}
See \cite{RaeWillMorita}. In the present case,  $C$ will be   the $C^*$-algebra of $R_\sigma$-fixed functions of class $C_0$ from $\mathfrak{a}_P^* / W'_\sigma$ to the $C^*$-algebra of compact operators on the direct sum Hilbert space $\Ind_P^G H_\sigma \oplus \ell ^2 R_\sigma$, and $p$ will be the projection onto the second Hilbert space direct summand, and then  $pCp^\perp$ will be \eqref{eq-morita-equivalence-bimodule}.

\begin{lemma}
\label{lem-Morita-general}
 Let $\Gamma$ be a finite  group acting properly on a locally compact Hausdorff  space $X$,  and let $H_1$ and $H_2$ be Hilbert spaces equipped with unitary representations of $\Gamma$. If for every $x\in X$,  $H_{1}$ and $H_{2}$ are weakly equivalent representations  of the stabilizer subgroup  $\Gamma_x$ \textup{(}that is, each is contained in a multiple of the other\textup{)}, then the bimodule \[C_0\bigl ( X,\Compact(H_2,H_1)\bigr )^\Gamma\] implements a   Morita equivalence of $C^*$-algebras
\[
C_0\bigl (X,\Compact (H_1)\bigr )^\Gamma\underset{\text{\rm  Morita}}\simeq 
C_0\bigl(X,\Compact (H_2)\bigr )^\Gamma.
\]
\end{lemma}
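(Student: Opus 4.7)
The plan is to apply the $C^*$-algebraic characterization of Morita equivalence recalled just before the lemma. Set $H := H_1 \oplus H_2$ with the diagonal $\Gamma$-action, and consider the linking algebra
\[
C := C_0\bigl(X, \Compact(H)\bigr)^\Gamma.
\]
The constant function whose value is the orthogonal projection onto $H_1$ is $\Gamma$-invariant and defines a projection $p$ in the multiplier algebra $M(C)$, with $p^\perp = 1 - p$ the corresponding projection onto $H_2$. Using the standard block decomposition of $\Compact(H_1 \oplus H_2)$ one reads off canonical identifications
\[
pCp \cong C_0(X,\Compact(H_1))^\Gamma,\quad
p^\perp C p^\perp \cong C_0(X,\Compact(H_2))^\Gamma,\quad
pCp^\perp \cong C_0(X,\Compact(H_2,H_1))^\Gamma,
\]
which already match the conclusion. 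What remains is the density condition: $pCp^\perp C p$ is dense in $pCp$, and symmetrically.

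The main step is to reduce this global density to a fiberwise statement via the slice theorem for proper actions of a finite group. At each $x \in X$ one can choose a $\Gamma_x$-invariant precompact open neighborhood $U$ of $x$ such that $\gamma U \cap U = \emptyset$ for $\gamma \in \Gamma \setminus \Gamma_x$; restriction to $U$ then gives an isomorphism
\[
C_0(\Gamma \cdot U, \Compact(H_i))^\Gamma \;\cong\; C_0(U,\Compact(H_i))^{\Gamma_x},
\]
and similarly with $\Compact(H_j,H_i)$ on both sides. A $\Gamma$-invariant partition of unity subordinate to a cover by such tubes, together with a standard approximation by $\Gamma$-invariant combinations of sections supported in single tubes, reduces the density condition to the following fiberwise claim: for each $x$, the products $\Compact(H_2,H_1)^{\Gamma_x} \cdot \Compact(H_1,H_2)^{\Gamma_x}$ and $\Compact(H_1,H_2)^{\Gamma_x} \cdot \Compact(H_2,H_1)^{\Gamma_x}$ are dense in $\Compact(H_1)^{\Gamma_x}$ and $\Compact(H_2)^{\Gamma_x}$, respectively.

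The fiberwise claim I would prove by decomposing into $\Gamma_x$-isotypic components. Writing $H_i = \bigoplus_\pi V_\pi \otimes M_{i,\pi}$ for $\pi \in \widehat{\Gamma_x}$, where $V_\pi$ is the irreducible carrier and $M_{i,\pi}$ the multiplicity space, Schur's lemma gives
\[
\Compact(H_i)^{\Gamma_x} \;\cong\; \bigoplus_\pi \Compact(M_{i,\pi}),\qquad
\Compact(H_j,H_i)^{\Gamma_x} \;\cong\; \bigoplus_\pi \Compact(M_{j,\pi},M_{i,\pi}).
\]
The weak-equivalence hypothesis is precisely the statement that $M_{1,\pi} \neq 0$ if and only if $M_{2,\pi} \neq 0$. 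On each such $\pi$, the space $\Compact(M_{2,\pi},M_{1,\pi})$ is the standard imprimitivity bimodule between $\Compact(M_{1,\pi})$ and $\Compact(M_{2,\pi})$, so the required density holds summand by summand; on the remaining $\pi$ both sides vanish.

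The main obstacle is the global bookkeeping in the second paragraph: one has to show that the slice-theorem reduction preserves both the $\Gamma$-invariance and the algebraic products $pCp^\perp \cdot p^\perp C p$. I expect this to be routine but finicky, and one could instead shorten it by quoting the general equivariant Morita-equivalence framework (as in \cite{RaeWillMorita} and the bundle-theoretic arguments of the preceding Proposition~\ref{prop-pullback-bundle}), which packages exactly these local-to-global steps.
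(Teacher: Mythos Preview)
Your approach is correct, but takes a more hands-on route than the paper. Both arguments share the same setup---the linking algebra $C = C_0(X,\Compact(H_1\oplus H_2))^\Gamma$ with the projection $p$ onto $H_1$---and both ultimately rest on the same fiberwise computation via the $\Gamma_x$-isotypic decomposition. The difference is in how the global density condition is reduced to that fiberwise statement.

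You attempt the reduction constructively, via the slice theorem and a $\Gamma$-invariant partition of unity. This can be made to work, but as you yourself flag, the bookkeeping is delicate: within a tube $U$ the $\Gamma_x$-action on the base need not be trivial, so $C_0(U,\Compact(H_i))^{\Gamma_x}$ is \emph{not} simply $C_0(U,\Compact(H_i)^{\Gamma_x})$, and your ``fiberwise claim'' as stated does not immediately settle the density on the tube. One would have to iterate the localization or argue more carefully.

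The paper bypasses all of this by invoking the ideal--representation correspondence. The closed span of $\{fg^*: f,g\in E\}$ is an ideal in $A = C_0(X,\Compact(H_1))^\Gamma$; if it were proper, some irreducible representation of $A$ would annihilate it. Since $C_0(X)^\Gamma$ lies in the center of $A$, Schur's lemma forces any irreducible representation to factor through evaluation at some orbit $[x]$, where the fiber algebra is $\Compact(H_1)^{\Gamma_x}$ and its irreducibles are the nonzero $\Gamma_x$-isotypical summands $H_1^\rho$. The weak-equivalence hypothesis then guarantees $H_2^\rho\ne 0$, so one can write down a single $\Gamma_x$-equivariant $T\colon H_2\to H_1$ with range in $H_1^\rho$, extend it to an $f\in E$, and observe that $ff^*$ is nonzero in that irreducible. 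This is shorter and avoids any local-to-global patching; your approach trades that elegance for a more explicit construction.
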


\begin{proof}
Denote the two $C^*$-algebras in the statement of the proposition by $A$ and $B$, and the $A$-$B$-bimodule by $E$. We need to show that the sets
\[
\{\, f  g^* : f,g\in E\, \}  
\quad\text{and} \quad 
\{\, f^* g : f,g\in E\, \}    
\]
span dense ideals in $A$ and $B$, respectively.

If an ideal in a $C^*$-algebra is \emph{not} dense, then there is an irreducible representation of the $C^*$-algebra that vanishes on the ideal \cite[Ch.~2]{DixmierEnglish}.   So to prove density in $A$  we need only show that for every irreducible representation  of  $A$ there is some element  $  f{\in} {E}$ such that the representation is nonzero on  $ff^*\in A$.

Each irreducible representation of $A$ must factor through evaluation of functions $A$ in  at some point $x{\in} X$, since by Schur's lemma  all functions in $C_0(X)^\Gamma$ must act on the representation space as scalar multiples of the identity.
But the image of $A$ under evaluation at $x{\in} X$  is $\Compact (H_{1})^{\Gamma_x}$, and its irreducible representations   are precisely the nonzero  $\Gamma_x$-isotypical subspaces $H_1^\rho$ of $H_1$ ($\rho \in \widehat \Gamma_x$).

By hypothesis, the  isotypical subspace $H_{2}^\rho$   is nonzero, and hence there is a   nonzero $\Gamma_x$-equivariant compact operator $T\colon H_{2}\to H_{1}$ whose range lies in $H^\rho_{1}$, and there is a function  $f\in E$ whose value at $x$ is $T$. But now the value of  $ff^*\in A$ at $x$ is equal to $TT^*$, which is nonzero.
\end{proof}

We shall need to combine the simple computation above with the following more substantial result from the Knapp-Stein theory:
 
\begin{theorem}[{\cite[Theorem~14.43]{Knapp1}}]
\label{thm-r-group-linear-independence}
The intertwining operators 
\[
U_{w,0}\colon \Ind_P^G H_\sigma \longrightarrow \Ind_P^G H_\sigma  \qquad ( w \in R_\sigma) 
\]
 are linearly independent of one another. 
\end{theorem}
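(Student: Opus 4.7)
The strategy is to reduce the linear independence assertion to the joint spectral decomposition of $\Ind_P^G H_\sigma$ under the commuting family $\{U_{w,0}\}_{w\in R_\sigma}$, and then to show that every character of $R_\sigma$ actually occurs as a joint eigenvalue.

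By Theorem~\ref{th-R-group2-prelim} together with the cocycle normalisations recalled just before Proposition~\ref{prop-component-of-cstar-of-g}, the map $w\mapsto U_{w,0}$ is a well-defined unitary representation of the finite abelian group $R_\sigma$ on $\Ind_P^G H_\sigma$, and the intertwining identity \eqref{eq-knapp-stein-intertwining-property} at $\varphi=0$ places its image inside the commutant $\End_G(\Ind_P^G H_\sigma)$. Spectral theory for finite abelian groups then supplies a $G$-invariant orthogonal Hilbert decomposition
\[
\Ind_P^G H_\sigma \,=\, \bigoplus_{\chi \in \widehat{R_\sigma}} V_\chi,
\qquad
V_\chi \,=\, \bigl\{\, v \, : \, U_{w,0}v=\chi(w)v \;\, \forall w\in R_\sigma\,\bigr\}.
\]
A hypothetical relation $\sum_{w\in R_\sigma} c_w U_{w,0}=0$ would restrict on each nonzero $V_\chi$ to the scalar identity $\sum_w c_w\chi(w)=0$, and because the character table of $R_\sigma$ is an invertible matrix, the linear independence statement is \emph{equivalent} to the assertion that $V_\chi\neq 0$ for every $\chi\in\widehat{R_\sigma}$.

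The main obstacle, and the only serious step, is this nonvanishing assertion. The tool I would invoke is Harish-Chandra's completeness theorem for cuspidal principal series, which asserts that the commutant $\End_G(\Ind_P^G H_\sigma)$ is linearly spanned by the full family $\{U_{w,0}\}_{w\in W_\sigma}$; combined with the normalisation $U_{w',0}=I$ for $w'\in W'_\sigma$, this identifies $\End_G(\Ind_P^G H_\sigma)$ with the commutative subalgebra $A$ of $\End(\Ind_P^G H_\sigma)$ generated by $\{U_{w,0}\}_{w\in R_\sigma}$. A direct Fourier computation on $R_\sigma$ then shows that $\dim A$ equals the number of nonzero $V_\chi$ in the above decomposition, so one only needs to produce the matching lower bound $\dim \End_G(\Ind_P^G H_\sigma)\ge |R_\sigma|$. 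This lower bound is supplied by Harish-Chandra's Plancherel-type multiplicity count for cuspidal principal series, carried out in \cite[Ch.~14]{Knapp1}; it forces every $V_\chi$ to be nonzero and thus completes the proof of linear independence.
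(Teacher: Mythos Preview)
The paper does not prove this theorem; it is simply quoted from \cite[Theorem~14.43]{Knapp1}, so there is no in-paper argument to compare against.

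Your reduction is correct and is in fact the standard way the result is packaged: with the cocycle trivialised, $w\mapsto U_{w,0}$ is a genuine unitary representation of the finite abelian group $R_\sigma$, and linear independence of the $U_{w,0}$ is equivalent to every character $\chi\in\widehat{R_\sigma}$ occurring in this representation.  Harish-Chandra's completeness theorem together with the normalisation $U_{w',0}=I$ for $w'\in W'_\sigma$ identifies $\End_G(\Ind_P^G H_\sigma)$ with the image $A$ of $\C[R_\sigma]$, so that $\dim\End_G(\Ind_P^G H_\sigma)$ equals the number of characters that occur.  (Note, incidentally, that this is exactly the implication the paper records as Corollary~\ref{lem-regular-repn}, so you have correctly observed that the corollary and the theorem are equivalent once the commutant is known to be the span of the $U_{w,0}$.)

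The entire weight of the theorem, however, lies in the inequality $\dim\End_G(\Ind_P^G H_\sigma)\ge |R_\sigma|$, which you dispose of by citing a ``Plancherel-type multiplicity count \ldots\ in \cite[Ch.~14]{Knapp1}''.  That is the same chapter the paper cites for the theorem itself, and in the Knapp--Stein development the multiplicity statement and the linear-independence statement are two formulations of the same result, not independent inputs.  So what you have written is a correct reformulation and reduction, but not an independent proof: the difficulty has been relocated to exactly the external result the paper already invokes.
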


\begin{corollary} 
\label{lem-regular-repn}
The representation of $R_\sigma$ on $\Ind_P^G H_\sigma$  includes a copy of every irreducible representation of $R_\sigma$.
\end{corollary}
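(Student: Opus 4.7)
The plan is to use the standard character-projection argument combined with the linear independence theorem just cited. Recall from Theorem~\ref{th-R-group2-prelim} that $R_\sigma$ is a finite abelian group (indeed a product of copies of $\Z/2\Z$), so its irreducible unitary representations are all one-dimensional and are parametrized by the character group $\widehat{R_\sigma}$, with $|\widehat{R_\sigma}| = |R_\sigma|$. The representation of $R_\sigma$ on $\Ind_P^G H_\sigma$ via the intertwiners $U_{w,0}$ decomposes as a Hilbert space direct sum of isotypic subspaces $H_\sigma^\chi$ indexed by $\chi\in \widehat{R_\sigma}$, and the statement to prove is that each $H_\sigma^\chi$ is nonzero.

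The orthogonal projection onto $H_\sigma^\chi$ is the operator
\[
p_\chi \;=\; \frac{1}{|R_\sigma|}\sum_{w\in R_\sigma} \overline{\chi(w)}\, U_{w,0}
\]
on $\Ind_P^G H_\sigma$. The key observation is that the coefficients $\overline{\chi(w)}$ are all nonzero (they are roots of unity), so $p_\chi$ is a nontrivial linear combination of the operators $\{U_{w,0} : w\in R_\sigma\}$. By Theorem~\ref{thm-r-group-linear-independence} these operators are linearly independent, so $p_\chi \ne 0$, which means $H_\sigma^\chi \ne 0$.

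This is essentially the whole argument; there is no serious obstacle once Theorem~\ref{thm-r-group-linear-independence} is in hand. The only thing to be a little careful about is that the operators $U_{w,0}$ genuinely define a representation of $R_\sigma$ (and not merely a projective one), which has been arranged by the cocycle adjustments described in the paragraph immediately following Theorem~\ref{th-R-group2-prelim}; without those adjustments the character projection formula would not literally make sense.
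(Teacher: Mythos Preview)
Your proof is correct and is essentially the same as the paper's: the paper phrases it as the group algebra representation being faithful by Theorem~\ref{thm-r-group-linear-independence}, hence each isotypical projection is nonzero, while you write out the character-projection formula explicitly and invoke linear independence directly.
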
 

\begin{proof} The representation of $R_\sigma$ on $\Ind_P^G H_\sigma$ determines a representation of the complex group algebra of $R_\sigma$, and  Theorem \ref{thm-r-group-linear-independence} implies that this algebra representation   is faithful. That is, every element of the group algebra acts as a nonzero operator. So each of the isotypical projections associated to the irreducible representations of $R_\sigma$ acts as a nonzero operator on $\Ind_P^G H_\sigma$.  
\end{proof}

\begin{proof}[Proof of Theorem~\ref{th-main-Morita}]
It follows from the corollary above 
that $\Ind_P^G H_\sigma$  includes a copy of every irreducible representation of every subgroup of $R_\sigma$, and certainly the same is true of $\ell^2 R_\sigma$. So Lemma~\ref{lem-Morita-general} applies with $X=\mathfrak{a}^*_{P}/W'_\sigma$ and $\Gamma=R_\sigma$.
\end{proof}

We shall conclude this section by showing how the statement of  Theorem~\ref{th-main-Morita}  can be streamlined using some standard $C^*$-algebra language (although we shall not use this language in what follows).    

Let  $\Gamma$ be a finite group. Denote by  $\lambda$ and $\rho$    the actions of $\Gamma$ on $\Compact (\ell^2 \Gamma)$ associated with the left and right regular representations, respectively.  In addition, if $\gamma \in \Gamma$, then denote by  $e_\gamma$   the rank-one projection onto  the functions  in $\ell^2\Gamma$  that are supported on $\gamma$.  If  $A$ is any $\Cs$-algebra with a $\Gamma$-action, then we denote by $A \rtimes \Gamma $ the crossed product $C^*$-algebra.

\begin{lemma}[{\cite[Prop.~4.3]{Rieffel80}}]
\label{lem-Rieffel}
The linear map
\[
 A \rtimes\Gamma   \longrightarrow (A\otimes \Compact(\ell^2 \Gamma))^{\Gamma,\lambda}
 \]
  defined by
\[
a\mapsto \textstyle \sum _{\gamma\in \Gamma} \gamma(a) \otimes e_\gamma \quad \text{and} \quad \gamma \mapsto 1 \otimes \rho (\gamma), 
\]
is an isomorphism of $C^*$-algebras. 
  \qed
\end{lemma}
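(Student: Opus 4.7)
The universal property of the crossed product $A\rtimes\Gamma$ presents it, in the finite-group case, as the algebra generated by a copy of $A$ together with a unitary representation $\gamma\mapsto u_\gamma$ satisfying the covariance relation $u_\gamma a u_\gamma^{*}=\gamma(a)$. So the map $\Phi\colon A\rtimes\Gamma\to A\otimes\Compact(\ell^2\Gamma)$ defined on the generators by
\[
\Phi(a)=\sum_{\gamma\in\Gamma}\gamma(a)\otimes e_\gamma,\qquad \Phi(u_\gamma)=1\otimes\rho(\gamma),
\]
will be a well-defined $*$-homomorphism once we verify the covariance relation on the image. Since $\rho(\gamma)e_{\gamma'}\rho(\gamma)^{*}=e_{\gamma'\gamma^{-1}}$, a direct computation gives $\Phi(u_\gamma)\Phi(a)\Phi(u_\gamma)^{*}=\sum_{\gamma'}\gamma'(a)\otimes e_{\gamma'\gamma^{-1}}=\sum_{\gamma''}(\gamma''\gamma)(a)\otimes e_{\gamma''}=\Phi(\gamma(a))$, as required. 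Checking that the image is fixed by $\lambda$ is equally mechanical: $\lambda(\gamma)$ commutes with $\rho(\gamma)$, and $\lambda(\gamma)e_{\gamma'}\lambda(\gamma)^{*}=e_{\gamma\gamma'}$ together with the $A$-action produce the necessary invariance of $\Phi(a)$.

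The substance of the proof is bijectivity, and the cleanest way is to adopt the matrix picture. The isomorphism $A\otimes\Compact(\ell^2\Gamma)\cong M_{|\Gamma|}(A)$ sends $a\otimes T$ to the matrix $(a\,t_{\gamma_1,\gamma_2})$ in the basis $(\delta_\gamma)$. Under this identification the $(\Gamma,\lambda)$-action reads $(\gamma\cdot f)_{\gamma_1,\gamma_2}=\gamma\bigl(f_{\gamma^{-1}\gamma_1,\,\gamma^{-1}\gamma_2}\bigr)$, and invariance forces
\[
f_{\gamma_1,\gamma_2}=\gamma_1\bigl(f_{e,\,\gamma_1^{-1}\gamma_2}\bigr).
\]
Consequently an invariant $f$ is determined by, and is freely prescribable by, its first row $\bigl(f_{e,\gamma}\bigr)_{\gamma\in\Gamma}\in A^{\,|\Gamma|}$. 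On the other hand, a short computation shows that $\Phi(au_\gamma)=\Phi(a)\Phi(u_\gamma)$ is the matrix whose only nonzero first-row entry is $a$ in column $\gamma$. Thus the $A$-module basis $\{u_\gamma\}_{\gamma\in\Gamma}$ of $A\rtimes\Gamma$ is mapped to a basis of possible first rows, which yields bijectivity at once.

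Finally, $\Phi$ is a $*$-homomorphism (one checks $\Phi(a)^{*}=\Phi(a^{*})$ and $\Phi(u_\gamma)^{*}=1\otimes\rho(\gamma^{-1})=\Phi(u_{\gamma^{-1}})$), and any bijective $*$-homomorphism between $C^{*}$-algebras is automatically isometric, so $\Phi$ is a $C^{*}$-algebra isomorphism onto $(A\otimes\Compact(\ell^2\Gamma))^{\Gamma,\lambda}$. The only step that needs genuine care is the bookkeeping of left- versus right-regular conventions; everything else is formal, and I anticipate no obstacle beyond keeping these conventions consistent throughout.
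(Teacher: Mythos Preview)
The paper does not give a proof of this lemma at all: it simply cites Rieffel and places a \qed{} after the statement. Your argument is a correct direct verification. The covariance computation, the $\lambda$-invariance check, and the matrix-coordinate description of the fixed-point algebra are all accurate as written; in particular your formula $f_{\gamma_1,\gamma_2}=\gamma_1(f_{e,\gamma_1^{-1}\gamma_2})$ for invariant matrices, together with the observation that $\Phi(au_\gamma)$ has first row equal to $a$ in column $\gamma$ and zero elsewhere, gives the bijection cleanly.

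One small point worth making explicit, since you raise the convention issue yourself: when $A$ is non-unital the elements $u_\gamma$ and $1\otimes\rho(\gamma)$ live only in multiplier algebras, so strictly speaking $\Phi$ is defined on the dense subalgebra $\{\sum_\gamma a_\gamma u_\gamma : a_\gamma\in A\}$ (which \emph{is} all of $A\rtimes\Gamma$ since $\Gamma$ is finite) via $\Phi(\sum a_\gamma u_\gamma)=\sum\Phi(a_\gamma)(1\otimes\rho(\gamma))$, and the universal property is that of covariant pairs into multiplier algebras. This is routine, but since you flagged the need for care with conventions, it is the one place where a reader might pause.
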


Combining Lemma~\ref{lem-Rieffel}  with    Theorem~\ref{th-main-Morita}, we obtain for any component $[P,\sigma]$ of the tempered dual a  Morita equivalence
\begin{equation}
\label{eq-Morita-summand}
 C_0\bigl (\mathfrak{a}^* , \Compact  (\Ind_P^G H_\sigma  )\bigr)^{W_\sigma}\underset{\text{\rm Morita}}{\simeq} C_0(\mathfrak{a}^*_P/W'_\sigma) \rtimes {R_\sigma}.
\end{equation}

Assembling the summands using the isomorphism of Theorem~\ref{thm-CsrG}, we obtain the following picture of the reduced $\Cs$-algebra up to Morita equivalence, due to Wassermann \cite{NoteWassermann}.

\begin{theorem}[{\cite[Thm. 8]{NoteWassermann}}]
\label{thm-CsrG-Morita}
There is a Morita equivalence  of $\Cs$-algebras
\begin{equation*}
\Csr(G)\underset{\text{\rm Morita}}{\simeq} \bigoplus _{[P,\sigma]}C_0(\mathfrak{a}^*_P /W'_\sigma ) \rtimes {R_\sigma},
\end{equation*}
where the   sum is    over    representatives of the  associate classes    $[P,\sigma]$.
\end{theorem}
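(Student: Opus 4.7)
The plan is to assemble this Morita equivalence directly from the ingredients already developed in the section, essentially by composing the results summand by summand and then taking a $C_0$-direct sum.

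First, I would invoke Theorem~\ref{thm-CsrG} to reduce the problem to showing, for a single associate class $[P,\sigma]$, the Morita equivalence
\[
C_0\bigl(\mathfrak{a}_P^*,\Compact(\Ind_P^G H_\sigma)\bigr)^{W_\sigma}\underset{\text{Morita}}\simeq C_0(\mathfrak{a}_P^*/W'_\sigma)\rtimes R_\sigma.
\]
Since a $C_0$-direct sum of Morita equivalence bimodules is again a Morita equivalence bimodule (the requisite density conditions are checked fiberwise in the direct sum), the statement of the theorem will follow once this is established for each summand.

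For the individual summand, the first step is to simplify the $W_\sigma$-action. By Corollary~\ref{cor-isomorphic-fixed-point-algebras}, the fixed-point algebra for the action \eqref{eq-w-equivariant-bundle-formula1} is $\ast$-isomorphic to the fixed-point algebra for the simpler action \eqref{eq-w-equivariant-bundle-formula2} in which only the base-point intertwiners $U_{w,0}$ appear. Next, using the semi-direct product decomposition $W_\sigma = W'_\sigma \rtimes R_\sigma$ from Theorem~\ref{th-R-group1} and the fact that $W'_\sigma$ acts trivially on $\Ind_P^G H_\sigma$ via the adjusted intertwiners (so the $W'_\sigma$-action reduces to translation on $\mathfrak{a}_P^*$), the isomorphism \eqref{eq-first-cstar-isomorphism-using-r-group} identifies the $W_\sigma$-fixed-point algebra with
\[
C_0\bigl(\mathfrak{a}_P^*/W'_\sigma,\Compact(\Ind_P^G H_\sigma)\bigr)^{R_\sigma}.
\]

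At this point Theorem~\ref{th-main-Morita} applies and provides an explicit imprimitivity bimodule implementing the Morita equivalence of this algebra with
\[
C_0\bigl(\mathfrak{a}_P^*/W'_\sigma,\Compact(\ell^2 R_\sigma)\bigr)^{R_\sigma}.
\]
Finally, I would apply Rieffel's isomorphism (Lemma~\ref{lem-Rieffel}) fiberwise—or, more precisely, with $A = C_0(\mathfrak{a}_P^*/W'_\sigma)$ carrying the $R_\sigma$-action by translation—to identify the latter algebra with the crossed product $C_0(\mathfrak{a}_P^*/W'_\sigma)\rtimes R_\sigma$. Chaining these three identifications with the Morita equivalence produces the desired statement.

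The computation is essentially a bookkeeping exercise, but the subtle point that one should be careful about is ensuring that Rieffel's isomorphism extends correctly when the ``trivial" $C^*$-algebra $A$ is replaced by the $C_0$-section algebra $C_0(\mathfrak{a}_P^*/W'_\sigma)$ equipped with the translation action of $R_\sigma$. This requires verifying that the formulas in Lemma~\ref{lem-Rieffel} still implement a $C^*$-algebra isomorphism in the presence of the base space—which is immediate once one recognizes that the whole construction is natural in $A$ and that the $R_\sigma$-action on the base space is proper (in fact, by a finite group). This is the only place where some care is needed; the rest is straightforward assembly.
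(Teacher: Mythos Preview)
Your proposal is correct and follows essentially the same route as the paper: the paper obtains the Morita equivalence summand by summand via the chain Corollary~\ref{cor-isomorphic-fixed-point-algebras} $\to$ isomorphism~\eqref{eq-first-cstar-isomorphism-using-r-group} $\to$ Theorem~\ref{th-main-Morita} $\to$ Lemma~\ref{lem-Rieffel}, and then assembles using Theorem~\ref{thm-CsrG}. Your remark about checking that Rieffel's isomorphism applies with $A = C_0(\mathfrak{a}_P^*/W'_\sigma)$ is the only place you add detail beyond the paper, and it is a reasonable point to flag.
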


 \section{Further Information about the Knapp-Stein  Intertwining Groups}
 \label{sec-knapp-stein-vogan-r-groups}
The results in the preceding section give an account of the structure of $C^*_r(G)$ up to Morita equivalence in terms of the intertwining groups $W_\sigma$ and their semi-direct product decompositions $W_\sigma = W_\sigma ' \rtimes R_\sigma$.  
In this section we shall summarize the additional facts about these decompositions that we shall need to complete the computations in this paper.

\subsection*{The W'-Group}

We defined $W'_\sigma$ using the action of the Knapp-Stein intertwining operators on the representations $\pi_{\sigma, 0}$.  An important result is that $W'_\sigma$  is also the Weyl group of a root system:

\begin{theorem}[{\cite[\S13 and \S15]{KS2} and \cite[\S 6]{Knapp_commut}}]
\label{th-W-prime-group}
The   subgroup $W'_\sigma\triangleleft W_\sigma$ is the Weyl group of a \textup{(}possibly non-reduced\textup{)} root system $\Delta'_\sigma $ spanning a subspace\footnote{To be precise, there is an isomorphism from  $W'_\sigma$ to the  Weyl group of a root system $\Delta_\sigma'$ spanning a subspace, and the isomorphism gives the action of $W'_\sigma$ on that subspace. There is a complementary subspace on which the action of $W_\sigma'$ is trivial.} of $\mathfrak{a}_P^*$.  The action of the group $W_\sigma$ on $\mathfrak{a}_P^*$ permutes the roots in $\Delta'_\sigma$.
 \end{theorem}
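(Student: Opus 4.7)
The plan is to identify $\Delta'_\sigma$ explicitly as the set of restricted roots $\alpha$ of $(\mathfrak{g},\mathfrak{a}_P)$ for which the rank-one Knapp-Stein intertwiner $U_{s_\alpha,0}$ acts as a scalar on $\Ind_P^G H_\sigma$, and then bootstrap from this rank-one description to the full statement using the factorization of Knapp-Stein intertwiners into products of rank-one pieces.

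First I would set up the rank-one reduction. For each reduced restricted root $\alpha$ of $(\mathfrak{g},\mathfrak{a}_P)$ the reflection $s_\alpha$ lies in the restricted Weyl group, and $U_{s_\alpha,\varphi}$ is realized inside the rank-one Levi subgroup determined by $\alpha$. Harish-Chandra's rank-one Plancherel theory expresses $U_{s_\alpha,0}$ as the product of a Plancherel density factor and an involution whose $\pm 1$-eigenspaces decompose the induced representation. I would define $\Delta'_\sigma$ to consist of those reduced roots $\alpha$ for which this operator is actually a scalar on the full $\Ind_P^G H_\sigma$, adjoining $2\alpha$ when it is also a root, to accommodate the possible non-reducedness.

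Second, I would invoke the Knapp-Stein factorization: for each $w\in W_\sigma$ one can write $U_{w,0}$, up to scalar, as a product of rank-one intertwiners $U_{s_\alpha,0}$ indexed by the roots made negative by $w^{-1}$, in a suitable reduced-word order. This factorization immediately gives $s_\alpha\in W'_\sigma$ whenever $\alpha\in\Delta'_\sigma$. Conversely, if $w\in W'_\sigma$, then factoring $U_{w,0}$ and using that $W'_\sigma$ is a group, one deduces that every root made negative by $w^{-1}$ must itself contribute a scalar, forcing $w\in \langle s_\alpha : \alpha\in\Delta'_\sigma\rangle$. Thus $W'_\sigma$ coincides with the group generated by these reflections. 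The root-system axioms then follow: closure of $\Delta'_\sigma$ under each $s_\alpha$ comes from $s_{s_\alpha\beta}=s_\alpha s_\beta s_\alpha$ combined with the group property of $W'_\sigma$; integrality of reflection coefficients is inherited from the ambient restricted root system; stability under negation is automatic. Finally, the $W_\sigma$-stability of $\Delta'_\sigma$ comes from the conjugation identity $U_{w,0}U_{s_\alpha,0}U_{w,0}^{-1}$ equaling a scalar multiple of $U_{s_{w\alpha},0}$, obtained from the cocycle relation \eqref{eq-cocycle-relation}, so that scalarness is preserved under $W_\sigma$-conjugation.

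The main obstacle is the rank-one analysis itself. Determining precisely when $U_{s_\alpha,0}$ is scalar requires a delicate Plancherel-theoretic computation in the rank-one Levi subgroup, distinguishing the cases in which the inducing representation $\sigma$ extends, or fails to extend, to a larger rank-one subgroup. This is the heart of Knapp's commutativity paper, and packaging its conclusion as a combinatorial condition on $\alpha$ is the nontrivial step; once this dichotomy is in hand, the full theorem follows formally from the intertwiner factorization.
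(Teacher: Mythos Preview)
The paper does not supply its own proof of this theorem: it is quoted directly from Knapp--Stein \cite[\S13, \S15]{KS2} and Knapp \cite[\S6]{Knapp_commut}, with a pointer to \cite[Ch.~XIV, \S9]{Knapp1} for the definition of $\Delta'_\sigma$. So there is nothing in the paper to compare your proposal against; the authors treat the result as an external input to be summarized, not re-proved.

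That said, your outline tracks the Knapp--Stein strategy faithfully in broad strokes: $\Delta'_\sigma$ is indeed defined via the vanishing of the rank-one Plancherel factors $\mu_\alpha(\sigma)$ (equivalently, scalarness of the normalized rank-one intertwiner at $\varphi=0$), and the long intertwiners do factor into rank-one pieces. The $W_\sigma$-stability of $\Delta'_\sigma$ via conjugation of intertwiners is also the right mechanism.

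There is, however, a genuine gap in your converse step. You claim that if $w\in W'_\sigma$ and one factors $U_{w,0}$ into rank-one pieces, then ``using that $W'_\sigma$ is a group, one deduces that every root made negative by $w^{-1}$ must itself contribute a scalar.'' This does not follow: a product of non-scalar unitaries can perfectly well be scalar, and nothing in the group property of $W'_\sigma$ forces the individual reflections in a reduced expression for $w$ to lie in $W'_\sigma$. Knapp and Stein do \emph{not} obtain the inclusion $W'_\sigma\subseteq\langle s_\alpha:\alpha\in\Delta'_\sigma\rangle$ formally from the factorization. They instead use a counting/dimension argument: Harish-Chandra's product formula for the Plancherel density gives the order of the commuting algebra of $\pi_{\sigma,0}$ in terms of the $\mu_\alpha$, and comparing this with the linear-independence theorem for the $R$-group intertwiners (your Theorem~\ref{thm-r-group-linear-independence}) forces $|W'_\sigma|$ to equal the order of the Weyl group of $\Delta'_\sigma$. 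So the step you label ``formal'' is in fact where the analytic input from Harish-Chandra's Plancherel theory enters, and your sketch underestimates it.
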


\begin{definition}
We shall denote by 
  \[
  \mathfrak{a}^*_{\sigma,+}\subseteq \mathfrak{a}^*_P
  \]
   the (closed) dominant Weyl chamber in $\mathfrak{a}^*_P$ associated to some fixed system of positive roots $\Delta_{\sigma, +}'\subseteq \Delta_\sigma'$.  
   \end{definition}

See   \cite[Ch.~XIV, Sec.~9]{Knapp1} for the definition of the root system $\Delta'_\sigma$. One important consequence of Theorem~\ref{th-W-prime-group} for us will be that the quotient $\mathfrak{a}^*_P/W_\sigma'$ may be identified with the dominant chamber  $\mathfrak{a}^*_{\sigma,+}$  (in more detail, we shall use the fact that the projection map from the closed dominant chamber to $\mathfrak{a}^*_P/W_\sigma'$ is a homeomorphism).

\subsection*{The R-Group}

 Using the system of positive roots,  Knapp and Stein define $R_\sigma$ as a subgroup of $W_\sigma$, as follows:

\begin{definition}
\label{def-knapp-stein-r-group}
 The Knapp-Stein $R$-group $R_\sigma\subseteq W_\sigma$ is the  subgroup consisting of those elements that permute the positive roots  $\Delta'_{\sigma,+}\subseteq \Delta'_\sigma$  among themselves.
\end{definition}

This is consistent with our previous terminology: the subgroup $R_\sigma$ normalizes $W'_\sigma$, and since $W_\sigma$ acts by permutations   on the Weyl chambers in $\mathfrak{a}^*_P$ for the root system $\Delta'_\sigma$, while $W'_\sigma$ acts on the chambers simply-transitively, there is a semi-direct product decomposition $W _\sigma =  W'_\sigma\rtimes R_\sigma$.

Since $R_\sigma$ permutes the positive roots  among themselves, the action of $R_\sigma$ on $\mathfrak{a}^*_P $ restricts to an action 
\[
R_\sigma \times \mathfrak{a}^*_{\sigma,+}\longrightarrow \mathfrak{a}^*_{\sigma,+}
.
\]
We shall use this action in the next section.

By a \emph{reflection} of $\mathfrak{a}^*_P$ we shall mean an isometric  involution of $\mathfrak{a}^*_P$ with a one-dimensional ${-}1$-eigenspace.  Two reflections are \emph{orthogonal} if their ${-}1$-eigenspaces are orthogonal.

\begin{theorem}[{\cite[\S13 and \S15]{KS2} and \cite[\S 6]{Knapp_commut}}]
\label{th-R-group2}
The $R$-group associated to every  associate class $[P,\sigma]$  is  a finite product of groups of order two that act by pairwise orthogonal reflections on $\mathfrak{a}^*_P$.  
\end{theorem}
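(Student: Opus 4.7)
The plan is to upgrade the abstract structural information from Theorem~\ref{th-R-group2-prelim} --- that $R_\sigma$ is abelian and a finite product of cyclic groups of order two --- to the geometric statement: each $\Z/2$ factor acts as a reflection of $\mathfrak{a}_P^*$, and the reflections arising from distinct factors are mutually orthogonal. The key input will be the fact that $W_\sigma$ acts on $\mathfrak{a}_P^*$ through (a subgroup of) the little Weyl group of the restricted root system $\Delta(\mathfrak{g},\mathfrak{a}_P)$, so that its isometric involutions are available as candidates for being root reflections.

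First I would observe that the natural action of $W_\sigma = \{w \in N_K(L_P) : \Ad_w^*\sigma \simeq \sigma\}/(K \cap L_P)$ on $\mathfrak{a}_P^*$ factors through the little Weyl group $W(\mathfrak{g},\mathfrak{a}_P) = N_K(\mathfrak{a}_P)/Z_K(\mathfrak{a}_P)$, which is generated by reflections $s_\alpha$ for $\alpha \in \Delta(\mathfrak{g},\mathfrak{a}_P)$. Next, following the rank-one reduction of Knapp--Stein, I would analyze the reduced restricted roots individually: for each $\alpha$ satisfying $\Ad_{s_\alpha}^*\sigma \simeq \sigma$, the reflection $s_\alpha$ belongs to $W'_\sigma$ precisely when the Harish-Chandra Plancherel factor $\mu_\alpha(\sigma,\cdot)$ vanishes at $\varphi = 0$, and the remaining such $s_\alpha$ account for the reflection generators of $R_\sigma$. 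This shows that each order-two factor of $R_\sigma$ acts by a genuine reflection. Finally, for orthogonality: if $r_i = s_{\alpha_i}$ and $r_j = s_{\alpha_j}$ are distinct reflection generators of $R_\sigma$, they commute by Theorem~\ref{th-R-group2-prelim}, and a standard Euclidean fact forces two commuting reflections to have root vectors that are either parallel (in which case the reflections coincide) or perpendicular. Since $r_i \neq r_j$, we conclude $\alpha_i \perp \alpha_j$, yielding the pairwise orthogonality claim.

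The main obstacle is the middle step --- identifying each generator of the $R$-group with a reflection along a specific restricted root. This passes through the full rank-one machinery: one restricts to the Levi generated by $\pm\alpha$, carries out the $SL(2,\R)$ or $SL(2,\C)$ reducibility analysis for the corresponding rank-one principal series, and determines whether the outer equivalence $\Ad_{s_\alpha}^*\sigma \simeq \sigma$ produces an intertwiner that acts as a scalar at $\varphi = 0$ (contributing to $W'_\sigma$) or not (contributing to $R_\sigma$). Once this rank-one classification is in place, the global statement for $R_\sigma$ is assembled from the contributions of the roots in $\Delta(\mathfrak{g},\mathfrak{a}_P)$ that survive the analysis, and the reflection/orthogonality assertions then follow formally as in the previous paragraph.
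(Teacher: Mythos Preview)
The paper does not give a proof of this theorem; it is simply cited from Knapp--Stein \cite{KS2} and Knapp \cite{Knapp_commut} and then used. So there is no ``paper's own proof'' to compare against, and your sketch is in effect an outline of what those references do.

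Your outline is broadly faithful to the Knapp--Stein approach, and the final orthogonality step (commuting reflections with distinct $-1$-eigenspaces must be orthogonal) is correct and clean. One caution about the middle step, which you rightly flag as the crux: the dichotomy you state---that for each restricted root $\alpha$ with $s_\alpha\in W_\sigma$, either $s_\alpha\in W'_\sigma$ or $s_\alpha$ is one of the reflection generators of $R_\sigma$---is not automatic from the semidirect decomposition $W_\sigma=W'_\sigma\rtimes R_\sigma$ alone. A priori an $s_\alpha\notin W'_\sigma$ could land in a nontrivial coset $w'r$ with $w'\ne e$, and need not itself lie in $R_\sigma$. That the ``leftover'' reflections (those with nonvanishing Plancherel factor at $\varphi=0$) form a strongly orthogonal system and actually generate the $R$-group as a subgroup of $W_\sigma$ is exactly the content of Knapp's detailed classification in \cite[\S6]{Knapp_commut}; it does not follow formally from the rank-one reducibility criterion plus the abstract group structure. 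Your final paragraph acknowledges this, so as an outline pointing to the cited literature your proposal is accurate, but it should not be read as a self-contained argument.
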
   

 Finally, we shall need   the size of the group $R_\sigma$ in a crucial special case.
 
 \begin{definition} 
 \label{def-a-max}
Denote by $\mathfrak{a}_{\max}$ the split part of a maximally compact Cartan subalgebra of $\mathfrak{g}= \mathfrak{k} \oplus \mathfrak{s}$.  Thus $\mathfrak{a}_{\max}$ is the fixed part in $\mathfrak{s}$ of the action of a maximal torus in $K$.
\end{definition}

The space $\mathfrak{a}_{\max}$ is unique up to conjugation by elements of $K$, and so its dimension $\dim(\mathfrak{a}_{\max})$ is independent of any choices.

\begin{lemma}
$
 \dim(\mathfrak{a}_{\max}) \equiv \dim (G/K) \pmod 2.
$
 \end{lemma}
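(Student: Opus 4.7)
The plan is to decompose $\mathfrak{s}$ into the subspace $\mathfrak{a}_{\max}$ and its orthogonal complement, and then use the fact that the complement supports a fixed-point-free orthogonal representation of a torus, which forces its dimension to be even.

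More precisely, let $T \subseteq K$ be a maximal torus whose fixed subspace in $\mathfrak{s}$ is $\mathfrak{a}_{\max}$, as in the definition. Fix a $K$-invariant inner product on $\mathfrak{g}$ (for instance one coming from the Killing form, suitably modified on $\mathfrak{k}$). Since $\mathfrak{s}$ is stable under $\operatorname{Ad}(K)$, it is in particular stable under $\operatorname{Ad}(T)$, and the orthogonal complement
\[
\mathfrak{s}_1 = \mathfrak{a}_{\max}^{\perp} \subseteq \mathfrak{s}
\]
is a $T$-invariant subspace of $\mathfrak{s}$ on which $T$ acts orthogonally and, by the very definition of $\mathfrak{a}_{\max}$ as the full fixed subspace, without nonzero fixed vectors.

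Next, I would invoke the structure of real orthogonal representations of a torus. Any finite-dimensional real orthogonal representation of $T$ decomposes as an orthogonal direct sum of irreducible subrepresentations, each of which is either a trivial one-dimensional representation or a two-dimensional representation on which $T$ acts through a nontrivial character (rotations in a plane). Since $\mathfrak{s}_1$ has no trivial summand, every irreducible piece in its decomposition is two-dimensional, and hence $\dim(\mathfrak{s}_1)$ is even. Equivalently, one can complexify and observe that the nonzero weights of $T$ on $\mathfrak{s}_1 \otimes_{\R} \C$ occur in pairs $\{\alpha, -\alpha\}$.

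Combining these two observations,
\[
\dim(G/K) = \dim(\mathfrak{s}) = \dim(\mathfrak{a}_{\max}) + \dim(\mathfrak{s}_1) \equiv \dim(\mathfrak{a}_{\max}) \pmod 2,
\]
which is the desired congruence. There is no substantive obstacle here: the only thing to watch is that $\mathfrak{a}_{\max}$ really is the entire fixed subspace of $T$ on $\mathfrak{s}$ (so that $\mathfrak{s}_1$ is fixed-point free), and this is built into the definition recalled just above the statement.
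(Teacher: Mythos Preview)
Your proof is correct and is essentially identical to the paper's own argument: decompose $\mathfrak{s}$ as $\mathfrak{a}_{\max}\oplus(\mathfrak{s}\ominus\mathfrak{a}_{\max})$, note that the maximal torus acts on the complement with no nonzero fixed vectors, and conclude that the complement is even-dimensional because every nontrivial real irreducible representation of a torus is two-dimensional. The paper's proof is simply a terser version of what you wrote.
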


\begin{proof}
The action of the maximal torus associated to $\mathfrak{a}_{\max}$ on $\mathfrak{s}\ominus \mathfrak{a}_{\max}$ has no nonzero fixed vectors. Since every non-trivial irreducible representation of the torus has  dimension $2$, it follows that $\mathfrak{s}\ominus \mathfrak{a}_{\max}$ is even-dimensional.
\end{proof}

  \begin{theorem}
\label{th-R-group3}
If $[P,\sigma]$ is an associate class, and if $W'_\sigma= \{ e\}$, then the group $R_\sigma$ is generated by $\dim(\mathfrak{a}_P) {-} \dim(\mathfrak{a}_{\max})$  pairwise orthogonal reflections on $\mathfrak{a}$. \end{theorem}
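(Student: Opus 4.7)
The plan is to compute the number $n$ of reflection generators of $R_\sigma$ by identifying the fixed subspace of $R_\sigma$ acting on $\mathfrak{a}_P$ with the split part of a maximally compact Cartan subalgebra of $\mathfrak{g}$.

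By Theorem~\ref{th-R-group2}, $R_\sigma$ is a direct product of, say, $n$ copies of $\Z/2$ acting on $\mathfrak{a}_P^*$ by pairwise orthogonal reflections. Since the $n$ reflecting hyperplanes meet orthogonally, their $-1$-eigenlines span an $n$-dimensional orthogonal complement of the fixed subspace, so
\[
\dim \mathfrak{a}_P^{R_\sigma} \;=\; \dim \mathfrak{a}_P \;-\; n.
\]
The theorem therefore reduces to the identity $\dim \mathfrak{a}_P^{R_\sigma} = \dim \mathfrak{a}_{\max}$.

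Using that $\sigma$ is a discrete series representation of $M_P$, choose a compact Cartan subalgebra $\mathfrak{t} \subseteq \mathfrak{k} \cap \mathfrak{m}_P$ of $\mathfrak{m}_P$; then $\mathfrak{h} := \mathfrak{t} \oplus \mathfrak{a}_P$ is a $\theta$-stable Cartan subalgebra of $\mathfrak{g}$ with split part exactly $\mathfrak{a}_P$. I would next invoke the Knapp--Stein structure theory (see \cite[Ch.~XIV]{Knapp1} and \cite{KS2}) to realize each of the $n$ reflection generators of $R_\sigma$ as arising from a real root of $(\mathfrak{g},\mathfrak{h})$, that is, a root vanishing on $\mathfrak{t}$. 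Orthogonality of the reflections translates to strong orthogonality of these $n$ real roots, so the associated Cayley transforms commute; carrying them out simultaneously produces a new $\theta$-stable Cartan $\mathfrak{h}'$ of $\mathfrak{g}$ whose split part is canonically identified with $\mathfrak{a}_P^{R_\sigma}$.

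The final step is to verify that $\mathfrak{h}'$ is maximally compact, so that its split part has dimension $\dim \mathfrak{a}_{\max}$. This is exactly where the hypothesis $W'_\sigma = \{e\}$ must be used: any surviving real root of $(\mathfrak{g},\mathfrak{h}')$ would, upon transport back through the Cayley transforms, yield a reducibility root of $(\mathfrak{g},\mathfrak{a}_P)$ and hence a nontrivial element of the Weyl group $W'_\sigma$ of Theorem~\ref{th-W-prime-group}, contradicting the hypothesis. Therefore $\mathfrak{h}'$ admits no real roots at all, which is the standard criterion for a $\theta$-stable Cartan to be maximally compact, and the required identity follows. The principal obstacle is the dictionary in the second paragraph: extracting from Knapp--Stein a clean bijection between generating reflections of $R_\sigma$ and a strongly orthogonal set of real roots of $(\mathfrak{g},\mathfrak{h})$, and verifying that $W'_\sigma = \{e\}$ is precisely what forces the Cayley-transformed Cartan to be maximally compact rather than merely more compact than $\mathfrak{h}$.
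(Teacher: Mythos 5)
The paper itself does not prove Theorem~\ref{th-R-group3} here: it refers the reader to the sequel \cite{MatchingTheorem}, and explicitly says that the proof there uses Vogan's alternative approach to the $R$-group \cite[Sec.~4.3]{Voganbook}, which the authors describe as ``much better suited to the problem of computing $R_\sigma$ in the essential case.'' Your proposal takes a genuinely different route, staying inside the Knapp--Stein framework and arguing via Cayley transforms. Your opening reduction is fine: by Theorem~\ref{th-R-group2}, $R_\sigma$ is a product of $n$ commuting reflections with pairwise orthogonal $-1$-eigenlines, so $\dim \mathfrak{a}_P^{R_\sigma} = \dim\mathfrak{a}_P - n$, and the theorem becomes the equality $\dim \mathfrak{a}_P^{R_\sigma} = \dim\mathfrak{a}_{\max}$.

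The trouble is in the two steps you flag yourself, and they are real gaps rather than routine bookkeeping. First, you assert that each generating reflection of $R_\sigma$ ``arises from a real root of $(\mathfrak{g},\mathfrak{h})$.'' A priori, a reflection in $W_\sigma \subseteq W(\mathfrak{g},\mathfrak{a}_P)$ corresponds to a restricted root $\lambda \in \mathfrak{a}_P^*$, and such $\lambda$ need not be the restriction of a real root of $\mathfrak{h} = \mathfrak{t}\oplus\mathfrak{a}_P$: it may be the common restriction of a complex pair $\alpha, \theta\alpha$, in which case no Cayley transform in $\lambda$ is available. You would need to extract from \cite{KS2}, \cite{Knapp_commut} the precise characterization of which restricted roots contribute to $R_\sigma$ (via the behavior of the Plancherel factors $\mu_\lambda(\sigma,\cdot)$ at $\nu=0$) and show these are always ``real'' in your sense; that is the actual content of the theorem, not a citation you can wave at. Second, after the $n$ Cayley transforms you claim any surviving real root of $\mathfrak{h}'$ pulls back to a reducibility root, hence a nontrivial element of $W'_\sigma$. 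But a real root of $\mathfrak{h}$ whose restriction lies in $\mathfrak{a}_P^{*,R_\sigma}$ is not automatically a root of the Knapp--Stein system $\Delta'_\sigma$: one must first show its restricted reflection lies in $W_\sigma$ at all (i.e.\ fixes $\sigma$), and then that it lands in $W'_\sigma$ rather than producing yet another $R_\sigma$-reflection you failed to account for. Both points hinge on the same Plancherel-factor analysis. The outline is plausible and may well be completable, but as written it assumes the dictionary it sets out to build; the paper's choice to route the proof through Vogan's $R$-group (where the relevant reflections come packaged with $\theta$-stable data) is precisely a way to avoid this.
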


We refer the reader to \cite{MatchingTheorem} for a proof using an alternative approach to the $R$-group due to Vogan \cite[Sec.~ 4.3]{Voganbook} (which seems to be much better suited to the problem of computing $R_\sigma$ in the essential case).

\section{K-Theory of the Reduced C*-Algebra}
 
In this section we shall compute the $K$-theory \cite{Rordam} of  $C^*_r (G)$ as an abstract abelian group. Since it is a basic feature of $K$-theory  that for any family of $C^*$-algebras $\{ A_\alpha\}$ the natural map 
\[
\bigoplus_\alpha K_*(A_\alpha) \longrightarrow K_*(\bigoplus_\alpha A_\alpha)
\]
is an isomorphism, we can and shall   focus on the individual fixed-point algebras \[C_0 (\mathfrak{a}^*_P, \mathfrak{K} (\Ind
_P^G H_\sigma) )^{W_\sigma}\] 
that make up the reduced group $C^*$-algebra. We have seen that these are Morita equivalent to the $C^*$-algebras
\[C_0\bigl (\mathfrak{a}^*_{P}/W'_\sigma, \Compact (\ell^2 R_\sigma )\bigr)^{R_\sigma}.
\]
 Since $K$-theory is a Morita invariant it suffices to study the latter.
 
 The computations below are very simple from a $K$-theoretic point of view, but they require the difficult results about the $R$-group that we surveyed in the last section.

\subsection*{Essential and Inessential Components}
\label{subsec-essential-and-inessential}
The results in this section are due to Wassermann \cite{NoteWassermann}. 
We start from the following partition of the set of associate classes $[P,\sigma]$.

 \begin{definition}
 \label{def-essential-component}
 An associate class $[P,\sigma]$  is called \emph{essential} if the normal subgroup $W'_\sigma \triangleleft W_\sigma$ is   trivial.  Otherwise $[P,\sigma]$  is called \emph{inessential}.
 \end{definition}

\begin{theorem}
If $[P,\sigma]$ is inessential, then 
$K_*   \bigl (C_0\bigl (\mathfrak{a}^*_{P}/W'_\sigma, \Compact (\ell^2 R_\sigma )\bigr)^{R_\sigma}\bigr )   = 0 $.
\end{theorem}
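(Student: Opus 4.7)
The plan is to prove the much stronger statement that the $C^*$-algebra in question is \emph{contractible} as a $C^*$-algebra, from which vanishing of $K$-theory follows immediately. Using the identification of $\mathfrak{a}^*_P/W'_\sigma$ with the closed dominant chamber $\mathfrak{a}^*_{\sigma,+}$ that follows from Theorem~\ref{th-W-prime-group}, the algebra to study is $C_0(\mathfrak{a}^*_{\sigma,+}, \Compact(\ell^2 R_\sigma))^{R_\sigma}$. Inessentiality means $W'_\sigma \neq \{e\}$, so the root system $\Delta'_\sigma$ has a nonempty set $\Pi$ of simple positive roots.

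The first step is to produce a nonzero $R_\sigma$-invariant vector $\rho$ lying in the \emph{interior} of the chamber. I would take $\rho = \sum_{\alpha\in\Pi}\omega_\alpha$, the sum of the fundamental coweights dual to $\Pi$: since an element of $R_\sigma$ permutes the positive roots (Definition~\ref{def-knapp-stein-r-group}), it also permutes the simple ones, hence the dual basis $\{\omega_\alpha\}$, so $\rho$ is $R_\sigma$-invariant; the pairings $\langle\rho,\alpha\rangle>0$ for every $\alpha\in\Pi$ put $\rho$ in the open chamber. Nonvanishing of $\rho$ is precisely where the inessentiality hypothesis is used, through $\Pi\neq\emptyset$.

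The second step is to define the affine translation $T_s(\varphi) = \varphi + s\rho$ on $\mathfrak{a}^*_{\sigma,+}$ for $s\in [0,\infty)$. Since $\mathfrak{a}^*_{\sigma,+}$ is a convex cone and $\rho\in \mathfrak{a}^*_{\sigma,+}$, each $T_s$ maps the chamber into itself, and $R_\sigma$-invariance of $\rho$ makes $T_s$ equivariant. Pulling back yields a family $T_s^*$ of $R_\sigma$-equivariant $*$-endomorphisms of $C_0(\mathfrak{a}^*_{\sigma,+},\Compact(\ell^2 R_\sigma))$. The key technical check is that $s\mapsto T_s^*$ is norm-continuous with $T_0^* = \mathrm{id}$ and $T_s^* \to 0$ as $s\to\infty$: this last point, while routine, is where the interior location of $\rho$ matters, because on compactly supported $f$ the formula $T_s^*(f)(\varphi) = f(\varphi + s\rho)$ forces $T_s^*(f)=0$ as soon as $s$ is large enough that $\mathrm{supp}(f)-s\rho$ has left $\mathfrak{a}^*_{\sigma,+}$ (which occurs precisely because $\langle\rho,\alpha\rangle>0$ for every simple root $\alpha$).

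The final step is to reparametrize, say $s = t/(1-t)$, to obtain a norm-continuous homotopy on $[0,1]$ from the identity to the zero endomorphism, and then restrict to the $R_\sigma$-fixed subalgebra (using $R_\sigma$-equivariance of every $T_s^*$). This exhibits $C_0(\mathfrak{a}^*_P/W'_\sigma,\Compact(\ell^2 R_\sigma))^{R_\sigma}$ as a contractible $C^*$-algebra, whose $K$-theory therefore vanishes. The only step that is not completely formal is the production of the $R_\sigma$-invariant interior vector $\rho$, and that is a direct consequence of Definition~\ref{def-knapp-stein-r-group} together with inessentiality.
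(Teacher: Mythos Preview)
Your proof is correct and follows essentially the same approach as the paper's: identify $\mathfrak{a}^*_P/W'_\sigma$ with the closed chamber $\mathfrak{a}^*_{\sigma,+}$, exhibit a nonzero $R_\sigma$-invariant vector in the chamber, and use translation by that vector to contract the algebra equivariantly. The only difference is cosmetic: the paper takes $\rho$ to be the half-sum of the positive roots in $\Delta'_{\sigma,+}$ rather than your sum of fundamental weights, and the paper omits the routine details (norm-continuity, reparametrization to $[0,1]$) that you spell out.
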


\begin{proof}
Identify the quotient  $\mathfrak{a}^*_P / W'_\sigma$   with the dominant Weyl chamber   $\mathfrak{a}^*_{\sigma, +}\subseteq \mathfrak{a}^*_P$.  The half-sum of the positive roots is a nonzero vector $\rho$ in the chamber that   is fixed under the action of $R_\sigma$.  The translations by nonnegative  multiples of  $\rho$  map $\mathfrak{a}_{\sigma, +}^*$ into itself and  give an $R_\sigma$-equivariant homotopy between  the identity morphism on the $C^*$-algebra $C_0 (\mathfrak{a}^*_{\sigma ,+} , \Compact (\ell^2 R_\sigma  ) )$ and the zero morphism.  So the $R_\sigma$-fixed-point algebra   is homotopy equivalent to zero. 
\end{proof}

The essential components have nonzero $K$-theory, and their treatment requires  more of the  $R$-group results from Section~\ref{sec-knapp-stein-vogan-r-groups}.

\begin{theorem}
\label{thm-k-theory-0f-essential-component}
If $[P,\sigma]$ is essential, then 
$K_*  \bigl (C_0\bigl (\mathfrak{a}^*_{P} , \Compact (\ell^2 R_\sigma )\bigr)^{R_\sigma} \bigr )  $ is a free abelian group on one generator, which lies  in degree $ \dim (G/K)\pmod 2$. 
\end{theorem}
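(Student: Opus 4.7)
The strategy is to reduce the computation to the K-theory of a single building block, $C_0(\R) \rtimes \Z/2$ with the sign action, and then compute that explicitly via a six-term exact sequence.

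First, invoke Lemma~\ref{lem-Rieffel} (Rieffel's lemma) to identify the fixed-point algebra with a crossed product:
\[
C_0(\mathfrak{a}_P^*,\mathfrak{K}(\ell^2 R_\sigma))^{R_\sigma} \;\cong\; C_0(\mathfrak{a}_P^*) \rtimes R_\sigma.
\]
Since $[P,\sigma]$ is essential, $W'_\sigma = \{e\}$, and Theorem~\ref{th-R-group3} implies that $R_\sigma \cong (\Z/2)^n$ with $n = \dim(\mathfrak{a}_P) - \dim(\mathfrak{a}_{\max})$, acting on $\mathfrak{a}_P^*$ by $n$ pairwise orthogonal reflections. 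Decompose $\mathfrak{a}_P^* = V_0 \oplus V_1$ into the fixed subspace $V_0$, of dimension $\dim(\mathfrak{a}_{\max})$, and the orthogonal complement $V_1 \cong \R^n$, on which $R_\sigma$ acts as the coordinate-wise sign reversal. This yields
\[
C_0(\mathfrak{a}_P^*) \rtimes R_\sigma \;\cong\; C_0(V_0) \otimes \bigotimes_{i=1}^{n} \bigl(C_0(\R) \rtimes \Z/2\bigr).
\]

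Next, apply Bott periodicity to $C_0(V_0)$ and the K\"unneth formula for K-theory. Bott gives $K_*(C_0(V_0))$ free of rank one, concentrated in degree $\dim(\mathfrak{a}_{\max}) \bmod 2$, which equals $\dim(G/K) \bmod 2$ by the parity lemma preceding Theorem~\ref{th-R-group3}. K\"unneth applies because each tensor factor will turn out to have torsion-free K-theory, so the problem reduces to showing $K_0(C_0(\R) \rtimes \Z/2) = \Z$ and $K_1(C_0(\R) \rtimes \Z/2) = 0$ for the sign action of $\Z/2$ on $\R$.

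For this last step I would use the six-term exact sequence associated to restriction at the fixed point $0 \in \R$:
\[
0 \to C_0(\R \setminus \{0\}) \rtimes \Z/2 \longrightarrow C_0(\R) \rtimes \Z/2 \longrightarrow C^*(\Z/2) \to 0.
\]
Because $\Z/2$ acts freely on $\R \setminus \{0\}$ with quotient $(0,\infty)$, the ideal is Morita equivalent to $C_0((0,\infty))$, whose K-groups are $K_0 = 0$, $K_1 = \Z$ by Bott periodicity; the quotient $C^*(\Z/2) \cong \C \oplus \C$ has $K_0 = \Z^2$, $K_1 = 0$. The six-term sequence collapses to
\[
0 \to K_0(C_0(\R) \rtimes \Z/2) \to \Z^2 \xrightarrow{\ \partial\ } \Z \to K_1(C_0(\R) \rtimes \Z/2) \to 0.
\]

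The main obstacle is identifying $\partial$ explicitly. I would lift each of the two minimal projections $\tfrac{1 \pm u}{2} \in C^*(\Z/2)$ (where $u$ generates $\Z/2$) to the self-adjoint element $\phi \cdot \tfrac{1\pm u}{2}$ of the unitization of the crossed product, for an even bump function $\phi$ with $\phi(0)=1$ and $\phi$ vanishing at infinity. Exponentiating to $\exp(-2\pi i \tilde p_{\pm})$ and diagonalizing in the eigenbasis of $u$, one computes the image in $K_1(C_0((0,\infty))) = \Z$ as the winding number of a scalar-valued phase $\exp(-2\pi i \phi(x))$ over the one-point compactification, which equals $\pm 1$ in both cases (the same nonzero element up to sign). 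Consequently $\partial$ is surjective with kernel $\Z$, so $K_0 = \Z$ and $K_1 = 0$, as required. The subtle point --- and the reason the theorem really gives rank one rather than rank $2^n$ --- is precisely that $C_0((0,\infty))$ has nontrivial $K_1$, and that the boundary identifies the two spectral projections of $u$ with the same generator of this $\Z$.
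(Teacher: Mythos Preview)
Your argument is correct, and it follows the same global architecture as the paper: pass to a tensor product of a Euclidean factor $C_0(\R^d)$ (with $d=\dim\mathfrak{a}_{\max}\equiv\dim(G/K)\bmod 2$) and $n$ copies of a one-variable building block, then compute the building block. The difference is in how the building block is handled. The paper works directly with the fixed-point model $C_0(\R,\mathfrak{K}(\ell^2\Z_2))^{\Z_2}$ and uses the extension
\[
0\longrightarrow \mathfrak{J}\longrightarrow C_0(\R,\mathfrak{K}(\ell^2\Z_2))^{\Z_2}\stackrel{\pi}{\longrightarrow}\C\longrightarrow 0,
\]
where $\pi$ is evaluation at $0$ followed by compression to the \emph{trivial} $\Z_2$-isotype only; the point is that the kernel $\mathfrak{J}$ is then Morita equivalent to $C_0([0,\infty))$, which is contractible, so $\pi$ is a $K$-theory isomorphism with no boundary map to analyze. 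You instead take the full evaluation at $0$ in the crossed-product picture, land in $C^*(\Z_2)\cong\C^2$, and must then compute that the index map $\partial\colon\Z^2\to\Z$ sends both minimal projections to the same generator. Your computation of $\partial$ is right (both lifts exponentiate to unitaries conjugate to $\operatorname{diag}(e^{-2\pi i\phi},1)$, hence give the same winding class), but the paper's choice of extension sidesteps this entirely. What your route buys is that it uses only the most natural restriction map and makes the ``rank one, not $2^n$'' phenomenon visible as an explicit identification under $\partial$; what the paper's route buys is that no connecting-map calculation is needed at all, and it simultaneously identifies the generator with the image of a specific restriction--projection morphism (Theorem~\ref{thm-k-theory-0f-essential-component2}), which is used later in the second proof of Connes--Kasparov.
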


Actually, for the sake of a later calculation we shall make a more precise statement directly in terms of the $K$-theory of  $C_0(\mathfrak{a}_P^*, \Compact (\Ind_P^G H_\sigma ) )$.  The assumption that $[P,\sigma]$ is essential  implies that the group $R_\sigma$ decomposes as a direct product
\[
R_\sigma \cong \underset{\text{$q$ times}}{\underbrace{\Z_2 \times \cdots \times \Z_2}},
\]
where $q {=}  \dim(\mathfrak{a}_P) {-} \dim ( \mathfrak{a}_{\max}  ) $; see Theorem~\ref{th-R-group3}. 
The  generators of the   factors act on $\mathfrak{a}_P^*$ as pairwise orthogonal  reflections and the fixed subspace 
\[
\mathfrak{a}_P^{*,R_\sigma}\subseteq \mathfrak{a}_P^*
\]
for the action of $R_\sigma$ has dimension $d{=}\dim(\mathfrak{a}_{\max})$.   

It follows from Theorem~\ref{thm-r-group-linear-independence} and Proposition~\ref{prop-pullback-bundle} that for   $\varphi\in \mathfrak{a}_P^{*,R_\sigma}$ the elements of the group $R_\sigma$ act linearly independently on the representation space $\Ind_P^G H_\sigma{\otimes} \C_{i\varphi}$, which therefore decomposes into a direct sum of $|R_\sigma|$ distinct irreducible subrepresentations,
\begin{equation}
    \label{eq-decomposition-of-ind-sigma-tensor-exp-i-nu}
\Ind_P^G H_\sigma{\otimes} \C_{i \varphi} = \bigoplus _\mu X_{\mu, \varphi},
\end{equation}
on each of which $R_\sigma$ acts as multiples of a distinct character.  So we can write 
\begin{equation}
    \label{eq-direct-sum-decomposition-by-r-group}
C_0 \bigl ( \mathfrak{a}_P^{*,R_\sigma} , \Compact ( \Ind_P^G H_\sigma)\bigr )^{R_\sigma} = 
\bigoplus _\mu  C_0( \mathfrak{a}_P^{*,R_\sigma} , \Compact ( \boldsymbol{X_{\mu}})),
\end{equation}
where $\boldsymbol{X_{\mu}}$ is the bundle of Hilbert spaces with fibers $X_{\mu, \varphi}$. We can therefore form the $C^*$-algebra morphism
\begin{multline}
    \label{eq-restriction-projection}
C_0 \bigl ( \mathfrak{a}_P^{*} , \Compact ( \Ind_P^G H_\sigma)\bigr )^{R_\sigma} 
\longrightarrow
C_0 \bigl ( \mathfrak{a}_P^{*,R_\sigma} , \Compact ( \Ind_P^G H_\sigma)\bigr )^{R_\sigma} 
\\
\longrightarrow
C_0( \mathfrak{a}_P^{*,R_\sigma} , \Compact ( \boldsymbol{X_{\mu}}))
\end{multline}
in which the first map is restriction to 
$\mathfrak{a}_P^{*,R_\sigma}\subseteq \mathfrak{a}_P^*$ and the second is projection to a single summand in \eqref{eq-direct-sum-decomposition-by-r-group}. The target $C^*$-algebra is Morita equivalent to $C_0( \mathfrak{a}_P^{*,R_\sigma} )$ via the bimodule $C_0( \mathfrak{a}_P^{*,R_\sigma} , \boldsymbol{X_\mu})$ and so we can formulate a more precise version of Theorem~\ref{thm-k-theory-0f-essential-component} as follows:

\begin{theorem}
\label{thm-k-theory-0f-essential-component2}
If $[P,\sigma]$ is essential, then for every $\mu$ the restriction-projection morphism
\[
C_0\bigl (\mathfrak{a}^*_{P} , \Compact (\Ind_P^G H_\sigma )\bigr)^{R_\sigma}   
\longrightarrow 
C_0\bigl (\mathfrak{a}^{*,R_\sigma}_{P} , \Compact (\boldsymbol{X_\mu} ) \bigr)   
\]
in \eqref{eq-restriction-projection} induces an isomorphism 
\[
K_*\bigl (C_0\bigl (\mathfrak{a}^*_{P} , \Compact (\Ind_P^G H_\sigma  )\bigr)^{R_\sigma}  \bigr)
\stackrel \cong \longrightarrow K_* \bigl (C_0(\mathfrak{a}^{*,R_\sigma}_{P})\bigr ) .
\]
\end{theorem}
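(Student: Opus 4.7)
The plan is to exploit the reflection structure of $R_\sigma$ (from Theorem~\ref{th-R-group3}: $R_\sigma \cong (\Z_2)^q$ with $q = \dim(\mathfrak{a}_P) - \dim(\mathfrak{a}_{\max})$, acting on $\mathfrak{a}_P^*$ by $q$ pairwise orthogonal reflections) in order to reduce the theorem to a one-dimensional computation through Künneth. I would decompose $\mathfrak{a}_P^* = V \oplus F$ orthogonally, with $F = \mathfrak{a}_P^{*,R_\sigma}$ and $V \cong \R^q$ the sum of the reflection axes. After using Corollary~\ref{cor-isomorphic-fixed-point-algebras} to replace the $R_\sigma$-action by the constant one of \eqref{eq-w-equivariant-bundle-formula2}, the action is trivial in the $F$-direction, so
\[
C_0\bigl(\mathfrak{a}_P^*, \Compact(\Ind_P^G H_\sigma)\bigr)^{R_\sigma} \cong C_0(F) \otimes C_0\bigl(V, \Compact(\Ind_P^G H_\sigma)\bigr)^{R_\sigma},
\]
and the restriction-projection \eqref{eq-restriction-projection} becomes $\operatorname{id}_{C_0(F)} \otimes \varepsilon_\mu$, where $\varepsilon_\mu$ is evaluation at $0 \in V$ followed by projection onto the $\mu$-block of $\Compact(\Ind_P^G H_\sigma)^{R_\sigma} = \bigoplus_\nu \Compact(X_\nu)$. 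Since $K_*(C_0(F))$ is free abelian, the Künneth theorem will reduce matters to showing that $\varepsilon_\mu$ induces a $K_*$-isomorphism.

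Next I would transport this reduced problem via Theorem~\ref{th-main-Morita} and Lemma~\ref{lem-Rieffel}: the source of $\varepsilon_\mu$ becomes $C_0(V) \rtimes R_\sigma$, and $\varepsilon_\mu$ corresponds to the composition $\operatorname{pr}_\mu \circ \operatorname{ev}_0 \colon C_0(V) \rtimes R_\sigma \to \C[R_\sigma] \to \C$, with $\operatorname{pr}_\mu$ the projection onto the $\mu$-character. The product structure $V = \bigoplus_{i=1}^q \R_i$ as a $(\Z_2)^q$-space yields $C_0(V) \rtimes R_\sigma \cong \bigotimes_{i=1}^q (C_0(\R_i) \rtimes \Z_2)$, under which $\operatorname{pr}_\mu \circ \operatorname{ev}_0$ decomposes as a tensor product of $q$ analogous maps $\operatorname{pr}_{\mu_i} \circ \operatorname{ev}_0 \colon C_0(\R) \rtimes \Z_2 \to \C$, one for each reflection. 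A second appeal to Künneth reduces the whole argument to the base case $q = 1$.

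For $q = 1$, I would run the six-term $K$-theory sequence of $0 \to C_0(\R\setminus 0)\rtimes \Z_2 \to C_0(\R)\rtimes \Z_2 \to \C\rtimes \Z_2 \to 0$. The ideal is Morita equivalent to $C_0((0,\infty))$ (free $\Z_2$-action), giving $(K_0,K_1) = (0,\Z)$; the quotient $\C\rtimes\Z_2 \cong \C\oplus\C$ has $(K_0,K_1) = (\Z^2,0)$, generated by the character projections $e_\pm = (1\pm u)/2$. Lifting each $e_\pm$ to the self-adjoint element $\chi e_\pm \in C_0(\R) \rtimes \Z_2$ via an even bump function $\chi$ with $\chi(0)=1$ and using the identity $\exp(2\pi i\,\chi e_\pm) = 1 + (e^{2\pi i\chi}-1)e_\pm$ (valid because the even $\chi$ commutes with $e_\pm$ and $e_\pm^2=e_\pm$), both character lifts should yield the same winding generator of $K_1(C_0((0,\infty)))$ under the Morita identification of rank-one projections in $M_2$, so the connecting homomorphism $\partial \colon \Z^2 \to \Z$ comes out to be $(n,m)\mapsto n+m$. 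Consequently $K_0(C_0(\R)\rtimes \Z_2) = \Z$ is generated by $[e_+]-[e_-]$, $K_1=0$, and projection onto either character sends this class to $\pm 1 \in K_0(\C)$, establishing that each $\operatorname{pr}_{\mu_i}\circ\operatorname{ev}_0$ is a $K_*$-isomorphism. The hard part will be this connecting-map computation: verifying that the two distinct character lifts really produce the same winding, so that $\partial$ has rank one and both characters detect the resulting generator of $K_0(C_0(\R)\rtimes \Z_2)$.
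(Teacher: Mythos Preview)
Your plan is correct and shares the paper's overall architecture: pass to the Morita-equivalent model, split off the fixed direction $F=\mathfrak{a}_P^{*,R_\sigma}$, and use the product structure $R_\sigma\cong(\Z_2)^q$ to reduce to the one-reflection case. The difference lies in how the $q=1$ step is handled. You use the extension
\[
0\longrightarrow C_0(\R\setminus 0)\rtimes\Z_2\longrightarrow C_0(\R)\rtimes\Z_2\longrightarrow \C[\Z_2]\longrightarrow 0,
\]
whose ideal has $K_1=\Z$, so you must compute the boundary map $\partial\colon\Z^2\to\Z$ and then check that projection to a single character detects the resulting generator $[e_+]-[e_-]$. (Your winding computation is right: both lifts give the same class, so $\partial(n,m)=n+m$.) The paper instead builds the target character directly into the extension,
\[
0\longrightarrow\mathfrak{J}\longrightarrow C_0\bigl(\R,\mathfrak{K}(\ell^2\Z_2)\bigr)^{\Z_2}\stackrel{\pi}{\longrightarrow}\C\longrightarrow 0,
\]
and observes that $\mathfrak{J}$ is Morita equivalent to $C_0([0,\infty))$ (closed at the endpoint), hence $K$-contractible; so $\pi$ is a $K$-isomorphism with no connecting-map calculation at all. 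Your route is more hands-on and makes the generator $[e_+]-[e_-]$ of $K_0$ explicit; the paper's route is shorter because the choice of extension absorbs your boundary computation into the contractibility of a half-line.
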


\begin{proof}
We shall prove the Morita equivalent version of the theorem that uses $\ell^2 R_\sigma$. The $C^*$-algebra $C_0(\mathfrak{a}_P^*, \mathfrak{K}(\ell^2 R_\sigma))^{R_\sigma}$ admits a tensor product decomposition
 \begin{multline}
 \label{eq-tensor-product-decomposition}
 C_0(\mathfrak{a}_P^*, \mathfrak{K}(\ell^2 R_\sigma))^{R_\sigma}
 \cong  C_0(\R^d) \otimes C_0(\R, \mathfrak{K} (\ell^2 \Z_2))^{\Z_2}\otimes \cdots
\\
 \cdots \otimes C_0(\R, \mathfrak{K} (\ell^2 \Z_2))^{\Z_2},
\end{multline}
where $d$ is the dimension of the subspace of $\mathfrak{a}_P^*$ fixed by the $R_\sigma$-action, and where there are as many factors of $C_0(\R, \mathfrak{K} (\ell^2 \Z_2))^{\Z_2}$ as there are factors of $\Z_2$ in the group $R_\sigma$. 
Now each of the fixed point algebras in the factorization above fits in an extension 
\[
0 \longrightarrow \mathfrak{J} \longrightarrow 
C_0(\R, \mathfrak{K} (\ell^2 \Z_2))^{\Z_2} \stackrel \pi \longrightarrow \C \to 0
\]
where the quotient map $\pi$ is evaluation at $0\in \R$, followed by compression to the subspace of constant functions in $\ell^2\Z_2$. 
The kernel $\mathfrak{J}$ is Morita equivalent to the contractible $C^*$-algebra of $C_0$-functions on $[0,\infty)$.  It follows that the tensor product of the morphisms $\pi$ above gives an isomorphism in $K$-theory from the right-hand side in \eqref{eq-tensor-product-decomposition} to $C_0(\R^d)$, and  since the tensor product of the morphisms $\pi$ is the same as \eqref{eq-restriction-projection}, the theorem follows.
\end{proof}

Let us summarize:

\begin{theorem}
\label{thm-k-theory-c-star-g-summarized}
The group $K_{\dim (G/K)}(C^*_r (G))$ is a free abelian group on the set of essential associate classes,   while the group 
$
K_{\dim (G/K)+1}(\Csr  (G))$ is zero. More precisely, the $K$-theory of each essential summand of $C^*_r(G)$ is free abelian in one generator in degree $\dim(G/K)$, while the $K$-theory of each inessential summand of $C^*_r(G)$ is zero.
\end{theorem}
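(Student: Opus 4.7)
The plan is to assemble the theorem from the pieces already established. First, I would invoke the additivity of $K$-theory for $C_0$-direct sums together with the decomposition of $C^*_r(G)$ furnished by Theorem~\ref{thm-CsrG} to reduce the computation to the individual summand algebras $C_0(\mathfrak{a}_P^*, \mathfrak{K}(\Ind_P^G H_\sigma))^{W_\sigma}$, one for each associate class $[P,\sigma]$. Morita invariance of $K$-theory (via Theorem~\ref{thm-CsrG-Morita} and \eqref{eq-Morita-summand}) then lets me replace each summand by $C_0(\mathfrak{a}^*_P/W'_\sigma, \Compact(\ell^2 R_\sigma))^{R_\sigma}$ if convenient, although this replacement is not strictly needed for either half of the conclusion.

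Next I would split into cases. For each inessential associate class, the theorem preceding Theorem~\ref{thm-k-theory-0f-essential-component} already shows that the summand has vanishing $K$-theory in both degrees; the argument is the $R_\sigma$-equivariant contraction of $\mathfrak{a}^*_{\sigma,+}$ along nonnegative multiples of $\rho$, which is available precisely because $W'_\sigma$ is nontrivial. For each essential associate class, Theorem~\ref{thm-k-theory-0f-essential-component2} identifies the $K$-theory of the summand with $K_*(C_0(\mathfrak{a}_P^{*,R_\sigma}))$. Since $C_0(\mathfrak{a}_P^{*,R_\sigma}) \cong C_0(\R^d)$ for $d = \dim(\mathfrak{a}_P^{*,R_\sigma})$, Bott periodicity gives a free abelian group on one generator concentrated in degree $d \pmod 2$, and the other degree is zero.

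It remains to identify this degree with $\dim(G/K) \pmod 2$. By Theorem~\ref{th-R-group3}, in the essential case the group $R_\sigma$ is generated by $\dim(\mathfrak{a}_P) - \dim(\mathfrak{a}_{\max})$ pairwise orthogonal reflections on $\mathfrak{a}_P^*$, so their fixed subspace has dimension $d = \dim(\mathfrak{a}_P) - \bigl(\dim(\mathfrak{a}_P)-\dim(\mathfrak{a}_{\max})\bigr) = \dim(\mathfrak{a}_{\max})$. The lemma preceding Theorem~\ref{th-R-group3} then supplies the congruence $\dim(\mathfrak{a}_{\max}) \equiv \dim(G/K) \pmod 2$, so each essential summand contributes a single free generator in degree $\dim(G/K)$, with zero in the complementary degree.

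There is no real obstacle to overcome here; the theorem is assembled from Theorems~\ref{thm-CsrG}, \ref{thm-CsrG-Morita}, \ref{th-R-group3}, the vanishing result for inessential summands, Theorem~\ref{thm-k-theory-0f-essential-component2} and the parity lemma. The only point requiring any care is bookkeeping of the degree: one must track that the dimension $d$ in the Bott computation is precisely $\dim(\mathfrak{a}_{\max})$ and then apply the parity lemma, rather than mistakenly using $\dim(\mathfrak{a}_P)$. Once this is handled, the additivity of $K_*$ over the direct sum of summands delivers the global statement that $K_{\dim(G/K)}(C^*_r(G))$ is free abelian on the set of essential associate classes while $K_{\dim(G/K)+1}(C^*_r(G)) = 0$.
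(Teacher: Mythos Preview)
Your proposal is correct and follows essentially the same route as the paper: the theorem is presented there as a summary (``Let us summarize:'') with no separate proof, since it is obtained precisely by combining the direct-sum additivity of $K$-theory, the vanishing result for inessential summands, Theorem~\ref{thm-k-theory-0f-essential-component} (or its refinement~\ref{thm-k-theory-0f-essential-component2}), and the degree identification via Theorem~\ref{th-R-group3} and the parity lemma. Your bookkeeping of $d=\dim(\mathfrak{a}_{\max})$ matches the paper's discussion verbatim.
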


\section{The Connes-Kasparov Index Homomorphism}
\label{sec-ck-iso}

In this section we shall review the construction of Dirac operators on the symmetric space $K\backslash G$ of right $K$-cosets in $G$ and the definition of the Connes-Kasparov index homomorphism.

We shall fix for the rest of the paper a  $G$-invariant symmetric bilinear form 
\begin{equation}
    \label{eq-bilinear-form-b}
B\colon \mathfrak{g}\times \mathfrak{g} \longrightarrow \R
\end{equation}
that is positive-definite on $\mathfrak{s}$ and negative-definite on $\mathfrak{k}$ (where   $\mathfrak{g}= \mathfrak{k} \oplus\mathfrak{s}$ is the Cartan decomposition that was fixed at the beginning of Section~\ref{sec-parabolic-induction}).

\subsection*{Spin Modules}
The definitions and results in this section may be applied to any orthogonal action of a compact Lie group $K$ on a finite-dimensional Euclidean vector space  $\mathfrak{s}$. But soon the only example of interest will be the adjoint action of the maximal compact subgroup $K\subseteq G$  on the space $\mathfrak{s}$ in the Cartan decompostion  $\mathfrak{g}= \mathfrak{k} \oplus\mathfrak{s}$.  With this in mind  we shall denote the $K$-action   by \[(k,X)\longmapsto \operatorname{Ad}_k(X)\] for $k\in K$ and $X \in \mathfrak{s}$ (even though this is a slight abuse of notation in general).

Form the Clifford algebra $ \Cliff  (\mathfrak{s})$ using the  convention   that the square of any element  from $\mathfrak{s}$ is \emph{minus} the norm-squared of that element, for the inner product $\mathfrak{s}$.\footnote{This convention agrees with \cite{HuangPandzic}, which is one of the main references for the material here, but it disagrees with \cite{Meinrenken13}, which is the other main reference.} If $X \in \mathfrak{s}$, then we shall denote by $c(X)$ the corresponding element in the Clifford algebra, so that the  convention reads: \[c(X)^2=-\|X\|^2\cdot1.\]

\begin{definition} \label{defn:spin module}
If $\mathfrak{s}$ is even-dimensional, then a \emph{spin module} for  the pair $(K,\mathfrak{s})$ is a finite-dimensional, $\Z_2$-graded, complex Hilbert space  $S$ that is equipped with
\begin{enumerate}[\rm (i)]

\item  a representation of $\Cliff(\mathfrak{s})$, written
\[
(X,s) \longmapsto X \cdot s 
\]
for $X\in \mathfrak{s}$ and $s \in S$, in which each  $X$ acts as a grading-degree one, skew-adjoint operator, and 

\item a grading-degree zero, unitary representation of $K$ that is compatible with the representation of $\Cliff (\mathfrak{s})$ in the sense that 
\[
k\cdot (X \cdot s) = \operatorname{Ad}_k(X)\cdot (k\cdot s) 
\]
for every $k\in K$, every $X \in \mathfrak{s}$, and every $s\in S$.

\end{enumerate}
If $\mathfrak{s}$ is odd-dimensional,   then a {spin module} for  $(K,\mathfrak{s})$ is a spin module for $(K,\mathfrak{s}\oplus \R)$, where $\R$ is equipped with the trivial action of $K$. A spin module for $(K, \mathfrak{s})$ is \emph{irreducible} if it cannot be decomposed into a direct sum of two spin submodules. 
\end{definition}
 
 \begin{definition} 
 We shall denote by 
 $R_{\spin}(K,\mathfrak{s})$ the abelian group generated by isomorphism classes of spin modules subject to the relations 
 \[
 [S_1]+ [S_2] = [S_1\oplus S_2]
 \quad \text{and} \quad 
 [S]+ [S^{\text{\rm opp}}] = 0,
 \]
 where $S^{\text{\rm opp}}$ is obtained from $S$ by reversing the $\Z_2$-grading.
 \end{definition}
 
 This group may be analyzed using the following standard construction from Clifford algebras and Lie theory (compare \cite[Sec.~2.3]{HuangPandzic} or \cite[Sec.~2.2.10]{Meinrenken13}): 
 
 \begin{definition}
 \label{def-lie-clifford-morphism}
 The \emph{fundamental morphism}
  \[
 \alpha \colon \mathfrak{k} \longrightarrow \Cliff (\mathfrak{s}) 
 \]
 is defined by the formula 
  \[
 \alpha (Z) = - \frac 14 \sum c(\ad_Z( X_a))\cdot  
 c(X_a),
 \]
 where the sum is over any orthonormal basis  $\{ X_a\}$ of $\mathfrak{s}$ (the sum is independent of the choice).
 \end{definition}

The fundamental morphism is a Lie algebra morphism (for the commutator bracket on the Clifford algebra) and moreover 
\[
c(\ad_Z(X)) = [\alpha(Z),c(X)]
\]
for all $X\in \mathfrak{s}$ and all $Z\in\mathfrak{k}$.  If $S$ is a spin module, then by composing the Clifford algebra action on $S$ with $\alpha$ we obtain a representation of  $\mathfrak{k}$ on $S$.  

Suppose for a moment that $\mathfrak{s}$ is even-dimensional. Fix  an irreducible representation $S_{\text{irr}}$   of the Clifford algebra on a finite-dimensional $\Z_2$-graded Hilbert space ($S_{\text{irr}}$ is unique up to a possibly grading-reversing unitary equivalence). The fundamental morphism endows $S_{\text{irr}}$ with a $\mathfrak{k}$-action, and so to any spin module $S$ we can associate the $\Z_2$-graded  $\mathfrak{k}$-module
\begin{equation}
    \label{eq-def-of-mod-s}
\module(S) = \operatorname{Hom}_{\Cliff (\mathfrak{s})} (S, S_{\text{\rm irr}}),
\end{equation}
(the morphisms in $\module(S) $ need not be grading-preserving). We can reconstruct $S$ from $\module(S)$ via the canonical isomorphism
\begin{equation}
    \label{eq-canonical-morphism-for-s}
 S_{\text{\rm irr}}\widehat{\otimes} \module (S)^*\stackrel \cong \longrightarrow S  ,
\end{equation}
where the tensor product is given the diagonal $\mathfrak{k}$-action.  Note that as a result, if $S$ is irreducible in the sense of Definition~\ref{defn:spin module}, then $\module(S)$ is an irreducible $\mathfrak{k}$-module.

If $\mathfrak{s}$ is odd-dimensional, then  we repeat the above with $\Cliff(\mathfrak{s}\oplus \R)$ in place of $\Cliff(\mathfrak{s})$.
In either case, the $\mathfrak{k}$-module $\module(S)$ does not necessarily integrate to a representation of $K$. However if we define a compact group $\tilde K$ by means of the  pullback    diagram
\[
\xymatrix{
\tilde K \ar[r]\ar[d] &  \operatorname{Spin} (\mathfrak{s})\ar[d]  \\
K\ar[r] & SO(\mathfrak{s})  \\
}
\]
in which the bottom morphism comes from the adjoint action of $K$ on $\mathfrak{s}$, then $\module(S)$ integrates to a representation of $\tilde K$. 

The pullback group  $\tilde K$  may or may not be connected, but in any case the kernel of the morphism   $\tilde K{\to}K$ is a two-element group, and there is a unique morphism from $\tilde K$ into the group of invertible elements in $\Cliff (\mathfrak{s})$ whose associated Lie algebra morphism is $\alpha$, and which maps the nontrivial element   of the kernel to minus the identity.

\begin{definition}
We shall say that a representation of $\tilde K$ is \emph{genuine} if the nontrivial element in the kernel of $\tilde K {\to} K$ acts as $-I$ in the representation.
\end{definition}

\begin{theorem}[{See for instance \cite[Thm~0.1]{EchterhoffPfante}}]
The abelian group $R_{\spin}(K,\mathfrak{s})$ is isomorphic via the correspondence $S \mapsto \module (S)$  to the free abelian group on the set of equivalence classes of irreducible and  genuine representations of $\tilde K$.  \qed
\end{theorem}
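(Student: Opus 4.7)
The plan is to handle the even-dimensional case via graded Morita theory for the Clifford algebra, and to reduce the odd-dimensional case to it by the very definition of a spin module for odd $\mathfrak{s}$. Assume first that $\mathfrak{s}$ is even-dimensional. The complexified algebra $\Cliff(\mathfrak{s})_\C$ is $\Z_2$-graded simple, with $S_{\text{\rm irr}}$ its unique (up to grading reversal) irreducible $\Z_2$-graded module, and the assignment $V \mapsto S_{\text{\rm irr}} \widehat{\otimes} V$ is a graded Morita equivalence from finite-dimensional $\Z_2$-graded complex vector spaces onto finite-dimensional $\Z_2$-graded $\Cliff(\mathfrak{s})$-modules.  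Its quasi-inverse is, up to duality, the functor $M \mapsto \module(M)$, which yields the canonical isomorphism \eqref{eq-canonical-morphism-for-s}.

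Next I would incorporate the $K$-action. Given a spin module $S$, the formula involving the fundamental morphism $\alpha$ endows $\module(S)$ with a compatible $\mathfrak{k}$-action, and the key point to check is that this action integrates to a \emph{genuine} unitary representation of $\tilde K$: the group $\tilde K$ acts on $S_{\text{\rm irr}}$ through its spin representation into $\operatorname{Spin}(\mathfrak{s})$, so the nontrivial kernel element $z \in \ker(\tilde K \to K)$ acts as $-I$ on $S_{\text{\rm irr}}$; since $K$ itself acts on $S$, the diagonal action of $\tilde K$ on $S \cong S_{\text{\rm irr}} \widehat{\otimes} \module(S)^*$ must factor through $K$, which forces $z$ to act as $-I$ on $\module(S)^*$ and hence on $\module(S)$. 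Conversely, for any genuine $\tilde K$-representation $V$, the Hilbert space $S_{\text{\rm irr}} \widehat{\otimes} V^*$ carries a diagonal $\tilde K$-action on which $z$ acts as $(-I)(-I) = I$, so it descends to $K$ and defines a spin module. By the underlying Morita equivalence, the two constructions are mutually inverse.

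The third step is to check that this correspondence matches the relations defining $R_{\spin}(K, \mathfrak{s})$. Direct sums are obviously preserved. Reversal of the grading on $S$ exchanges the even and odd parts of the graded $\tilde K$-representation $\module(S) = \module(S)_0 \oplus \module(S)_1$, because a grading-preserving Clifford morphism $S \to S_{\text{\rm irr}}$ is the same data as a grading-reversing morphism $S^{\text{\rm opp}} \to S_{\text{\rm irr}}$; consequently the formula
\[
 [S] \longmapsto [\module(S)_0] - [\module(S)_1]
\]
sends the defining relation $[S^{\text{\rm opp}}] = -[S]$ to an identity, giving a homomorphism out of $R_{\spin}(K, \mathfrak{s})$. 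Placing each irreducible genuine $\tilde K$-representation in even degree furnishes a canonical set of free generators, and the Morita equivalence of the previous paragraph promotes this into a bijection. The odd-dimensional case reduces to the even one by definition, once one notes that the pullback group $\tilde K$ for $(K, \mathfrak{s})$ coincides canonically with the one for $(K, \mathfrak{s} \oplus \R)$, since the added $\R$-summand carries the trivial $K$-action.

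The main obstacle, and the technical core of the argument, is the claim in the second paragraph that the $\mathfrak{k}$-action on $\module(S)$ integrates to the double cover $\tilde K$ rather than to $K$ itself; this is precisely the obstruction measured by the spin cover, and verifying it requires carefully tracking how exponentials of $\alpha(Z)$ in $\Cliff(\mathfrak{s})$ lift the adjoint action of $K$ to $\operatorname{Spin}(\mathfrak{s})$, together with the universal property of the pullback defining $\tilde K$.
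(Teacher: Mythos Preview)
The paper does not give its own proof of this statement: it is stated with a terminal \qed and a citation to \cite[Thm~0.1]{EchterhoffPfante}, so there is nothing in the paper to compare your argument against.

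Your argument is essentially correct. The graded Morita equivalence $V \mapsto S_{\text{\rm irr}}\widehat{\otimes} V^*$ between finite-dimensional $\Z_2$-graded vector spaces and finite-dimensional $\Z_2$-graded $\Cliff(\mathfrak{s})$-modules is the right mechanism, and your verification that the relation $[S]+[S^{\text{\rm opp}}]=0$ corresponds to swapping the even and odd parts of $\module(S)$ is exactly what is needed to produce the homomorphism $[S]\mapsto [\module(S)_0]-[\module(S)_1]$. The reduction of the odd case to the even one is correct for the reason you give: the map $K\to SO(\mathfrak{s}\oplus\R)$ factors through $SO(\mathfrak{s})$, so the two pullback groups coincide.

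One comment: the ``main obstacle'' you flag in your final paragraph is less serious than you suggest, and in fact you already dispose of it in your second paragraph. Once you observe (as the paper does just before the statement) that there is a group homomorphism $\tilde K\to\Cliff(\mathfrak{s})^\times$ lifting $K\to SO(\mathfrak{s})$ and sending the nontrivial kernel element to $-1$, the $\tilde K$-action on $\module(S)=\operatorname{Hom}_{\Cliff(\mathfrak{s})}(S,S_{\text{\rm irr}})$ is defined directly at the group level by $(k\cdot f)(s)=k\cdot f(k^{-1}\cdot s)$, with $\tilde K$ acting on $S$ through $K$ and on $S_{\text{\rm irr}}$ through $\Cliff(\mathfrak{s})^\times$. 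No integration argument is required; the genuineness then follows immediately from your sign computation. So you may drop the last paragraph and simply make this direct construction explicit in the second.
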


\subsection*{The Dirac Operator and its Square} 

For the rest of this section $K$  will be the given maximal compact subgroup of our real reductive group $G$, and $\mathfrak{s}$ will be the complementary subspace to $\mathfrak{k}$ in the Cartan decomposition $\mathfrak{g}=\mathfrak{k}\oplus \mathfrak{s}$.  We shall equip $\mathfrak{s}$ with the inner product  coming from   \eqref{eq-bilinear-form-b}.

Given a spin module $S$,   form the space $[C_c^\infty (G)\otimes S]^K$, where $K$ acts diagonally, and where the $K$-action on $C_c^\infty (G)$ is by left translation.

  \begin{definition}
\label{def-dirac-operator}
The \emph{Dirac operator} associated to a spin module $S$ is the  linear operator
   \[ 
 \slashed{D}_S \colon [C_c^\infty(G)\otimes   S  ] ^K \longrightarrow  [C_c^\infty(G)\otimes S ] ^K 
 \]
 given by the formula 
 \[
 \slashed{D}_S = \sum X_a \otimes c(X_a),
 \]
 in which  the sum is over any orthonormal basis $\{ X_a\}$ for $\mathfrak{s}$, as in Definition \ref{def-lie-clifford-morphism}, and $X_a$ acts on $C_c^\infty(G)$ via the left-translation action of $G$ on $C_c^\infty (G)$.
Compare  \cite{Parthasarathy,AtiyahSchmid}.
 \end{definition}

\begin{definition} 
The \emph{Casimir element} for $G$ is the element
\[
 \Omega_G =\sum Y^a Y_a 
\]
in the enveloping algebra\footnote{Generally we shall work with the complexification of the enveloping algebra of the real Lie algebra $\mathfrak{g}$, or equivalently the enveloping algebra of the complexification, but here it is not necessary to do so.}
$\mathcal{U}(\mathfrak{g})$, where the sum is over any basis $\{Y_a\}$ for $\mathfrak{g}$ with dual basis $\{ Y^a\}$ for the invariant form $B$ (so that $B(Y^a,Y_b) = \delta^a_b$).  Similarly the {Casimir element} for $K$ is the element
\[
 \Omega_K =\sum Z^b Z_b \in \mathcal{U}(\mathfrak{k}) ,
\]
where the sum is over any basis $\{Z_b\}$ for $\mathfrak{k}$ and   dual basis $\{ Z^b\}$ for the invariant form $B$, restricted to $K$.
\end{definition}

\begin{definition} 
The 
\emph{diagonal morphism}
\[
\Delta\colon \mathcal{U}(\mathfrak{k}) \longrightarrow \mathcal{U}(\mathfrak{g})\otimes \Cliff (\mathfrak{s})
\]
is the morphism of associative algebras for which
\[
\Delta(Z) =  Z{\otimes} I + I {\otimes} \alpha (Z)
\]
for all  $Z\in \mathfrak{k}$, where $\alpha$ is the fundamental morphism from   Definition~\ref{def-lie-clifford-morphism}.
\end{definition}

In the  next result, it is convenient to view  the Dirac operator algebraically, as an element of $\mathcal{U}(\mathfrak{g})\otimes \Cliff (\mathfrak{s})$; the choice of spin module $S$ is therefore no longer immediately relevant.  The expression for the square of the Dirac operator in Theorem \ref{thm-parthasarathy-formula} below is essentially due to Parthasarathy \cite{Parthasarathy}; see \cite[Prop.~3.1.6]{HuangPandzic} for a modern account. 

The bilinear form $B$ in \eqref{eq-bilinear-form-b} may be extended  to a nondegenerate symmetric complex-bilinear form 
\begin{equation}
    \label{eq-complex-bilinear-form-B}
B\colon \mathfrak{g}_{\C} \times \mathfrak{g}_{\C} \longrightarrow \C
\end{equation}
on the complexification of $\mathfrak{g}$.  This restricts to a nondegenerate bilinear form on each Cartan subalgebra of $\mathfrak{g}_{\C}$, and also on each Cartan subalgebra of $\mathfrak{k}_{\C}$. These restrictions may be used to identify the Cartan subalgebras with their complex vector space duals in the usual way,  and so we obtain nondegenerate complex-bilinear forms $B^*$ on these dual spaces.

\begin{theorem} 
\label{thm-parthasarathy-formula}
Let $\rho_K$ and $\rho_G$ be the half-sums of the positive roots  for $\mathfrak{k}_{\C}$ and $\mathfrak{g}_{\C}$  \textup{(}formed using any choices of Cartan subalgebras in the complexified Lie algebras and any systems of positive roots\textup{)}. 
The square of the Dirac operator
in $\mathcal{U}(\mathfrak{g})\otimes \Cliff (\mathfrak{s})$
is  given by the formula
\[
\slashed{D}^2 =        \Delta  \bigl (\Omega_K + B^*(\rho_K,\rho_K)     \bigr )    -   \bigl (\Omega_G   + B^*(\rho_G,\rho_G) \bigr ).
\]
\end{theorem}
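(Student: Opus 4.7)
The plan is to compute $\slashed D^2$ directly by expanding in an orthonormal basis $\{X_a\}$ of $\mathfrak s$, splitting off the diagonal contribution coming from the Clifford anticommutator from the off-diagonal contribution that lies in $\mathcal U(\mathfrak k) \otimes \Cliff(\mathfrak s)$, and then recognizing the whole expression as coming from $\Delta(\Omega_K)$, modulo a constant that is the content of the classical strange formula.

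First I would write
\[
\slashed D^2 = \sum_{a,b} X_a X_b \otimes c(X_a) c(X_b)
\]
and separate the terms with $a = b$ from those with $a \neq b$. The Clifford relations $c(X_a)^2 = -1$ and $\{c(X_a), c(X_b)\} = 0$ for $a \neq b$ collapse the diagonal piece to $-\sum_a X_a^2 \otimes 1$, and a short check using the signs of $B$ on the two summands of the Cartan decomposition yields $\sum_a X_a^2 = \Omega_G - \Omega_K$, so the diagonal piece equals $(\Omega_K - \Omega_G) \otimes 1$. The off-diagonal piece antisymmetrizes to
\[
\sum_{a<b} [X_a, X_b] \otimes c(X_a) c(X_b),
\]
and lies in $\mathcal U(\mathfrak k) \otimes \Cliff(\mathfrak s)$ since $[\mathfrak s, \mathfrak s] \subseteq \mathfrak k$. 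Expanding $[X_a, X_b]$ against a basis $\{Z_c\}$ of $\mathfrak k$, invoking the $B$-invariance of the bracket together with $\sum_a B(X_a, Y) c(X_a) = c(Y)$ for $Y \in \mathfrak s$, and using the skew-symmetry identity $\sum_a B(X_a, \ad_{Z^c} X_a) = 0$ to recognize $\sum_a c(X_a) c(\ad_{Z^c} X_a)$ as $4 \alpha(Z^c)$, this off-diagonal piece identifies with $2 \sum_b Z^b \otimes \alpha(Z_b)$.

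Next, expanding $\Delta(\Omega_K) = \sum_b \Delta(Z^b) \Delta(Z_b)$ via $\Delta(Z) = Z \otimes 1 + 1 \otimes \alpha(Z)$ yields
\[
\Delta(\Omega_K) = \Omega_K \otimes 1 + 2 \sum_b Z^b \otimes \alpha(Z_b) + 1 \otimes \alpha_\sharp(\Omega_K),
\]
where $\alpha_\sharp(\Omega_K) := \sum_b \alpha(Z^b) \alpha(Z_b) \in \Cliff(\mathfrak s)$ is the image of $\Omega_K$ under the unique algebra morphism $\mathcal U(\mathfrak k) \to \Cliff(\mathfrak s)$ extending $\alpha$. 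Combining with the previous step gives
\[
\slashed D^2 = \Delta(\Omega_K) - \Omega_G - \alpha_\sharp(\Omega_K),
\]
so the theorem reduces to the scalar identity $\alpha_\sharp(\Omega_K) = B^*(\rho_G, \rho_G) - B^*(\rho_K, \rho_K)$ in $\Cliff(\mathfrak s)$.

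This last identity is the main obstacle and is the Freudenthal--de Vries strange formula in the form used by Parthasarathy. I would prove it by letting $\alpha_\sharp(\Omega_K)$ act on the irreducible Clifford module $S_{\mathrm{irr}}$. Because $\Omega_K$ is central in $\mathcal U(\mathfrak k)$, the element $\alpha_\sharp(\Omega_K)$ commutes with $\alpha(\mathfrak k)$ and hence with the $\widetilde K$-action on $S_{\mathrm{irr}}$, so it acts as a scalar on each $\widetilde K$-isotypic component. Choosing a positive system for $\mathfrak g$ compatible with one for $\mathfrak k$, the extremal $\widetilde K$-weights of $S_{\mathrm{irr}}$ are known to be of the form $w\rho_G - \rho_K$ for coset representatives $w$ of $W_K \backslash W_G$, and the standard formula for the Casimir eigenvalue on a highest weight vector gives $\|w\rho_G\|^2 - \|\rho_K\|^2 = B^*(\rho_G, \rho_G) - B^*(\rho_K, \rho_K)$ on every component, independently of $w$, since $W_G$ acts isometrically. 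Thus $\alpha_\sharp(\Omega_K)$ acts as this single scalar on $S_{\mathrm{irr}}$, and since $S_{\mathrm{irr}}$ is a faithful irreducible module for $\Cliff(\mathfrak s)$ (or, in the odd-dimensional case, for its even subalgebra), equality holds in $\Cliff(\mathfrak s)$ itself.
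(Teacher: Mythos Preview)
The paper does not give its own proof of this theorem: it is stated as a known result, attributed to Parthasarathy and with a pointer to \cite[Prop.~3.1.6]{HuangPandzic} for a modern account.  Your argument is essentially the standard one found in those references---expand $\slashed D^2$ in an orthonormal basis of $\mathfrak s$, use the Clifford relations to split into diagonal and antisymmetric parts, identify the antisymmetric part as the cross term in $\Delta(\Omega_K)$, and reduce to the scalar identity $\alpha_\sharp(\Omega_K)=B^*(\rho_G,\rho_G)-B^*(\rho_K,\rho_K)$---and it is correct.

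One point that deserves a word of caution: your computation of the scalar via the $\widetilde K$-highest weights of $S_{\mathrm{irr}}$ being $w\rho_G-\rho_K$ is exactly right in the equal-rank case, where a Cartan subalgebra of $\mathfrak k$ is already a Cartan subalgebra of $\mathfrak g$.  In the unequal-rank case $\rho_G$ lives on a larger Cartan $\mathfrak h=\mathfrak t\oplus\mathfrak a$, and the $\mathfrak t$-weights of $S_{\mathrm{irr}}$ are restrictions, so the bookkeeping needs a small adjustment (one must track the $\mathfrak a$-contribution to $B^*(\rho_G,\rho_G)$ separately, or else argue directly that $\alpha_\sharp(\Omega_K)$ is central in $\Cliff(\mathfrak s)$ and compute the scalar by another route).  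This is handled in the references cited and is not a gap in the approach, but it is worth flagging since the paper explicitly allows arbitrary Cartan subalgebras in the statement.
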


 The scalars $B^*(\rho_K,\rho_K)$ and $B^*(\rho_G,\rho_G) $ are independent of  the choices of  Cartan subalgebras and systems of positive roots. Moreover they are  real-valued, and indeed non-negative. This may be seen by selecting a (complexification of a) $\theta$-stable Cartan subalgebra $\mathfrak{h} = \mathfrak{t} \oplus \mathfrak{a}\subseteq \mathfrak{g}$ with $\mathfrak{t}$ maximal abelian in $\mathfrak{k}$ and $\mathfrak{a}$ orthogonal to $\mathfrak{k}$, and observing that $\rho_K$ and $\rho_G$ are imaginary-valued on $\mathfrak{t}$, where $B$ is positive definite, while $\rho_G$ is real-valued on $\mathfrak{a}$, where $B$ is real-valued. 

In order to  reflect    this non-negativity, it will be convenient to change the notation in Theorem~\ref{thm-parthasarathy-formula}.  The formula 
\begin{equation}
    \label{eq-def-of-inner-product}
\langle X,Y\rangle = - B(X , \theta (Y))  \qquad (X,Y\in \mathfrak{g}).
\end{equation}
defines a positive-definite inner product on the real Lie algebra $\mathfrak{g}$. Let us extend it to a complex-sesquilinear  inner product on the complexification $\mathfrak{g}_{\C}$. Of course, this extension  restricts to an inner product on any Cartan subalgebra of the complexification, and   the restriction induces an inner product on the vector space dual of the Cartan subalgebra.  The same goes for any Cartan subalgebra of the Lie algebra of $K$. With this notation, Theorem~\ref{thm-parthasarathy-formula} may be restated as 
\begin{equation}
    \label{eq-parthasarathy-formula-with-norms}
\slashed{D}^2 =        \Delta  \bigl (\Omega_K + \|\rho_K\|^2     \bigr )    -   \bigl (\Omega_G   + \|\rho_G\|^2 \bigr ).
\end{equation}

Let us now bring the spin module $S$ back into the picture and compute the operator 
\[
\Delta   \bigl (\Omega_K + \|\rho_K\|^2     \bigr )
\colon  [C_c^\infty (G)\otimes S]^K \longrightarrow 
[C_c^\infty (G)\otimes S]^K 
\]
arising from  Theorem~\ref{thm-parthasarathy-formula}. 

 \begin{definition}
 \label{def-norm-of-spin-module}
 If $S$ is an irreducible spin module (Definition \ref{defn:spin module}), 
 then we define 
 \[
 \|S\| = 
 \| \mu + \rho _K\| ,
 \]
where $\mu$ is the  highest weight of the irreducible $\mathfrak{k}$-module $\module (S)$  (both $\mu$ and $\rho_K$ depend on a choice of Cartan subalgebra of $\mathfrak{k}_{\C}$  and system of positive roots, but the norm does not).  
 \end{definition}
 
\begin{lemma}
\label{lem-formula-for-delta-term}
If $S$ is an irreducible spin module, then the operator
\[
\Delta \bigl (\Omega_K + \|\rho_K\|^2 \bigr )   \colon [C_c^\infty (G) \otimes S]^K\longrightarrow [C_c^\infty (G)\otimes S]^K
\]
is $ \|S\|^2$ times the identity operator.
\end{lemma}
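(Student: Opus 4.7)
The plan is to expand $\Delta(\Omega_K)$ as an explicit element of $\mathcal{U}(\mathfrak{g})\otimes\Cliff(\mathfrak{s})$ and then use $K$-invariance to reduce each summand to an operator acting only on the spin module $S$, whose eigenvalue on the appropriate subspace can be read off. Starting from
\[
\Delta(\Omega_K) = \Omega_K\otimes 1 + 2\sum_b Z^b\otimes\alpha(Z_b) + 1\otimes\alpha(\Omega_K)
\]
(using symmetry of the canonical element $\sum_b Z^b\otimes Z_b$ of the invariant form $B$), I will differentiate $K$-invariance to obtain the relation $Z\otimes 1 = -1\otimes dK(Z)$ on $[C_c^\infty(G)\otimes S]^K$ for every $Z\in\mathfrak{k}$, where $dK\colon\mathfrak{k}\to\End(S)$ is the infinitesimal $K$-action on $S$. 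Using $\sum_b[Z^b,Z_b]=0$ (a consequence of unimodularity) together with $\sum_b Z^b Z_b = \sum_b Z_b Z^b$ in $\mathcal{U}(\mathfrak{k})$, a short calculation gives $\Omega_K\otimes 1 = 1\otimes dK(\Omega_K)$ on invariants; commuting the $\mathcal{U}(\mathfrak{g})$-factors past the $\Cliff$-factors before applying the invariance relation similarly gives $\sum_b Z^b\otimes\alpha(Z_b) = -1\otimes\sum_b\alpha(Z_b)\,dK(Z^b)$.

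Next I will identify the multiplicity structure of $S$. Differentiating the spin-module compatibility condition yields $[dK(Z),c(X)] = c(\ad_Z X) = [\alpha(Z),c(X)]$, so $dK(Z)-\alpha(Z)$ lies in the centralizer of $\Cliff(\mathfrak{s})$ in $\End(S)$. Writing $S\cong S_{\mathrm{irr}}\otimes M$ as a Clifford module, with $M\cong\module(S)^*$, that centralizer equals $1\otimes\End(M)$, so $dK(Z) = \alpha(Z)\otimes 1 + 1\otimes\beta(Z)$ for a Lie algebra representation $\beta\colon\mathfrak{k}\to\End(M)$. Substituting $dK=\alpha+\beta$ in the reduced expressions above, and expanding $dK(\Omega_K) = \alpha(\Omega_K) + 2C + \beta(\Omega_K)$ with coupling $C=\sum_b\alpha(Z^b)\otimes\beta(Z_b)$, the contributions of $\alpha(\Omega_K)$ and of $C$ coming from the three summands cancel (the spin Casimir with coefficients $+1,-2,+1$ and $C$ with coefficients $+2,-2,0$), leaving
\[
\Delta(\Omega_K)\big|_{\mathrm{inv}} = \beta(\Omega_K).
\]

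To finish, since $S$ is an irreducible spin module, $\module(S)$ is an irreducible $\mathfrak{k}$-module of highest weight $\mu$, so $M$ is irreducible with highest weight $-w_0\mu$. Freudenthal's formula, combined with Weyl-invariance of the norm and $w_0\rho_K = -\rho_K$, then gives $\beta(\Omega_K) = \|\mu+\rho_K\|^2-\|\rho_K\|^2 = \|S\|^2 - \|\rho_K\|^2$; adding $\|\rho_K\|^2$ yields the claim. The odd-dimensional case is covered by the standard doubling $\mathfrak{s}\to\mathfrak{s}\oplus\R$ built into the definition of spin modules. The main bookkeeping hurdle will be tracking signs and tensor-order conventions in the invariance relation while pushing $\mathcal{U}(\mathfrak{g})$-factors past Clifford factors; once these are pinned down, the cancellation of all the $\alpha$-terms is essentially forced and the Casimir simply restricts to act on the multiplicity space.
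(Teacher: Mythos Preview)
Your computation is correct: the expansion of $\Delta(\Omega_K)$, the use of $K$-invariance to replace $Z\otimes 1$ by $-1\otimes dK(Z)$ (with the commutation trick to handle the quadratic terms), and the cancellation leaving only $\beta(\Omega_K)$ all check out, as does the final identification of the Casimir eigenvalue on $\module(S)^*$ with $\|\mu+\rho_K\|^2-\|\rho_K\|^2$.

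The paper reaches the same endpoint by a more conceptual route that avoids the term-by-term cancellation.  It rewrites $[C_c^\infty(G)\otimes S]^K$ as $\operatorname{Hom}_{\tilde K}\bigl(\module(S),\,C_c^\infty(G)\otimes S_{\text{irr}}\bigr)$ using \eqref{eq-canonical-morphism-for-s}.  On this Hom-space, $\Delta(Z)$ acts through the target $C_c^\infty(G)\otimes S_{\text{irr}}$ (since the Clifford algebra, hence $\alpha$, sees only the $S_{\text{irr}}$ factor), and because the maps are $\tilde K$-equivariant this is intertwined with the $\mathfrak{k}$-action on the source $\module(S)$.  Hence $\Delta(\Omega_K)$ simply acts as the Casimir on $\module(S)$, giving the scalar $\|\mu+\rho_K\|^2-\|\rho_K\|^2$ in one stroke.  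Your approach makes the same reduction visible at the level of operators (your $\beta$ is exactly the action on the multiplicity space), at the cost of tracking the $\alpha(\Omega_K)$ and coupling terms through the cancellation; the paper's Hom-space reformulation hides that bookkeeping entirely.
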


\begin{proof}
Write $S \cong S_{\text{irr}}\otimes \module(S)^*$ as in \eqref{eq-canonical-morphism-for-s}. Under this isomorphism, the Clifford algebra acts on  $S_{\text{irr}}$, but not on $\module(S)^*$.  So the diagonal morphism $\Delta$ gives the action 
\[
\Delta(Z) = Z\otimes 1 \otimes 1 + 1 \otimes \alpha (Z) \otimes 1
\]
of the Lie algebra $\mathfrak{k}$ on  
\[
C_c^\infty (G) \otimes S\cong C_c^\infty (G) \otimes S_{\text{irr}}\otimes \module(S)^*.
\]
In contrast,  the $K$-fixed part of this space is computed using the actions  of $K$ (or, strictly speaking $\tilde K$) on all three factors, so that 
\[
\bigl [C_c^\infty (G) \otimes S \bigr ] ^K \cong \operatorname{Hom}_K \bigl ( \C, C_c^\infty (G)\otimes S\bigr ) \cong \operatorname{Hom}_{\tilde K} \bigl (\module(S), C_c^\infty (G) \otimes S_{\text{\rm irr}}\bigr ) .
\]
We can therefore compute the action of  $\Delta (\Omega_K)$ using either the action of $\mathfrak{k}$ on $C_c^\infty (G) \otimes S_{\text{\rm irr}} $ or the action of $\mathfrak{k}$ on $\module(S)$. Using the  latter it is well known that we obtain  $\|\mu + \rho_K\|^2-\|\rho_K\|^2$; See \cite[Prop 4.120]{KnappVoganBook} for instance.
\end{proof}

With this, we can simplify the formula for the square of the Dirac operator: 

\begin{theorem} 
\label{thm-parthasarathy-formula2}
The square of the Dirac operator
\[
\slashed{D}_S \colon [C_c^\infty (G)\otimes S]^K \longrightarrow 
[C_c^\infty (G)\otimes S]^K 
\]
associated to an irreducible spin module is given by the formula
\[
\pushQED{\qed} 
\slashed{D}_S^2 =  \|S\|^2  -   \bigl (\Omega_G   + \|\rho_G\|^2\bigr ).
\qedhere
\popQED
\]
\end{theorem}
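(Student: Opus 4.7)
The proof will be a direct assembly of the two results that immediately precede the statement, so the plan is essentially bookkeeping rather than a new computation.

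The plan is to take Theorem~\ref{thm-parthasarathy-formula}, in the form of the equation \eqref{eq-parthasarathy-formula-with-norms}, as the starting identity in $\mathcal{U}(\mathfrak{g})\otimes\Cliff(\mathfrak{s})$:
\[
\slashed{D}^2 = \Delta\bigl(\Omega_K + \|\rho_K\|^2\bigr) - \bigl(\Omega_G + \|\rho_G\|^2\bigr).
\]
First I would note that both sides make sense as operators on $[C_c^\infty(G)\otimes S]^K$ once one fixes the spin module $S$: the Casimir $\Omega_G$ acts through the left-translation action of $\mathfrak{g}$ on $C_c^\infty(G)$, the Casimir $\Omega_K$ and the image $\alpha(Z)\in\Cliff(\mathfrak{s})$ act through the tensor factor prescribed by $\Delta$, and $\slashed{D}$ becomes $\slashed{D}_S$ by interpreting $c(X_a)$ as Clifford multiplication on $S$. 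The scalars $\|\rho_K\|^2$ and $\|\rho_G\|^2$ are constants, and their non-negativity (already discussed in the paragraph following Theorem~\ref{thm-parthasarathy-formula}) is what makes the reformulation with norms legitimate.

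Next I would invoke Lemma~\ref{lem-formula-for-delta-term}, which gives the evaluation
\[
\Delta\bigl(\Omega_K + \|\rho_K\|^2\bigr) = \|S\|^2\cdot I
\]
on $[C_c^\infty(G)\otimes S]^K$ precisely because $S$ is irreducible; the point there is that the $K$-invariants are computed using the \emph{diagonal} action of $\tilde K$ on all three tensor factors of $C_c^\infty(G)\otimes S_{\text{irr}}\otimes \module(S)^*$, so the Casimir $\Delta(\Omega_K)$ may be evaluated on the irreducible $\tilde K$-module $\module(S)$ via the usual highest-weight formula $\|\mu+\rho_K\|^2-\|\rho_K\|^2$. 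Substituting this scalar into the displayed identity and moving the $-\|\rho_K\|^2$ into the first summand yields exactly
\[
\slashed{D}_S^2 = \|S\|^2 - \bigl(\Omega_G + \|\rho_G\|^2\bigr).
\]

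There is essentially no obstacle: both inputs are already in hand, and the only thing to check is compatibility of the two viewpoints, namely that the abstract element $\Delta(\Omega_K+\|\rho_K\|^2)\in\mathcal{U}(\mathfrak{g})\otimes\Cliff(\mathfrak{s})$ and its action on $K$-invariants really do agree under the action specified in Definition~\ref{def-dirac-operator}. The mildest potential pitfall is sign and normalization consistency between the Clifford convention $c(X)^2=-\|X\|^2$, the definition of $\alpha$, and the inner product \eqref{eq-def-of-inner-product} used to pass from Theorem~\ref{thm-parthasarathy-formula} to \eqref{eq-parthasarathy-formula-with-norms}; I would simply verify once that these conventions are the same as those used in the proof of Lemma~\ref{lem-formula-for-delta-term}, after which the proof reduces to the one-line substitution above.
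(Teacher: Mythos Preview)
Your proposal is correct and matches the paper's approach exactly: the paper treats this theorem as an immediate consequence of equation~\eqref{eq-parthasarathy-formula-with-norms} and Lemma~\ref{lem-formula-for-delta-term}, offering no separate proof (note the \texttt{\textbackslash qed} embedded in the statement). Your identification of the two inputs and the one-line substitution is precisely what the paper intends.
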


\subsection*{Infinitesimal Characters}

Let us quickly review some basic topics in representation theory.  For a further discussion of all these  concepts and results, see for instance \cite[Ch.~VIII]{Knapp1}.

If $\pi$ is any  continuous representation of $G$ on a Hilbert space $H_\pi$ by bounded, invertible operators, then we shall denote by $H_{\pi,\fin}$ the space of $K$-finite vectors in $H_\pi$.  The representation $\pi$ is said to be  \emph{admissible} if  each $K$-isotypical subspace in $H_{\pi,\fin}$ is finite-dimensional. According to a theorem of Harish-Chandra, if $\pi$ is unitary and irreducible, then it is admissible.  

If $\pi$ is admissible (but not necessarily unitary), then $H_{\pi,\fin}$ is included within the smooth vectors in $H_\pi$   and so it carries a representation of the complexified Lie algebra $\mathfrak{g}_{\C}$. One says that $\pi$ is  \emph{quasi-simple} if  $\mathcal{Z}(\mathfrak{g}_{\C})$, the center of the universal enveloping algebra of $\mathfrak{g}_{\C}$, acts as multiples of the identity operator.

Let $\mathfrak{h}$ be any Cartan subalgebra of $\mathfrak{g}$. Harish-Chandra defined an isomorphism
\begin{equation}
    \label{eq-harish-chandra-homo}
HC\colon \mathcal{Z}(\mathfrak{g}_{\C})\stackrel \cong \longrightarrow \mathcal{S}(\mathfrak{h}_{\C})^W ,
\end{equation}
where  $W$ is the Weyl group associated to $(\mathfrak{g},\mathfrak{h})$ and  $\mathcal{S}(\mathfrak{h}_{\C})^W $ is the $W$-invar\-iant part of the symmetric algebra of $\mathfrak{h}_{\C}$, the complexification of the real Lie algebra $\mathfrak{h}$.    We shall identify the range in \eqref{eq-harish-chandra-homo} with the algebra of $W$-invariant complex polynomial functions on the complex vector space $\operatorname{Hom}_{\R} (\mathfrak{h},\C)$.

If $\pi$ is an admissible and  quasi-simple   representation of $G$, then the \emph{infinitesimal character} of $\pi$ is the algebra homomorphism 
\[
\infch(\pi) \colon \mathcal{Z}(\mathfrak{g}_{\C}) \longrightarrow \C 
\]
that gives the action of the center of the enveloping algebra on $H_{\pi,\fin}$.
Using the Harish-Chandra isomorphism we can and will view the infinitesimal character as (any representative of) a $W$-orbit in $\operatorname{Hom}_{\R} (\mathfrak{h},\C)$.

\begin{definition}\label{def-real_im_Cartan}
Let $\mathfrak{h} = \mathfrak{t}_{\mathfrak{h}} \oplus \mathfrak{a}_{\mathfrak{h}} $ be a $\theta$-stable Cartan subalgebra of $\mathfrak{g}$, with $\mathfrak{t}_{\mathfrak{h}}\subseteq \mathfrak {k}$ and $\mathfrak{a}_{\mathfrak{h}}\subseteq \mathfrak {s}$.   An element of $\operatorname{Hom}_{\R} ( \mathfrak{h}, \C)$ is said to be \emph{real} if it belongs to 
\[
\operatorname{Hom}_\R (\mathfrak{t}_{\mathfrak{h}}, i \R)
\oplus
\operatorname{Hom}_\R (\mathfrak{a}_{\mathfrak{h}},   \R)\]
and it is said to be \emph{imaginary} if it belongs to 
\[
\operatorname{Hom}_\R (\mathfrak{t}_{\mathfrak{h}},   \R)
\oplus
\operatorname{Hom}_\R (\mathfrak{a}_{\mathfrak{h}},   i \R) .
\]
\end{definition}

The same  terminology may be applied to infinitesimal characters, using the Harish-Chandra isomorphism. Whether or not an infinitesimal character is real or imaginary  does not depend on the choice of representative of $\infch(\pi)$ within  its $W$-orbit.  Nor does it depend on the choice of Cartan subalgebra (as long as the Cartan subalgebra is stable under the Cartan involution). See \cite[p. 535]{Knapp1}.

For the next result, recall  that the complexification process outlined in the discussion  preceding  \eqref{eq-parthasarathy-formula-with-norms} 
endows $ \operatorname{Hom}_{\R} ( \mathfrak{h}, \C)$ with an inner product. Every element of $\operatorname{Hom}_{\R} ( \mathfrak{h}, \C)$ decomposes as a sum of real and imaginary parts, and we shall use the standard notation for these.

\begin{lemma}
\label{lem-action-of-casimir-and-inf-ch}
Let $\pi$ be a    unitary, admissible and quasi-simple  representation of $G$ on a Hilbert space $H_\pi$.  If the real and imaginary parts of the infinitesimal character of $\pi$ are orthogonal, then the operator $ \pi (\Omega_G)+ \|\rho_G\|^2 $ acts on $H_{\pi,\fin}$ as the scalar 
\[
\| \Re (\infch(\pi))\|^2  -  \| \Im (\infch(\pi))\| ^2  .
\]
\end{lemma}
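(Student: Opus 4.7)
The plan is to invoke the standard Harish-Chandra formula for the Casimir and then to convert the resulting bilinear expression into a sesquilinear one, using the orthogonality hypothesis to eliminate a cross-term.

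\textbf{Step 1.} Fix a $\theta$-stable Cartan subalgebra $\mathfrak{h} = \mathfrak{t}_{\mathfrak{h}} \oplus \mathfrak{a}_{\mathfrak{h}}$ and realize $\infch(\pi)$ as a $W$-orbit of elements $\lambda \in \operatorname{Hom}_{\R}(\mathfrak{h}, \C)$. A standard computation identifies the image of $\Omega_G$ under the Harish-Chandra isomorphism \eqref{eq-harish-chandra-homo} with the polynomial
\[
\lambda \longmapsto B^*(\lambda, \lambda) - B^*(\rho_G, \rho_G),
\]
where $B^*$ is the complex-bilinear form on $\mathfrak{h}^*_{\C}$ induced by \eqref{eq-complex-bilinear-form-B}; see for instance \cite[Ch.~VIII]{Knapp1}. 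Because $\pi$ is admissible and quasi-simple, $\pi(\Omega_G)$ acts on $H_{\pi,\fin}$ as the scalar $B^*(\lambda, \lambda) - B^*(\rho_G, \rho_G)$.

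\textbf{Step 2.} I would next translate from $B^*$ to the sesquilinear norm $\|\cdot\|$. Choosing bases of $\mathfrak{t}_{\mathfrak{h}}$ and $\mathfrak{a}_{\mathfrak{h}}$ orthonormal for $\langle X, Y\rangle = -B(X, \theta Y)$ and computing directly from Definition~\ref{def-real_im_Cartan}, one obtains the identities $B^*(\mu, \mu) = \|\mu\|^2$ for $\mu$ real, $B^*(\mu, \mu) = -\|\mu\|^2$ for $\mu$ imaginary, and $B^*(\mu, \nu) = \langle \mu, \nu\rangle$ when $\mu$ is real and $\nu$ is imaginary. The first identity, applied to the real element $\rho_G$, gives $B^*(\rho_G, \rho_G) = \|\rho_G\|^2$, so the $\rho_G$-terms cancel once $\|\rho_G\|^2$ is added at the end.

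\textbf{Step 3.} I decompose $\lambda = \Re \lambda + \Im \lambda$ and expand
\[
B^*(\lambda, \lambda) = B^*(\Re \lambda, \Re \lambda) + 2 B^*(\Re \lambda, \Im \lambda) + B^*(\Im \lambda, \Im \lambda).
\]
By Step 2 the diagonal terms contribute $\|\Re \lambda\|^2 - \|\Im \lambda\|^2$, while the cross-term equals $2\langle \Re \lambda, \Im \lambda\rangle$, which vanishes under the orthogonality hypothesis. Substituting into the formula from Step 1 and adding $\|\rho_G\|^2$ produces the claimed scalar. The main obstacle is the bookkeeping in Step 2: the decomposition $\mathfrak{h} = \mathfrak{t}_{\mathfrak{h}} \oplus \mathfrak{a}_{\mathfrak{h}}$ interacts asymmetrically with the real/imaginary decomposition (a ``real'' element takes values in $i\R$ on $\mathfrak{t}_{\mathfrak{h}}$ but in $\R$ on $\mathfrak{a}_{\mathfrak{h}}$), and one must verify that the signs coming from $\theta$, from $i^2 = -1$, and from the negativity of $B$ on $\mathfrak{k}$ conspire to give the clean formulas recorded above.
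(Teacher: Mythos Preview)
Your proposal is correct and follows the same route as the paper's proof: both rest on the standard Harish-Chandra identity $HC(\Omega_G)(\lambda) + B^*(\rho_G,\rho_G) = B^*(\lambda,\lambda)$, which the paper simply cites from \cite[Prop.~4.120]{KnappVoganBook}. You go further than the paper by actually spelling out the passage from the complex-bilinear $B^*$ to the sesquilinear $\|\cdot\|$ and showing explicitly where the orthogonality hypothesis kills the cross-term; the paper leaves this conversion implicit. One small caution: the exact formula $B^*(\mu,\nu)=\langle\mu,\nu\rangle$ for $\mu$ real and $\nu$ imaginary depends on which variable of the sesquilinear form is conjugate-linear (you may get a sign), but since you only use it to conclude that the cross-term vanishes under the orthogonality hypothesis, the argument is unaffected.
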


\begin{proof}
The formula in the statement of the lemma is a special case of  the following standard identity for the Harish-Chandra homomorphism \eqref{eq-harish-chandra-homo}: 
 \begin{equation*}
  HC (\Omega_G) (\lambda  )+ B^* (\rho_G, \rho_G) = B^*  (\lambda, \lambda  )\qquad \forall \lambda \in \mathfrak{h}_{\mathbb{C}}^*
 \end{equation*}
(we introduced the form $B^*$  in the discussion prior to the statement of Theorem~\ref{thm-parthasarathy-formula}).   For a proof see for instance 
\cite[Prop.~4.120]{KnappVoganBook}; the relation between the form $\left\langle\cdot,\cdot\right\rangle$ that appears there and  our $B^*$ is explained on
 p.299 of \cite{KnappVoganBook}.
\end{proof}

Let us now apply this to the  unitary principal series representations of $G$.
Let $[P,\sigma]$ be an associate class, 
and let  $P = M_PA_PN_P$ be the Langlands decomposition of $P$, so that $\sigma$ is an irreducible square-integrable representation of $M_P$.    Harish-Chandra showed that: 

\begin{theorem}  Whenever $M_P$ carries an irreducible square-integrable representation, the Lie algebra $\mathfrak{t}_P$  of any maximal torus in $K\cap M_P$ is a Cartan subalgebra of $\mathfrak{m}_P$.  
\end{theorem}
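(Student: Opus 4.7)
The plan is to deduce the lemma from Harish-Chandra's classical existence theorem for the discrete series, namely that a real reductive group admits at least one irreducible square-integrable representation if and only if it contains a compact Cartan subgroup (equivalently, its Lie algebra contains a Cartan subalgebra lying inside the Lie algebra of a maximal compact subgroup). Granting this deep input, the remaining argument is structural. First I would record the preliminary fact that $K\cap M_P$ is a maximal compact subgroup of $M_P$: since $M_P$ is $\theta$-stable by construction of the Langlands decomposition, its intersection with the fixed-point set $K$ of $\theta$ is maximal compact in $M_P$.

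Next I would apply the existence theorem to $M_P$ to produce a compact Cartan subalgebra $\mathfrak{h}_c\subseteq \mathfrak{k}\cap\mathfrak{m}_P$ of $\mathfrak{m}_P$. The key observation is then that $\mathfrak{h}_c$ must be the Lie algebra of a maximal torus in $K\cap M_P$. Indeed, since $\mathfrak{h}_c$ is a Cartan subalgebra of the reductive Lie algebra $\mathfrak{m}_P$ it is abelian and equals its own centralizer in $\mathfrak{m}_P$; restricting to $\mathfrak{k}\cap\mathfrak{m}_P$, this means any abelian subalgebra of $\mathfrak{k}\cap\mathfrak{m}_P$ containing $\mathfrak{h}_c$ must be contained in the centralizer $\mathfrak{h}_c$ itself, so $\mathfrak{h}_c$ is maximal abelian in the compact Lie algebra $\mathfrak{k}\cap\mathfrak{m}_P$. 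Maximal abelian subalgebras of a compact Lie algebra are precisely the Lie algebras of maximal tori, so $\mathfrak{h}_c$ is the Lie algebra of some maximal torus $T_0\subseteq K\cap M_P$.

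Finally, since maximal tori in the compact group $K\cap M_P$ are pairwise conjugate (the conjugation taking place inside the identity component), the given maximal torus with Lie algebra $\mathfrak{t}_P$ is conjugate to $T_0$, and consequently $\mathfrak{t}_P$ is conjugate in $\mathfrak{m}_P$ to the Cartan subalgebra $\mathfrak{h}_c$. Conjugates of Cartan subalgebras are Cartan subalgebras, so $\mathfrak{t}_P$ is itself a Cartan subalgebra of $\mathfrak{m}_P$, as required. The only nontrivial ingredient is Harish-Chandra's existence criterion for discrete series; everything else is bookkeeping with the Cartan decomposition and standard facts about compact Lie algebras.
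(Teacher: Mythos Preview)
Your argument is correct. The paper, however, does not supply its own proof of this statement: it simply records the result as a theorem of Harish-Chandra and directs the reader to \cite{Knapp1} for an exposition. Your derivation from Harish-Chandra's existence criterion for discrete series (a reductive group has square-integrable representations if and only if it possesses a compact Cartan subgroup) is precisely the standard route, and the structural steps you outline---$K\cap M_P$ is maximal compact in the $\theta$-stable group $M_P$, the compact Cartan subalgebra produced by the criterion is maximal abelian in $\mathfrak{k}\cap\mathfrak{m}_P$, and conjugacy of maximal tori transports the Cartan property to $\mathfrak{t}_P$---are all sound. So there is nothing to compare: you have supplied what the paper omits.
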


\begin{theorem}
\label{thm-harish-chandra-weak-classification}
Every irreducible, square-integrable representation of $M$ has real infinitesimal character.  Moreover, for every $N>0$ the set of equivalence classes of irreducible, square-integrable representations of $M$ with $
\| \infch (\sigma) \| < N 
$
is finite.
\end{theorem}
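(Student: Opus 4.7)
The plan is to appeal directly to Harish-Chandra's classification of the discrete series of a reductive group with compact Cartan subgroup; a detailed account may be found in \cite[Chs.~IX and XII]{Knapp1}. By the preceding theorem, since $M$ admits irreducible square-integrable representations, the Lie algebra $\mathfrak{t}_P$ of a maximal torus in $K\cap M$ is a Cartan subalgebra of $\mathfrak{m}_P$. It is $\theta$-stable and lies entirely in $\mathfrak{k}$, so in the decomposition $\mathfrak{h} = \mathfrak{t}_{\mathfrak{h}} \oplus \mathfrak{a}_{\mathfrak{h}}$ of Definition \ref{def-real_im_Cartan} applied to $\mathfrak{m}_P$ we have $\mathfrak{t}_{\mathfrak{h}} = \mathfrak{t}_P$ and $\mathfrak{a}_{\mathfrak{h}} = 0$.

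To each equivalence class of irreducible square-integrable representation $\sigma$ of $M$, Harish-Chandra's theorem attaches a Harish-Chandra parameter $\lambda_\sigma \in \operatorname{Hom}_\R(\mathfrak{t}_P, i\R)$ that is nonsingular with respect to the roots of $(\mathfrak{m}_{P,\C}, \mathfrak{t}_{P,\C})$ and lies in a shifted analytically integral weight lattice $\rho_c + \Lambda^*_M$ inside $\operatorname{Hom}_\R(\mathfrak{t}_P, i\R)$. Under the Harish-Chandra isomorphism \eqref{eq-harish-chandra-homo} built from the Cartan subalgebra $\mathfrak{t}_P$, the infinitesimal character $\infch(\sigma)$ is represented by the Weyl-group orbit of $\lambda_\sigma$. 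Because $\lambda_\sigma$ lies in $\operatorname{Hom}_\R(\mathfrak{t}_P, i\R)$ and there is no $\mathfrak{a}_{\mathfrak{h}}$-component, the infinitesimal character satisfies the reality requirement of Definition \ref{def-real_im_Cartan}, giving the first assertion.

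For the second assertion, the inner product induced on $\operatorname{Hom}_\R(\mathfrak{t}_P, i\R)$ from $B$ is Weyl-invariant, so $\|\infch(\sigma)\|$ is well-defined and equals $\|\lambda_\sigma\|$. The shifted lattice $\rho_c + \Lambda^*_M$ is a discrete subset of the finite-dimensional real vector space $\operatorname{Hom}_\R(\mathfrak{t}_P, i\R)$, so only finitely many of its points have norm less than $N$. Since distinct equivalence classes of discrete series correspond to distinct Weyl-group orbits of Harish-Chandra parameters, and the Weyl group is finite, the set of $\sigma$ with $\|\infch(\sigma)\| < N$ is finite.

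There is no serious obstacle in this argument. The only point demanding care is checking that the normalization of the Harish-Chandra isomorphism used in \eqref{eq-harish-chandra-homo} matches the classical identification of the infinitesimal character of a discrete series with the $W$-orbit of its Harish-Chandra parameter; this is bookkeeping rather than a conceptual issue, and is handled in \cite[\S XII.7]{Knapp1}.
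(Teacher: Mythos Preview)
Your argument is correct and is essentially the standard one; it is in fact more detailed than what the paper itself provides, since the paper gives no proof at all and simply refers the reader to \cite{Knapp1} for an exposition. One small point worth flagging: the group $M_P$ need not be connected, so a fully rigorous version of your argument should note that the discrete series of $M_P$ lie over finitely many discrete series of the identity component $M_P^0$ (with multiplicity bounded by $[M_P:M_P^0]$), after which your lattice argument for $M_P^0$ applies; this is handled in the references you cite and does not affect the substance of what you wrote.
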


For an exposition of these results, see for instance \cite{Knapp1}. We shall use the second statement in the second theorem in the next subsection. As a result of the first theorem, the Lie algebra $\mathfrak{t}_P \oplus \mathfrak{a}_P$ is a Cartan subalgebra of $\mathfrak{g}$, and we may compute the infinitesimal characters for the $(P,\sigma)$-principal series as follows:

\begin{lemma}[{See for example \cite[Prop.~8.22]{Knapp1}}]
\label{lem-inf-ch-princ-series}
The infinitesimal character of the unitary  $(P,\sigma)$-principal series representation $\pi_{\sigma, \varphi}$ is 
\[
\infch(\sigma) \oplus \varphi \in 
\operatorname{Hom}_\R (\mathfrak{t}_P, i \R)
\oplus
\operatorname{Hom}_\R (\mathfrak{a}_P, i \R).
\]
\end{lemma}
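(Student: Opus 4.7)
My plan is to deduce the formula from the parabolic compatibility of the Harish-Chandra isomorphism, once the infinitesimal character of the inducing representation has been computed directly.

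First, I would verify that $\mathfrak{h} := \mathfrak{t}_P \oplus \mathfrak{a}_P$ is a $\theta$-stable Cartan subalgebra of $\mathfrak{g}$. By the theorem immediately preceding the lemma, $\mathfrak{t}_P$ is Cartan in $\mathfrak{m}_P$; since $\mathfrak{a}_P$ is central in $\mathfrak{l}_P = \mathfrak{m}_P \oplus \mathfrak{a}_P$, the space $\mathfrak{h}$ is Cartan in $\mathfrak{l}_P$, and hence in $\mathfrak{g}$ because $\mathfrak{l}_P$ is a Levi factor of the parabolic $\mathfrak{p}$. The Harish-Chandra isomorphism \eqref{eq-harish-chandra-homo} may then be formed for both $\mathfrak{g}$ and $\mathfrak{l}_P$ with respect to this common Cartan.

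Next, I would invoke the parabolic compatibility of these homomorphisms. Starting from the triangular decomposition $\mathfrak{g}_{\C} = \bar{\mathfrak{n}}_{P,\C} \oplus \mathfrak{l}_{P,\C} \oplus \mathfrak{n}_{P,\C}$ and combining the associated projection $\mathcal{U}(\mathfrak{g}_{\C}) \to \mathcal{U}(\mathfrak{l}_{P,\C})$ with a $\rho_P$-translation, one constructs an algebra homomorphism $\mu_P \colon \mathcal{Z}(\mathfrak{g}_{\C}) \to \mathcal{Z}(\mathfrak{l}_{P,\C})$ with two properties: (a) the Harish-Chandra polynomials of $z$ and $\mu_P(z)$ agree on $\mathfrak{h}_{\C}^*$; and (b) for any admissible, quasi-simple $L_P$-module $V$, the action of $\mathcal{Z}(\mathfrak{g}_{\C})$ on $\Ind_P^G V$ factors as $\mu_P$ followed by the $\mathcal{Z}(\mathfrak{l}_{P,\C})$-action on $V$. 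The $\rho_P$-twist built into $\mu_P$ is precisely what absorbs the $e^{-\rho(\log a)}$ normalization in \eqref{eq-induced-space}, so that (b) holds for \emph{unitarily} induced representations.

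Applying (b) with $V = \sigma \otimes e^{i\varphi}$: since $A_P$ is central in $L_P$, the character $e^{i\varphi}$ differentiates on $\mathfrak{a}_P$ to $X \mapsto i\varphi(X) \in i\R$, and by the preceding Harish-Chandra theorem $\sigma$ has real infinitesimal character, lying in $\operatorname{Hom}_{\R}(\mathfrak{t}_P, i\R)$. These combine to give infinitesimal character $\infch(\sigma) \oplus i\varphi$ for $\sigma \otimes e^{i\varphi}$ on $\mathfrak{h}_{\C}^*$. By property (a), the same orbit computes the infinitesimal character of $\pi_{\sigma,\varphi}$ on $G$, which is the claimed formula under the identification of $\varphi \in \mathfrak{a}_P^*$ with $i\varphi \in \operatorname{Hom}_{\R}(\mathfrak{a}_P, i\R)$ implicit in the lemma's notation. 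The main obstacle is the bookkeeping of $\rho$-conventions: everything hinges on verifying that the $\rho_P$-shift in $\mu_P$ cancels exactly against the $e^{-\rho(\log a)}$ factor in the definition of unitary parabolic induction. Once this cancellation is pinned down, the remaining steps are essentially formal.
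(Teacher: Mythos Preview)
Your proposal is correct and is essentially the standard argument; the paper itself gives no proof at all, but simply cites \cite[Prop.~8.22]{Knapp1} as the source. The approach you outline---factoring the Harish-Chandra homomorphism for $\mathfrak{g}$ through that for the Levi $\mathfrak{l}_P$ via the relative homomorphism $\mu_P$, and then reading off the infinitesimal character from the inducing data---is precisely the proof given in Knapp's book, so you are reconstructing the cited reference rather than diverging from the paper.

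One small point worth tightening: you correctly flag the $\rho_P$-bookkeeping as the delicate step, but in your write-up you might state explicitly which $\rho$ is which. The $\rho$ appearing in \eqref{eq-induced-space} is the half-sum of $\mathfrak{a}_P$-roots in $\mathfrak{n}_P$, while the Harish-Chandra shift for $\mathfrak{g}$ involves the half-sum of positive roots for $\mathfrak{h}$ in $\mathfrak{g}$; the latter restricts to the former on $\mathfrak{a}_P$ plus the half-sum for $\mathfrak{t}_P$ in $\mathfrak{m}_P$, which is exactly the shift already absorbed into $\infch(\sigma)$. Making that decomposition explicit is what turns your closing remark from an obstacle into a verified cancellation. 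Also note that the lemma's display writes $\varphi$ rather than $i\varphi$ in the second summand; you have correctly identified that this is a notational convention (identifying $\mathfrak{a}_P^*$ with $\operatorname{Hom}_\R(\mathfrak{a}_P, i\R)$ via $\varphi \mapsto i\varphi$), and it would be worth saying so unambiguously in the final version.
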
 

Note that the two summands in the infinitesimal character  above are its real and imaginary parts, respectively.

\subsection*{Dirac Operator from the Representation Theory Point of View}

Now form   the Hilbert space $\Ind_P^G H_\sigma $ as in Definition~\ref{def-induced-representation-hilbert-space}, and given a    spin-module $S$, form the fixed space 
\[
[\Ind_P^G H_\sigma  \otimes S]^K .
\]
The same space is obtained if we replace $\Ind_P^G H_\sigma$ by its subspace of $K$-finite vectors, and as a result   $[\Ind_P^G H_\sigma\otimes S]^K$ carries an action of     $\mathfrak{g}$.  So if we regard $\Ind_P^G H_\sigma$ as carrying the principal series representation $\pi_{\sigma, \varphi}$, then  we may form the operator
\begin{equation}\label{eq-Dirac operator}
\slashed{D}_{\sigma,\varphi,S}  = \sum \pi_{\sigma, \varphi}(X_a) \otimes c(X_a)\colon [\Ind_P^G H_\sigma \otimes S] ^K
\longrightarrow  [ \Ind_P^G H_\sigma \otimes S] ^K .
\end{equation}
The following is an immediate consequence of Lemmas~\ref{lem-action-of-casimir-and-inf-ch}  and \ref{lem-inf-ch-princ-series}:

\begin{lemma}
The operator $ \pi_{\sigma, \varphi} (\Omega_G)+ \|\rho_G\|^2$ acts on $[\Ind_P^G H_{\sigma} \otimes S]^K $ as the scalar 
$ \|  \infch(\sigma)\|^2  -  \| \varphi \| ^2$.
 \qed
\end{lemma}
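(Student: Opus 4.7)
The plan is to obtain this lemma as an essentially immediate consequence of the two preceding results, Lemma~\ref{lem-action-of-casimir-and-inf-ch} and Lemma~\ref{lem-inf-ch-princ-series}, together with a short verification of the orthogonality hypothesis needed to invoke the former.

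First, I would confirm that Lemma~\ref{lem-action-of-casimir-and-inf-ch} is applicable to $\pi_{\sigma,\varphi}$: the principal series representation is unitary, hence admissible by Harish-Chandra, and quasi-simple because its irreducible constituents all share the single infinitesimal character computed in Lemma~\ref{lem-inf-ch-princ-series} (so $\mathcal{Z}(\mathfrak{g}_{\C})$ acts by scalars on the $K$-finite vectors, and in particular on $[\Ind_P^G H_\sigma{\otimes} S]^K$).

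Next I would identify the real and imaginary parts of the infinitesimal character, working with the $\theta$-stable Cartan subalgebra $\mathfrak{h} = \mathfrak{t}_P \oplus \mathfrak{a}_P$ provided by the theorems of Harish-Chandra quoted just above. By Theorem~\ref{thm-harish-chandra-weak-classification}, $\infch(\sigma)$ is real in the sense of Definition~\ref{def-real_im_Cartan}, so it contributes a summand in $\operatorname{Hom}_\R(\mathfrak{t}_P, i\R)$, while $\varphi\in \mathfrak{a}_P^*$ sits inside $\operatorname{Hom}_\R(\mathfrak{a}_P, i\R)$ (via $X \mapsto i\varphi(X)$, as is implicit in the unitary character $e^{i\varphi(\log a)}$ of Definition~\ref{def-principal-series}). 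Thus Lemma~\ref{lem-inf-ch-princ-series} exactly displays the real/imaginary decomposition of $\infch(\pi_{\sigma,\varphi})$.

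The orthogonality hypothesis of Lemma~\ref{lem-action-of-casimir-and-inf-ch} then follows because $\mathfrak{t}_P \subseteq \mathfrak{k}$ and $\mathfrak{a}_P\subseteq \mathfrak{s}$ are orthogonal under the Cartan decomposition, hence under the form $B$, and the induced sesquilinear inner product on $\operatorname{Hom}_\R(\mathfrak{h},\C)$ inherits this orthogonality between $\operatorname{Hom}_\R(\mathfrak{t}_P,\C)$ and $\operatorname{Hom}_\R(\mathfrak{a}_P,\C)$. The norm identifications $\|\Re(\infch(\pi_{\sigma,\varphi}))\|^2 = \|\infch(\sigma)\|^2$ and $\|\Im(\infch(\pi_{\sigma,\varphi}))\|^2 = \|\varphi\|^2$ are then definitional. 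Substituting into Lemma~\ref{lem-action-of-casimir-and-inf-ch} yields the stated scalar. I do not anticipate a substantive obstacle; the only minor point to be careful about is the sign/normalization in passing between $\varphi$ as a real functional on $\mathfrak{a}_P$ and as an element of $\operatorname{Hom}_\R(\mathfrak{a}_P, i\R)$, which one verifies by tracking the factor of $i$ through the Harish-Chandra isomorphism.
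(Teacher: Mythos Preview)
Your proposal is correct and follows exactly the route the paper indicates: the paper simply declares the lemma to be an immediate consequence of Lemma~\ref{lem-action-of-casimir-and-inf-ch} and Lemma~\ref{lem-inf-ch-princ-series}, and you have merely spelled out the verification of the orthogonality hypothesis and the identification of real and imaginary parts that the paper leaves implicit (the paper notes just after Lemma~\ref{lem-inf-ch-princ-series} that the two summands are precisely the real and imaginary parts).
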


Putting this together with Theorems~\ref{thm-parthasarathy-formula} and \ref{thm-parthasarathy-formula2}, we arrive at the following result (compare \cite[p.562]{NoteWassermann}): 

\begin{theorem}
\label{thm-square-of-principal-series-dirac-ops}
If $\pi_{\sigma,\varphi}$ is any $(P,\sigma)$-principal series representation, and if $S$ is any irreducible spin module, then 
\[
\pushQED{\qed} 
\slashed{D}_{\sigma,\varphi,S}^2 =    \|S\|^2 - \|\infch (\sigma) \|^2  + \| \varphi\|^2  .
\qedhere\popQED
\]
\end{theorem}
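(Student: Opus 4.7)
The plan is to combine the Parthasarathy-type formula of Theorem~\ref{thm-parthasarathy-formula2} with the scalar action of $\pi_{\sigma,\varphi}(\Omega_G)+\|\rho_G\|^2$ given by the preceding Lemma. The essential observation is that Theorem~\ref{thm-parthasarathy-formula2} records an algebraic identity in $\mathcal{U}(\mathfrak{g})\otimes\Cliff(\mathfrak{s})$, obtained from Theorem~\ref{thm-parthasarathy-formula} after using Lemma~\ref{lem-formula-for-delta-term} to evaluate $\Delta(\Omega_K+\|\rho_K\|^2)$ as the scalar $\|S\|^2$ on the $K$-fixed subspace of $C_c^\infty(G)\otimes S$. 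Since the identity takes place in the algebra $\mathcal{U}(\mathfrak{g})\otimes \Cliff(\mathfrak{s})$, it may be applied  under any admissible representation of $G$, in particular under $\pi_{\sigma,\varphi}$, provided  the $K$-fixed part of $H_{\pi,\fin}\otimes S$ is used.

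First I would note that the operator \eqref{eq-Dirac operator} is by construction the image of the algebraic Dirac element $\sum X_a\otimes c(X_a)\in \mathcal{U}(\mathfrak{g})\otimes\Cliff(\mathfrak{s})$ acting on $[\Ind_P^G H_\sigma\otimes S]^K$ via $\pi_{\sigma,\varphi}$.  Consequently, applying $\pi_{\sigma,\varphi}\otimes \mathrm{id}$ to the identity of Theorem~\ref{thm-parthasarathy-formula2}, one obtains
\[
\slashed{D}_{\sigma,\varphi,S}^2 = \|S\|^2 \cdot I \;-\;\bigl(\pi_{\sigma,\varphi}(\Omega_G) + \|\rho_G\|^2\bigr)
\]
on $[\Ind_P^G H_\sigma\otimes S]^K$.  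Note that the formula requires only that $\Delta(\Omega_K+\|\rho_K\|^2)$ act by $\|S\|^2$ on the $K$-fixed subspace, which is a purely $K$-theoretic consequence of Lemma~\ref{lem-formula-for-delta-term} and hence is insensitive to the choice of $G$-representation.

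Next I would invoke the Lemma immediately preceding the theorem, which identifies the scalar by which $\pi_{\sigma,\varphi}(\Omega_G)+\|\rho_G\|^2$ acts on the $K$-finite vectors of the principal series representation. Combining Lemma~\ref{lem-action-of-casimir-and-inf-ch} with Lemma~\ref{lem-inf-ch-princ-series} gives that scalar as $\|\infch(\sigma)\|^2-\|\varphi\|^2$. The hypothesis of Lemma~\ref{lem-action-of-casimir-and-inf-ch} (orthogonality of real and imaginary parts of the infinitesimal character) is automatic here since the $\theta$-stable Cartan $\mathfrak{t}_P\oplus\mathfrak{a}_P$ splits into a compact part, on which $\infch(\sigma)$ lives and is real, and a split part, on which $\varphi$ lives and is imaginary, and these two summands are mutually orthogonal for $\langle\cdot,\cdot\rangle$ by construction. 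Substituting gives
\[
\slashed{D}_{\sigma,\varphi,S}^2 = \|S\|^2 - \bigl(\|\infch(\sigma)\|^2-\|\varphi\|^2\bigr) = \|S\|^2 - \|\infch(\sigma)\|^2 + \|\varphi\|^2,
\]
as required.

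There is no serious obstacle: each ingredient is already in place in the preceding subsections. The only mild subtlety worth spelling out is the verification that $[\Ind_P^G H_\sigma\otimes S]^K$ is contained in the domain on which the algebraic identity may be applied — that is, that these vectors are $K$-finite and smooth under $\pi_{\sigma,\varphi}$ — and this follows from the standard fact that $K$-isotypical subspaces of a principal series are finite-dimensional and contained in the smooth vectors.
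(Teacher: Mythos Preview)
Your proposal is correct and follows precisely the paper's approach: combine the Parthasarathy formula (Theorems~\ref{thm-parthasarathy-formula} and~\ref{thm-parthasarathy-formula2}) with the scalar computation of $\pi_{\sigma,\varphi}(\Omega_G)+\|\rho_G\|^2$ obtained from Lemmas~\ref{lem-action-of-casimir-and-inf-ch} and~\ref{lem-inf-ch-princ-series}. The subtlety you flag---that Lemma~\ref{lem-formula-for-delta-term} must be applied with $C_c^\infty(G)$ replaced by the $K$-finite vectors of $\Ind_P^G H_\sigma$---is exactly the point the paper singles out in the Remark following the theorem, and your justification matches theirs.
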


\begin{remark} Strictly speaking, to reach this conclusion we need the formula 
\[
\Delta \bigl (\Omega_K + \|\rho_K\|^2 \bigr ) = \| S\|^2 \colon 
 [\Ind_P^G H_\sigma \otimes S] ^K
\longrightarrow  [ \Ind_P^G H_\sigma \otimes S] ^K ,
\]
 which is a version of Lemma~\ref{lem-formula-for-delta-term} with $C_c^\infty (G)$ replaced by the $K$-finite vectors in $\Ind_P^G H_\sigma$.  This follows by a verbatim repetition of the proof of Lemma~\ref{lem-formula-for-delta-term}.  
\end{remark}

\begin{corollary}
\label{cor-uniform-admissibility-and-dirac-operator}
For every spin module  $S$, the space  $[\Ind_P^G H_\sigma \otimes S]^K$ is zero for all but finitely many associate classes $[P,\sigma]$.
\end{corollary}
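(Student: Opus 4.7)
The plan is to combine Theorem~\ref{thm-square-of-principal-series-dirac-ops}, which expresses $\slashed{D}_{\sigma,\varphi,S}^2$ as an explicit scalar, with Theorem~\ref{thm-harish-chandra-weak-classification}, which bounds how many discrete series of $M_P$ can have small infinitesimal character, together with the observation that there are only finitely many standard parabolic subgroups $P\subseteq G$. The bridge between these two inputs is a self-adjointness/positivity argument for the Dirac operator on the finite-dimensional multiplicity space $[\Ind_P^G H_\sigma \otimes S]^K$.

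First I would reduce to the case that $S$ is irreducible by decomposing $S$ into its finitely many irreducible spin submodules and observing that the multiplicity space is a direct sum of the corresponding subspaces. Next I would note that $[\Ind_P^G H_\sigma \otimes S]^K$ is finite-dimensional: since $S$ is a finite-dimensional $\tilde K$-representation and the principal series $\Ind_P^G H_\sigma$ is admissible (Harish-Chandra's admissibility of unitarily induced representations from square-integrable data), each $K$-isotype contributes a finite-dimensional piece and only the finitely many $K$-types occurring in $S^*$ contribute at all.

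Next I would verify that the operator $\slashed{D}_{\sigma,0,S}$ of \eqref{eq-Dirac operator} is self-adjoint on this finite-dimensional space. Since the principal series is unitary, $\pi_{\sigma,0}(X_a)$ is skew-adjoint on $\Ind_P^G H_\sigma$ for each $X_a \in \mathfrak{s}$, and $c(X_a)$ is skew-adjoint on $S$ by Definition~\ref{defn:spin module}; hence each summand $\pi_{\sigma,0}(X_a) \otimes c(X_a)$ is self-adjoint. A short commutator computation in $\mathfrak{g}$ and $\Cliff(\mathfrak{s})$ shows that $\slashed{D}_{\sigma,0,S}$ preserves $K$-invariants, so it is a genuine self-adjoint operator on $[\Ind_P^G H_\sigma\otimes S]^K$. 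By Theorem~\ref{thm-square-of-principal-series-dirac-ops} evaluated at $\varphi=0$, its square acts as the scalar $\|S\|^2 - \|\infch(\sigma)\|^2$. Positivity of the square of a self-adjoint operator on a nonzero space therefore forces
\[
\|\infch(\sigma)\|^2 \;\leq\; \|S\|^2
\]
whenever $[\Ind_P^G H_\sigma \otimes S]^K \neq 0$.

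The conclusion is then immediate: there are only finitely many standard parabolic subgroups of $G$ (they correspond to subsets of the simple restricted roots), and for each such $P$ Theorem~\ref{thm-harish-chandra-weak-classification} asserts that only finitely many equivalence classes of irreducible square-integrable representations $\sigma$ of $M_P$ can satisfy $\|\infch(\sigma)\| \leq \|S\|$. This produces only finitely many pairs $(P,\sigma)$ and a fortiori finitely many associate classes $[P,\sigma]$. The only step that requires genuine care -- and is in that sense the main obstacle -- is the self-adjointness/invariance claim for $\slashed{D}_{\sigma,0,S}$ on the multiplicity space; once this is in place, the rest is a packaging of the inputs already assembled in the preceding subsections.
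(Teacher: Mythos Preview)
Your proposal is correct and follows essentially the same route as the paper's proof: evaluate Theorem~\ref{thm-square-of-principal-series-dirac-ops} at $\varphi=0$, use self-adjointness of the Dirac operator to force $\|\infch(\sigma)\|\le\|S\|$ whenever the multiplicity space is nonzero, and then invoke Theorem~\ref{thm-harish-chandra-weak-classification}. You have simply been more explicit than the paper about the reduction to irreducible $S$, the finite-dimensionality of the multiplicity space, the self-adjointness argument, and the finiteness of standard parabolics.
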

 
\begin{proof} 
The formula 
\[
\slashed{D}_{\sigma,0,S}^2 =    \|S\|^2 - \|\infch (\sigma) \|^2    
\]
shows that $\slashed{D}_{\sigma,0,S}^2$ will be negative whenever $\|\infch (\sigma) \|^2> \|S\|^2$, assuming that  $[\Ind_P^G H_\sigma \otimes S]^K$ is nonzero. But the Dirac operator is self-adjoint, so its square   is  positive semidefinite.  So necessarily  $[\Ind_P^G H_\sigma \otimes S]^K$ is zero in these cases.  The corollary now follows from  
Theorem~\ref{thm-harish-chandra-weak-classification}.
\end{proof}

\subsection*{Dirac Operator from a Functional Analytic Point of View }

The Dirac operator $\slashed{D}_S$ can be viewed as an unbounded operator on the Hilbert space  $[L^2(G)\otimes S] ^K$ with domain  $[C_c^\infty(G)\otimes   S  ] ^K$. The Dirac operator so viewed is essentially self-adjoint (see \cite{Chernoff73}), and there is therefore an associated one-parameter  group of unitary operators $\exp( i t \slashed{D}_S)$.  These restrict to operators 
\begin{equation}
    \label{eq-finite-propagation-unitaries}
\exp( i t \slashed{D}_S) \colon [C_c^\infty(G)\otimes   S  ] ^K
\longrightarrow
[C_c^\infty(G)\otimes   S  ] ^K
\end{equation}
(see \cite{Chernoff73} again); this is the finite propagation property of the Dirac operator.  
 
But we are more interested in viewing $\slashed{D}_S$ as an unbounded operator on the space  $[\Csr  (G)\otimes S ] ^K$, which becomes a   \emph{Hilbert $C^*$-module} over $C^*_r (G)$ when equipped with the right action of $C^*_r (G)$ on the first factor and the $C^*_r (G)$-valued inner product
  \[
 \langle f _1\otimes s_1  , f_2\otimes s_2  \rangle = f_1^*f_2 
  \langle   s_1, s_2  \rangle .
 \]
See \cite{Lance} for general information about Hilbert $C^*$-modules. 
The   operators $\exp( i t \slashed{D}_S)$ in \eqref{eq-finite-propagation-unitaries} extend to a one-parameter   group of unitary operators 
\begin{equation*}
    \label{eq-finite-propagation-unitaries2}
\exp( i t \slashed{D}_S) \colon [C^*_r(G)\otimes   S  ] ^K
\longrightarrow
[C^*_r(G)\otimes   S  ] ^K ,
\end{equation*}  
and the generator of this one-parameter group is a regular and self-adjoint operator on the Hilbert module $[C^*_r(G)\otimes   S  ] ^K$  in the sense of \cite[Ch.~9]{Lance}.  We shall use the same notation $\slashed{D}_S$ for the extension.

\begin{definition}
The \emph{bounded transform} of $\slashed{D}_S$ is the operator 
 \[
 \slashed{F}_S = \slashed{D}_S (I + \slashed{D}_S^2)^{-1/2} \colon  [\Csr  (G)\otimes S ] ^K \longrightarrow  [\Csr  (G)\otimes S ] ^K 
 \]
that is defined using the functional calculus for regular self-adjoint Hilbert module operators.
 \end{definition}

As in the previous section, given a  $(P,\sigma)$-principal series representation 
\[
\pi_{\sigma, \varphi}\colon 
G  \longrightarrow  U\bigl ( \Ind_P^G H_\sigma\bigr ) 
\]
we may form the operator
\[
\slashed{D}_{\sigma,\varphi,S}  = \sum \pi_{\sigma, \varphi}(X_a) \otimes c(X_a)\colon [\Ind_P^G H_\sigma \otimes S] ^K
\longrightarrow  [ \Ind_P^G H_\sigma \otimes S] ^K ,
\]
with $\{ X_a\}$ an orthonormal basis for $\mathfrak{s}$, as usual, and then its bounded transform\footnote{Of course, the operator $\slashed{D}_{\sigma,\varphi,S}$ is acting on a finite-dimensional Hilbert space, and  is therefore already bounded itself. }
\[
\slashed{F} _{\sigma, \varphi, S} = \slashed{D}_{\sigma,\varphi,S} ( I + \slashed{D}_{\sigma,\varphi,S} ^2 ) ^{-1/2}
\colon [\Ind_P^G H_\sigma \otimes S] ^K
\longrightarrow  [ \Ind_P^G H_\sigma \otimes S] ^K .
\]

\begin{lemma} 
\label{lem-formula-for-f-sigma-nu}
Under isomorphism of Hilbert modules \[
 \bigl [ C^*_r (G) \otimes S \bigr ]^K \cong \bigoplus _{[P,\sigma]} \bigl [C_0\bigl (\mathfrak{a}_P^*, \mathfrak{K}(\Ind_P^G H_\sigma)\bigr )^{W_\sigma}
 \otimes S \bigr ]^K 
 \]
associated with the $C^*$-algebra   isomorphism of \textup{Theorem~\ref{thm-CsrG}}, the operator 
\[
\slashed{F}_S \colon \bigl [ C^*_r (G) \otimes S \bigr ]^K  
\longrightarrow \bigl [ C^*_r (G) \otimes S \bigr ]^K 
\]
acts as 
\[
f\otimes s  \longmapsto \Bigl [ \varphi \mapsto \slashed{F}_{\sigma ,\varphi,S} \cdot  ( f(\varphi)\otimes s) \Bigl ]
\]
for all  $f\in C_0  (\mathfrak{a}_P^*, \mathfrak{K}(\Ind_P^G H_\sigma)  )^{W_\sigma} $  and all $s\in S$, where the product $\cdot$ on the right hand side is composition of linear  operators on the finite-dimensional space $[\Ind _P^G H_\sigma \otimes S]^K$. 
\end{lemma}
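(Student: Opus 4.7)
The plan is to reduce to a fiberwise identity and then to appeal to the naturality of the continuous functional calculus on Hilbert $C^*$-modules. First I will show that the unbounded Dirac operator $\slashed{D}_S$ on $[C^*_r(G)\otimes S]^K$, viewed through the decomposition of Theorem~\ref{thm-CsrG}, acts on each summand by pointwise left composition with $\slashed{D}_{\sigma,\varphi,S}$; the bounded transform formula in the statement will then follow by applying the continuous function $x\mapsto x(1{+}x^2)^{-1/2}$ fiberwise.

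For the fiberwise description of $\slashed{D}_S$, I will differentiate the identity $\pi_\sigma(L_g f)(\varphi)=\pi_{\sigma,\varphi}(g)\,\pi_\sigma(f)(\varphi)$ in the direction of $X\in\mathfrak{g}$ to see that, under the isomorphism of Theorem~\ref{thm-CsrG}, the left $\mathfrak{g}$-action on $C_c^\infty(G)\subset C^*_r(G)$ corresponds to pointwise left multiplication by $\pi_{\sigma,\varphi}(X)$ on each summand. Combining this with the identification
\[
\bigl [\mathfrak{K}(\Ind_P^G H_\sigma)\otimes S\bigr ]^K \;\cong\; \mathfrak{K}\bigl (\Ind_P^G H_\sigma,\;[\Ind_P^G H_\sigma\otimes S]^K\bigr ),
\]
which one obtains by checking that the $K$-fixedness condition in the correspondence $T\otimes s\leftrightarrow(v\mapsto Tv\otimes s)$ forces the image to lie in the finite-dimensional $K$-fixed subspace, and using the $K$-equivariance of $\sum X_a\otimes c(X_a)$, one gets the asserted pointwise-composition formula, with $\slashed{D}_{\sigma,\varphi,S}$ in place of $\slashed{F}_{\sigma,\varphi,S}$, on the dense subspace $[C_c^\infty(G)\otimes S]^K$.

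To pass to the bounded transform, I will use that each evaluation $\mathrm{ev}_\varphi$ is a $*$-homomorphism from the summand $C_0(\mathfrak{a}_P^*,\mathfrak{K}(\Ind_P^G H_\sigma))^{W_\sigma}$ onto $\mathfrak{K}(\Ind_P^G H_\sigma)$, and induces a bounded morphism of the associated $K$-fixed Hilbert modules that intertwines $\slashed{D}_S$ with the finite-dimensional operator $\slashed{D}_{\sigma,\varphi,S}$ acting by left composition. Naturality of the Borel functional calculus for regular self-adjoint operators on Hilbert $C^*$-modules (see \cite[Ch.~9]{Lance}) then transports this intertwining to the bounded transforms, yielding the formula in the lemma.

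The main technical obstacle will be establishing that $\slashed{D}_S$ is regular and self-adjoint on the Hilbert module $[C^*_r(G)\otimes S]^K$ with $[C_c^\infty(G)\otimes S]^K$ as a core---a statement stronger than essential self-adjointness on $[L^2(G)\otimes S]^K$. The cleanest route is to use the finite-propagation property~\eqref{eq-finite-propagation-unitaries}: because $\exp(it\slashed{D}_S)$ preserves $C_c^\infty$-supports with controlled growth, it extends by continuity to a strongly continuous one-parameter unitary group on the Hilbert module, whose generator is then automatically regular and self-adjoint on the stated core. Once this is in place, the comparison of the Hilbert-module resolvent with the fiberwise resolvent---and hence of the bounded transform---is routine.
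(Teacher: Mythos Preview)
Your proposal is correct and follows essentially the same strategy as the paper: verify the fiberwise identity on the dense subspace $[C_c^\infty(G)\otimes S]^K$ and then pass to the bounded transform via the functional calculus, with regularity and self-adjointness coming from the finite-propagation unitary group. The only cosmetic difference is that the paper routes the argument through the one-parameter group $\exp(it\slashed{D}_S)$ (checking directly that it acts fiberwise as $\exp(it\slashed{D}_{\sigma,\varphi,S})$ on $[C_c^\infty(G)\otimes S]^K$, and deducing the formula for $\slashed{F}_S$ from that), whereas you route it through the evaluation $*$-homomorphisms and naturality of the functional calculus; these are equivalent packagings of the same idea.
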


\begin{proof}
The analogous result for $\exp (it \slashed{D}_S)$ is readily verified on $[C_c^\infty (G)\otimes S]^K$, and the stated result follows from this.
\end{proof}

See \cite[Ch.~1]{Lance} for the meaning of \emph{compact} in  following fundamental result:

\begin{theorem} 
The operator 
\[
I-\slashed{F}_S^2  = (I + \slashed{D}_S^{2})^{-1}
\]
is a compact operator
on the Hilbert module $[C^*_r (G)\otimes S]^K$.
\end{theorem}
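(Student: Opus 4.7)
My strategy is to use the $C^*$-algebra decomposition of Theorem~\ref{thm-CsrG} to split $[C^*_r(G) \otimes S]^K$ as an orthogonal direct sum of Hilbert submodules
\[
E_{P,\sigma} := \bigl[C_0(\mathfrak{a}_P^*, \mathfrak{K}(\Ind_P^G H_\sigma))^{W_\sigma} \otimes S\bigr]^K
\]
indexed by associate classes $[P,\sigma]$, and then to establish compactness on each summand separately. Lemma~\ref{lem-formula-for-f-sigma-nu} guarantees that $\slashed{F}_S$, and hence $I - \slashed{F}_S^2$, respects this decomposition, while Corollary~\ref{cor-uniform-admissibility-and-dirac-operator} forces $E_{P,\sigma} = 0$ for all but finitely many classes $[P,\sigma]$. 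Thus the operator $I - \slashed{F}_S^2$ is a finite direct sum of its restrictions to the nonzero summands, and its compactness on the whole module reduces to compactness on each.

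On each nonzero summand, Lemma~\ref{lem-formula-for-f-sigma-nu} identifies $I - \slashed{F}_S^2$ with the pointwise operator $\varphi \mapsto (I + \slashed{D}_{\sigma,\varphi,S}^2)^{-1}$. By Theorem~\ref{thm-square-of-principal-series-dirac-ops}, $\slashed{D}_{\sigma,\varphi,S}^2$ acts on the finite-dimensional space $[\Ind_P^G H_\sigma \otimes S]^K$ as the scalar $\|\varphi\|^2 + \|S\|^2 - \|\infch(\sigma)\|^2$, whose constant term is nonnegative on nonzero summands (by the self-adjointness observation used in the proof of Corollary~\ref{cor-uniform-admissibility-and-dirac-operator}). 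Consequently $I - \slashed{F}_S^2$ on $E_{P,\sigma}$ becomes scalar multiplication by the $W_\sigma$-invariant $C_0$-function
\[
g_{P,\sigma}(\varphi) = \bigl(1 + \|\varphi\|^2 + \|S\|^2 - \|\infch(\sigma)\|^2\bigr)^{-1}.
\]

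It remains to check that scalar multiplication by such a $C_0$-function is a compact Hilbert-module operator on $E_{P,\sigma}$. For this I would pass to the Morita equivalence of Theorem~\ref{th-main-Morita}, which replaces the acting algebra $C_0(\mathfrak{a}_P^*, \mathfrak{K}(\Ind_P^G H_\sigma))^{W_\sigma}$ by the more transparent $C_0(\mathfrak{a}_P^*/W'_\sigma) \rtimes R_\sigma$; under the corresponding transport of Hilbert modules, $E_{P,\sigma}$ becomes the module of $C_0$-sections of a finite-rank $R_\sigma$-equivariant vector bundle over $\mathfrak{a}_P^*/W'_\sigma$ whose fibers are (essentially) copies of $[\Ind_P^G H_\sigma \otimes S]^K$. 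Finite-dimensionality of the fibers is exactly the admissibility of the principal series. Scalar multiplication by a $C_0$-function on a module of $C_0$-sections of a finite-rank bundle is a classical compact operator, realized as a norm limit of finite-rank Hilbert-module endomorphisms via a partition of unity on the base and local frames. I expect the main obstacle to be unpacking the Morita correspondence concretely enough to confirm the finite-rank bundle description of $E_{P,\sigma}$; once this is in hand, compactness of multiplication by a $C_0$-function is a standard fact from the theory of Hilbert modules over commutative $C^*$-algebras.
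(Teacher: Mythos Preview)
Your proposal is correct and follows essentially the same architecture as the paper's proof: decompose $[C^*_r(G)\otimes S]^K$ via Theorem~\ref{thm-CsrG}, invoke Corollary~\ref{cor-uniform-admissibility-and-dirac-operator} to reduce to finitely many summands, and use Lemma~\ref{lem-formula-for-f-sigma-nu} together with Theorem~\ref{thm-square-of-principal-series-dirac-ops} to identify $I-\slashed{F}_S^2$ on each summand with multiplication by a $C_0$ scalar function.

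The only divergence is in the last step. You propose to transport $E_{P,\sigma}$ through the Morita equivalence of Theorem~\ref{th-main-Morita} in order to see it as $C_0$-sections of a finite-rank bundle; the paper instead asserts compactness directly from the finite-dimensionality of $[\Ind_P^G H_\sigma\otimes S]^K$. The direct argument is shorter and avoids the bookkeeping of tracking a specific operator through a Morita bimodule: since the $K$-action on $\mathfrak{K}(H)\otimes S$ is by left multiplication on the first factor and the given action on $S$, one has $[\mathfrak{K}(H)\otimes S]^K\cong\mathfrak{K}\bigl(H,[H\otimes S]^K\bigr)$, so $E_{P,\sigma}$ already is a space of $C_0$-functions with values in compact operators into a finite-dimensional target, and multiplication by a $C_0$ scalar is then visibly a norm-limit of finite-rank module maps. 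Your Morita route reaches the same conclusion but is an unnecessary detour.
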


 We shall prove this using the representation theory calculations from the  previous section, since those results are at hand.  The original proof, due to \cite{Kasparov83}, uses the basic elliptic estimates for the Dirac operator and the Rellich lemma. See \cite{BCH} for a general account of these matters.

\begin{proof} 
The formula for $\slashed{F}_S$ in Lemma~\ref{lem-formula-for-f-sigma-nu} and the formula for $\slashed{D}_{S,\sigma,\varphi}$ in Theorem~\ref{thm-square-of-principal-series-dirac-ops} combine to give a formula for $1{-}\slashed{F}_S^2$ as an operator on 
\[
 \bigl [ C^*_r (G) \otimes S \bigr ]^K \cong \bigoplus _{[P,\sigma]} \bigl [C_0\bigl (\mathfrak{a}_P^*, \mathfrak{K}(\Ind_P^G H_\sigma)\bigr )^{W_\sigma}
 \otimes S \bigr ]^K 
 \]
 The direct sum here is actually a finite direct sum, in view of Corollary~\ref{cor-uniform-admissibility-and-dirac-operator}, and in each summand $1{-}\slashed{F}_S^2$ acts as multiplication by a $C_0$-scalar-valued function.  Each such operator is compact, thanks to the finite-dimensionality of the spaces $[\Ind_P^G H_\sigma\otimes S]^K$.
\end{proof}

Now the compact operators on any Hilbert $C^*$-module form an ideal in the $C^*$-algebra of all bounded, adjointable operators, and by definition a bounded adjointable operator is \emph{Fredholm} if it is invertible modulo this ideal.  In the present case, we see from the theorem above that $\slashed{F}_S$ is its own inverse, modulo compact operators.  Therefore: 

\begin{corollary} 
The operator 
\[
\slashed{F}_S \colon \bigl [ C^*_r (G)\otimes S\bigr ] ^K \longrightarrow \bigl [ C^*_r (G)\otimes S\bigr ] ^K
\]
is a bounded, self-adjoint, odd-graded, Fredholm operator on the $\Z_2$-graded Hilbert $C^*_r (G)$-module $[C^*_r (G)\otimes S]^K$. \qed
\end{corollary}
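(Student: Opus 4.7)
The plan is to derive the four claimed properties of $\slashed{F}_S$ from the preceding theorem (compactness of $I-\slashed{F}_S^{\,2}$) together with the functional calculus for regular self-adjoint operators on Hilbert $C^*$-modules \cite[Chs.~9--10]{Lance}. The key observation is that the function $\phi(x) = x(1+x^2)^{-1/2}$ is a bounded, continuous, real-valued, odd function on $\R$ with $\|\phi\|_\infty \le 1$, and by definition $\slashed{F}_S = \phi(\slashed{D}_S)$.

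First I would handle boundedness, self-adjointness, and the odd grading. Since $\slashed{D}_S$ is regular and self-adjoint on $[\Csr(G)\otimes S]^K$, the functional calculus turns any bounded continuous real-valued function on $\R$ into a bounded self-adjoint adjointable operator on the module. Applied to $\phi$, this gives at once that $\slashed{F}_S$ is bounded (with norm at most one) and self-adjoint. For the odd grading, each summand of $\slashed{D}_S = \sum X_a \otimes c(X_a)$ has odd grading degree (the Clifford action is degree one), so writing $\varepsilon$ for the grading involution one has $\varepsilon\slashed{D}_S \varepsilon = -\slashed{D}_S$; since $\phi$ is odd, the functional calculus then yields $\varepsilon\slashed{F}_S \varepsilon = -\slashed{F}_S$, as desired.

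For the Fredholm property, I would use the spectral identity
\[
I - \slashed{F}_S^{\,2} \;=\; (I + \slashed{D}_S^{\,2})^{-1},
\]
which was already invoked in the preceding theorem and which tells us that $I - \slashed{F}_S^{\,2}$ is compact on the Hilbert module $[\Csr(G)\otimes S]^K$. Combined with the self-adjointness established above, this says $\slashed{F}_S \cdot \slashed{F}_S^{\,*} = \slashed{F}_S^{\,2} \equiv I$ modulo the ideal $\Compact([\Csr(G)\otimes S]^K)$, so the image of $\slashed{F}_S$ in the Calkin $C^*$-algebra is invertible (it is its own inverse), and hence $\slashed{F}_S$ is Fredholm in the sense of \cite[Ch.~1]{Lance}.

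I do not expect a real obstacle: all of the analytic difficulty was absorbed into the preceding compactness theorem, and what remains is a formal manipulation with the functional calculus. The only small point deserving care is to invoke the correct version of the functional calculus for \emph{unbounded} regular self-adjoint operators on Hilbert $C^*$-modules, rather than its more familiar bounded counterpart.
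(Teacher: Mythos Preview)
Your proposal is correct and matches the paper's own argument essentially verbatim: the paper treats the corollary as immediate from the preceding compactness theorem, noting explicitly that $\slashed{F}_S$ is its own inverse modulo compacts and hence Fredholm, while the bounded, self-adjoint, and odd-graded properties are taken as evident from the functional calculus construction. You have simply spelled out those latter points in slightly more detail than the paper does.
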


\subsection*{The Connes-Kasparov Index Homomorphism}

In order to define the index homomorphism it is convenient to use Kasparov's approach of $C^*$-algebra $K$-theory \cite{Kasparov_Kfunctor} (see \cite[Sec. 3]{KK_primer} for an exposition). 

Kasparov  \emph{defines}  the $K_0$-group of a $C^*$-algebra $A$ as the group of homotopy classes of bounded, self-adjoint, odd-graded, Fredholm operators $F$ on $\Z_2$-graded Hilbert $A$-modules.
In addition, he defines the $K_1$-group in the same way, except that the Hilbert $A$-module $\mathcal{E}$ on which $F$ acts is required to  carry an additional odd-graded skew-symmetry 
\begin{equation}
    \label{eq-odd-grading-symmetry}
\begin{gathered}
\gamma\colon \mathcal{E} \longrightarrow \mathcal{E},\\
  \gamma^* = - \gamma,\quad \gamma^2 =-1
\end{gathered}
\end{equation}
that  anti-commutes with $F$. 

\begin{remark}
We note for later use that, as a consequence of the way homotopy is defined, it is an elementary property of $K$-theory that if a Fredholm operator is actually invertible (not merely invertible modulo compact operators), then it determines the zero class in $K$-theory.
\end{remark}

Kasparov's definitions are made with Dirac operators in mind, and it follows immediately from the definitions and the results we have summarized above that  if $S$ is any spin module, then the Fredholm operator $\slashed{F}_S$ determines a class
 \[
 \Index ( \slashed{F}_S) \in  K_{\dim(G/K)}(\Csr  (G)) 
 \]
 (in the case where $\dim(G/K)$ is odd, the skew-symmetry $\gamma$ is Clifford multiplication by the generator in $\Cliff (\mathfrak{s}\oplus \R)$ associated to the $\R$-summand).
 
 \begin{definition} The \emph{Connes-Kasparov index homomorphism} is the homomorphism of abelian groups 
 \[
 R_{\spin}(K,\mathfrak{s}) \longrightarrow K_{\dim(G/K)} (C^*_r (G))
 \]
that maps the class of a spin module $S$ to the index of  $\slashed{F}_S$ in $K$-theory.
 \end{definition}

Our aim is to prove that: 

\begin{theorem}[Connes-Kasparov Isomorphism]\label{thm-connes-kasparov-isomorphism} If $G$ is a connected, linear real reductive Lie group, then  Connes-Kasparov index map 
\[
  R_{\mathrm{spin}}(K,\mathfrak{s}) \longrightarrow K_{\dim(G/K)}(\Csr  (G))
\]
is an isomorphism of abelian groups. Moreover $K_{\dim(G/K)+1}(\Csr  (G)) =0$.
\end{theorem}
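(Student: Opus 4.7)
The plan is to combine the $K$-theory computation of $\Csr(G)$ from Theorem~\ref{thm-k-theory-c-star-g-summarized} with the matching theorem. The target $K_{\dim(G/K)}(\Csr(G))$ is free abelian on the set of essential associate classes $[P,\sigma]$, and $K_{\dim(G/K)+1}(\Csr(G))$ already vanishes by that same theorem, so the second assertion of the statement is settled outright. On the source side, $R_{\spin}(K,\mathfrak{s})$ is free abelian on isomorphism classes of irreducible genuine representations of $\tilde K$. It therefore suffices to show that the Connes-Kasparov map sends one $\Z$-basis, up to signs, bijectively onto the other.

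To compute $\Index(\slashed{F}_S)$ componentwise, I would invoke Lemma~\ref{lem-formula-for-f-sigma-nu} to write $\slashed{F}_S$ on each summand as the fiberwise family $\varphi \mapsto \slashed{F}_{\sigma,\varphi,S}$ of finite-dimensional operators on $[\Ind_P^G H_\sigma \otimes S]^K$; Corollary~\ref{cor-uniform-admissibility-and-dirac-operator} reduces this to finitely many summands. Inessential summands contribute zero because their $K$-theory vanishes. For an essential summand $[P,\sigma]$, the formula
\[
\slashed{D}_{\sigma,\varphi,S}^2 = \|S\|^2 - \|\infch(\sigma)\|^2 + \|\varphi\|^2
\]
of Theorem~\ref{thm-square-of-principal-series-dirac-ops} shows that $\slashed{F}_{\sigma,\varphi,S}$ is invertible whenever $\|\varphi\|^2 > \|\infch(\sigma)\|^2 - \|S\|^2$, so the index class is compactly supported and detectable through the restriction-projection morphism of Theorem~\ref{thm-k-theory-0f-essential-component2} to the single $\Z$-summand of that component.

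The matching theorem (Theorem~\ref{thm-matching}) then supplies the decisive combinatorial input: the assignment $S \mapsto \Index(\slashed{F}_S)$ realizes a sign-compatible bijection between the two free $\Z$-bases described above. Granted this, the Connes-Kasparov map is a signed permutation of bases, hence an isomorphism. As an alternative I would sketch how Kasparov's $KK$-theoretic Dirac--dual-Dirac machinery allows one to replace the componentwise fiberwise calculation with an abstract Fredholm index identification, still drawing on the matching theorem to certify that the predicted generators actually occur.

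The main obstacle is of course the matching theorem itself, which is deferred to the sequel. The difficulty is to produce a natural sign-compatible bijection between two very differently-described indexing sets---irreducible genuine representations of $\tilde K$ on one side, essential associate classes of principal series on the other---and the sequel handles this through Vogan's construction of the tempered dual, which parametrizes precisely the essential components by the same Clifford-theoretic data used to build indecomposable Dirac operators.
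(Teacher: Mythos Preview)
Your outline has the right architecture but skips the one step that actually requires work beyond the matching theorem. You correctly reduce to showing that the index map carries the basis $\{[S]\}$ of $R_{\spin}(K,\mathfrak{s})$ to the basis of $K_{\dim(G/K)}(\Csr(G))$ indexed by essential $[P,\sigma]$, and you correctly observe (via Lemma~\ref{lem-vanishing-index-when-unmatched}, whose content you reproduce) that $\Index_{[P,\sigma]}(\slashed{D}_S)=0$ when $S$ and $[P,\sigma]$ are unmatched. But the matching theorem is only a bijection between \emph{index sets}: it tells you that each irreducible $S$ is matched to exactly one essential $[P,\sigma]$, so that $\Index(\slashed{F}_S)$ lands in a single $\Z$-summand. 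It does \emph{not} tell you that this element is a generator rather than, say, zero or twice a generator. Your sentence ``the assignment $S\mapsto\Index(\slashed{F}_S)$ realizes a sign-compatible bijection between the two free $\Z$-bases'' asserts exactly this missing conclusion without justification.

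The paper closes this gap in two distinct ways, and you need one of them. In the first proof, Kasparov's split-injectivity theorem is not an optional alternative but the essential ingredient: since the index map is split injective and the image of $[S]$ lies in a single copy of $\Z$, that image must be $\pm 1$ times the generator. You relegate the Dirac--dual-Dirac machinery to a parenthetical ``alternative,'' which inverts its role. In the second proof, split injectivity is avoided, but at the cost of substantially more representation theory: one must invoke Vogan's minimal $K$-type theory (Theorem~\ref{thm-minimal-k-types}) to compute $\dim[X_\mu\otimes S]^K$ exactly, verify that the Dirac family $\varphi\mapsto\slashed{D}_{\sigma,\varphi,S}$ restricted to $\mathfrak{a}_P^{*,R_\sigma}$ and compressed to a single irreducible summand $X_\mu$ is literally a Bott element (Theorem~\ref{thm-index-is-bott-element}), and then identify this with the image under the restriction-projection isomorphism of Theorem~\ref{thm-k-theory-0f-essential-component2}. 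Your phrase ``detectable through the restriction-projection morphism'' gestures toward this second route but supplies none of the needed dimension count or Bott-element verification.
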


 \begin{remarks}   
\label{rem-on-generalizations-of-connes-kasparov}
There is a Connes-Kasparov index homomorphism for any almost-con\-nected Lie group (that is, any Lie group with finitely many connected components) and moreover it is an isomorphism in this generality \cite{ChabertEchterhoffNest}.  The definition of the index homomorphism for connected Lie groups is essentially the same as the one we have presented.
But beyond connected groups, and  even within the realm of real reductive groups, the definition of the index homomorphism needs to be adjusted \cite{EchterhoffPfante}.   Among other things it is possible   that both $K$-theory groups for $C^*_r (G)$ might be nonzero at the same time, as is the case for $GL(2,\R)$, for instance.   
\end{remarks}

\section{The Matching Theorem}

In this section we shall state a purely representation-theoretic result that will lead quickly (in the next section) to a proof that the Connes-Kasparov index homomorphism is an isomorphism.

\subsection*{Statement of the Matching Theorem}

 \begin{definition}
 \label{def-matching}
 We shall say that an irreducible spin module $S$ for $(K,\mathfrak{s})$ and an associate class  $[P,\sigma]$ are \emph{matched} if 
 \begin{enumerate}[\rm (i)]
 
 \item the space 
 $[ \Ind_P^G H_\sigma \otimes S ]^K$ is nonzero, and 
 
  \item
the Dirac operator $\slashed{D}_{\sigma,0,S}$ vanishes on      $[ \Ind_P^G H_\sigma \otimes S ]^K$.
\end{enumerate}
 \end{definition}

\begin{remark}
\label{rem-norm-of-mod-s}
It follows from Theorem~\ref{thm-square-of-principal-series-dirac-ops} and the fact that the Dirac operator is self-adjoint that the second condition in the definition above is equivalent to the identity  $\|S\| = \|\infch (\sigma)\|$.
 \end{remark}
 
The result that we shall use to establish the Connes-Kasparov isomorphism is as follows:

\begin{theorem}[Matching Theorem]\label{thm-matching}
Let $G$ be a connected linear real reductive group. 
\begin{enumerate}[\rm (i)]

\item For every essential associate class $[P,\sigma]$ there is a unique irreducible spin module $S$ to which $[P,\sigma]$ is matched.
\item For every irreducible spin module $S$ there is a unique essential associate class $[P,\sigma]$  to which $S$ is matched.
\end{enumerate}
\end{theorem}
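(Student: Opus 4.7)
The plan is to reduce the matching theorem to an explicit combinatorial bijection between weight-data parametrizing each side, then to verify that this bijection preserves the matching relation. First I would unpack the two conditions in Definition~\ref{def-matching}. By Theorem~\ref{thm-square-of-principal-series-dirac-ops} and self-adjointness of the Dirac operator, the vanishing of $\slashed{D}_{\sigma,0,S}$ on $[\Ind_P^G H_\sigma \otimes S]^K$ is equivalent to the scalar equality $\|S\| = \|\infch(\sigma)\|$, as noted in Remark~\ref{rem-norm-of-mod-s}. By Frobenius reciprocity, the nonvanishing condition $[\Ind_P^G H_\sigma \otimes S]^K \neq 0$ is equivalent to $\Hom_{K\cap M_P}(S^*|_{K\cap M_P}, H_\sigma) \neq 0$, which is a branching statement between the representation theory of $K$ and that of the smaller group $M_P$.

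Next I would parametrize both sides explicitly. Irreducible spin modules correspond via $S\mapsto\module(S)$ to irreducible genuine representations of $\tilde K$, and these are parametrized by their highest weights $\mu$ relative to a chosen maximal torus $T{\subseteq} K$ and system of positive roots; here $\|S\| = \|\mu+\rho_K\|$. On the representation-theoretic side, I would invoke Vogan's approach, as suggested in the introduction: Vogan's classification attaches to each tempered irreducible a minimal $K$-type, and the essential associate classes (those with $W_\sigma' = \{e\}$) should correspond precisely to Vogan data whose associated real root system is empty, which cuts out a family of dominant $K$-weights that satisfy a suitable non-singularity condition. The norm-equality in Remark~\ref{rem-norm-of-mod-s} combined with Harish-Chandra's formula for the infinitesimal character of a discrete series of $M_P$ then provides the bridge: both sides get encoded by the same weight $\mu+\rho_K$ on a common Cartan subalgebra.

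The main body of the proof would proceed in two directions. Given an essential class $[P,\sigma]$, one constructs a candidate spin module $S$ by extracting the $\rho_K$-shifted version of the minimal $K$-type of a summand of $\pi_{\sigma,0}$, verifies the norm identity from Harish-Chandra's parameter for $\sigma$ as a discrete series of $M_P$, and then checks (b) by restricting $\module(S)^*\otimes S_{\text{irr}}$ to $K\cap M_P$ and recognizing a copy of the minimal $(K\cap M_P)$-type of $\sigma$. Conversely, given an irreducible spin module $S$, Theorem~\ref{th-R-group3} and Theorem~\ref{th-W-prime-group} together with Vogan's classification should uniquely determine a Cartan subgroup and a discrete series $\sigma$ of the associated $M_P$ whose Harish-Chandra parameter matches the highest weight of $\module(S)$, and essentiality will follow from non-singularity of that weight.

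The hardest part, which I expect to be the main obstacle, is establishing the uniqueness clauses in both directions and showing that the construction truly lands in essential components and not in inessential ones. This requires a careful bookkeeping of $\rho$-shifts and choices of positive root systems as one passes between the maximally compact Cartan subalgebra $\mathfrak{t}_P\oplus\mathfrak{a}_P$ used in Lemma~\ref{lem-inf-ch-princ-series} and the compact Cartan $\mathfrak{t}$ of $\mathfrak{k}$ that is used to parametrize spin modules. A second difficulty is that the Vogan-style construction of essential components (promised for the sequel) must be shown to exhaust \emph{all} essential classes and not merely produce a subset; I would address this by using Theorem~\ref{thm-k-theory-c-star-g-summarized} and a counting/index-theoretic argument to close the loop once the injectivity of the matching map in both directions is established.
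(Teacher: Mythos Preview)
The paper does not prove this theorem. Immediately after the statement it says: ``We shall prove this in a separate article \cite{MatchingTheorem} using a number of important (and quite difficult) results of Vogan from \cite{Voganbook}.'' The remainder of the section gives only examples, and the theorem is then used as a black box in Sections~\ref{sec-first-proof} and~\ref{sec-second-proof}. So there is no in-paper proof to compare your proposal against.

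That said, your outline is broadly consistent with what the paper announces for the sequel: the use of Vogan's minimal $K$-type theory, the reduction of condition (ii) to the norm equality via Remark~\ref{rem-norm-of-mod-s}, and the Frobenius-reciprocity reformulation of condition (i) are all reasonable first moves. The paper's forward references (Theorem~\ref{thm-minimal-k-types}, and the remark that Theorem~\ref{th-R-group3} will be proved in the sequel via Vogan's $R$-group) confirm that the heavy lifting is indeed representation-theoretic in the way you describe.

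One caution about your final paragraph: invoking Theorem~\ref{thm-k-theory-c-star-g-summarized} together with an ``index-theoretic argument'' to establish exhaustion risks circularity. The $K$-theory computation in Theorem~\ref{thm-k-theory-c-star-g-summarized} is indeed independent of the matching theorem, but to turn a count of $K$-theory generators into a count of essential classes matched to spin modules you would need to know that the Connes-Kasparov index of $\slashed{D}_S$ lands nontrivially in the matched component---and in this paper that fact (Lemma~\ref{lem-vanishing-index-when-unmatched} plus nontriviality) is derived \emph{from} the matching theorem, not used to prove it. The sequel evidently proves exhaustion and uniqueness by purely representation-theoretic means (Vogan's classification), without appealing to $K$-theory.
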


We shall prove this in a separate article \cite{MatchingTheorem} using a number of important (and quite difficult) results of Vogan from 
\cite{Voganbook}.  But let us give some examples.

\begin{example}
\label{ex-discrete-series-and-h-c-parameters}
If $\sigma$ is an irreducible square-integrable  representation of $G$, then it is an essential component all by itself, labelled by the associate class $[G,\sigma]$. The irreducible spin module matched to $[G,\sigma]$ is   the unique one, up to not-necessarily-grading-preserving isomorphism, for which 
\[
[H\otimes S]^K \ne 0.
\]
Moreover if  $\mu$ is  the highest weight of the irreducible and genuine representation $\module (S)$ of $\widetilde K$, then $\mu{+}\rho_K$   is the so-called \emph{Harish-Chandra parameter} of $\sigma$. Compare \cite[Thm. 9.3]{AtiyahSchmid} or  \cite[\S2]{LafforgueICM}.
\end{example}

\begin{example}
The reduced C*-algebra for $G{=}{SL}(2,\R)$  was mostly described in \cite[Ex. 6.10]{CCH1} (and in several earlier works). The decomposition of the intertwining groups $W_\sigma$ as semidirect products $W'_\sigma\rtimes R_\sigma$ was not discussed there, but one may determine by direct computation  that the only inessential component in the tempered dual  is the spherical principal series component. The essential associate classes are therefore of two types: the discrete series and the odd principal series.

If $[P,\sigma]$ is a discrete series component with Harish-Chandra parameter $n\in \Z$, $n{\ne }0$, then the matching spin module $S_n$  is  
\[
S_n=S_{\text{irr}}\otimes\C_{n},
\] 
where $\C_{n}$ denotes  the weight-$n$  irreducible representation of $\operatorname{SO}(2)$, viewed as a genuine representation of $\widetilde K \cong K{\times} \Z_2$.  This may be computed directly, but the result is in line with Example~\ref{ex-discrete-series-and-h-c-parameters} above. 
If $[P,\sigma]$ is the odd principal series component of the tempered dual, then  the matching spin module
is $S_0=S_{\text{irr}}\otimes\C_{0}$; 
the matching conditions in Definition~\ref{def-matching} may again be checked by direct computation.   
\end{example}
 
\begin{example}
If $G$ is a complex reductive group, then all essential associate classes are attached to the minimal parabolic $P_\text{min}=MAN$, for which $M$ is a maximal torus in a maximal compact subgroup of $G$. The Connes-Kasparov isomorphism was established  in \cite{PeningtonPlymen}.  A bit more generally, the matching theorem was established for semi-simple Lie groups having only one conjugacy class of Cartan subgroups by Valette in \cite{Valette85} (see Theorem 3.12).  In the complex case the correspondence provided by the matching theorem is as follows: the spin module $S_{\text{irr}}\otimes V_\mu$, where $V_\mu$ is irreducible with highest weight $\mu$, is matched to the associate class $[P_\text{min},\sigma]$, where the differential of  $\sigma\in\hat{M}$ is  $\mu{+}\rho_K$. 
\end{example}

\begin{example}
\begin{figure}[ht]
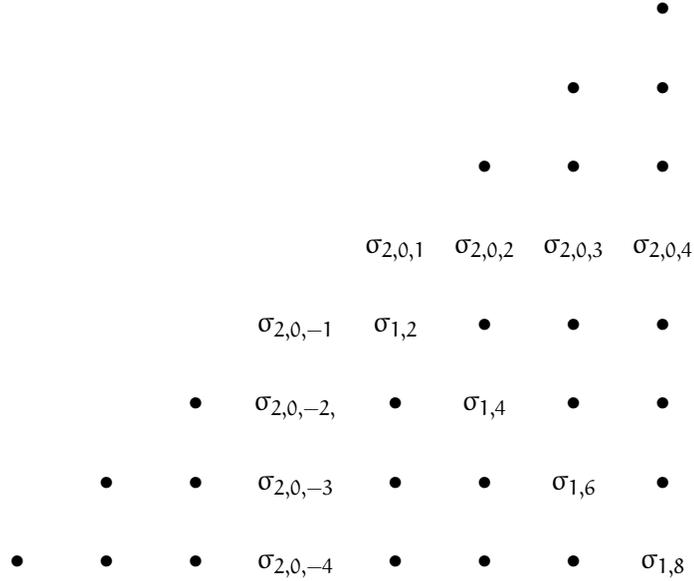

  \[ 
  \renewcommand{\arraystretch}{2.3}
\begin{array}{ccccccccc}
    & &&&&&&& \\
    & &&&&&&  &\Bullet \\
    & &&&&& & \Bullet& \Bullet \\
    & &&&&&\Bullet&\Bullet&\Bullet \\
    & &&&&\sigma_{2,0,1}&\sigma_{2,0,2}&\sigma_{2,0,3}&\sigma_{2,0,4} \\
    & &&&\sigma_{2,0,-1}&\sigma_{1,2}&\Bullet&\Bullet&\Bullet \\
    &         &         &\Bullet &\sigma_{2,0,-2,}&\Bullet &\sigma_{1,4}&\Bullet &\Bullet \\
    &         &\Bullet  &\Bullet &\sigma_{2,0,-3}&\Bullet &\Bullet&\sigma_{1,6}&\Bullet  \\
    &\Bullet  &\Bullet  &\Bullet &\sigma_{2,0,-4}&\Bullet &\Bullet & \Bullet &\sigma_{1,8} \\
\end{array} 
   \]
\caption{ 
The matching theorem for $Sp(4,\R)$.
The nodes in this diagram are the integer lattice points $(m,n)$ in the plane with $m{>}n$; the entry $\sigma_{2,2}$ appears in position  $(1,-1)$.
The $\sigma$-labels are some of the  discrete series attached to intermediate Levi subgroups of $Sp(4,\R)$, and together with the discrete series of $Sp(4,\R)$,  they give the full list of essential components in the tempered dual of $Sp(4,\R)$.  Each essential component is placed at the location $(m,n)=\mu{+}\rho_K$, where $\mu$ is the highest weight of the genuine irreducible representation of $\tilde K$   to which it is matched.   The  bullet points represent the discrete series  for $Sp(4,\R)$; their locations   are also  their Harish-Chandra parameters. 
}
\label{fig-sp4r-matching-theorem-summary}
\end{figure}
A more complicated and more interesting example is that of the real symplectic group  $G{=}Sp(4,\R)$.  

There are three components of the tempered dual  associated with the minimal parabolic subgroup $P_{\min} = MAN$ of $Sp(4,\R)$, given by three characters $\sigma_0$, $\sigma_1$ and $\sigma_2$ of the finite group $M$ (there are four characters altogether, but two lead to the same associate class).  None of the principal series components is essential.

There are two other associate classes of proper parabolic subgroups,  with Levi factors \[
L_1\cong GL(2,\R)\quad \text{and} \quad 
L_2 \cong GL(1,\R){\times} SL(2,\R).
\]
The compactly generated subgroups $M_1\subseteq L_1$ and $M_2\subseteq L_2$  both carry discrete series.
The group $M_1$ is isomorphic to $SL_{\pm}(2,\R)$, and its  discrete series are parametrized by positive integers; let us write these representations as $\sigma_{1,k}$ ($k>0$).  
The  group $M_2$ is isomorphic to $ O(1){\times} SL(2,\R)$,  and its  discrete series are para\-metrized by pairs $(\ell,k)$  where $\ell\in \Z_2$ and $k\in \Z$, $k{\ne } 0$; let us write these representations  as $\sigma_{2,\ell,k}$.  
Altogether, the discrete series 
\[
 \sigma_{1,k}, \quad (k\in \Z,\,\, k>0)\quad \text{and} \quad  \sigma_{2,\ell,k}\quad (\ell\in \Z_2,\,\, k\in\Z,\,\, k\ne 0)
 \]
label the  components in the tempered dual of $G$ that are associated to the intermediate parabolic subgroups, and the above representations label these components without repetition. The essential components associated to the intermediate parabolic subgroups are 
\[
 \sigma_{1,k}, \quad (k\in 2\Z,\,\, k>0)\quad \text{and} \quad  \sigma_{2,0,k}\quad (k\in\Z,\,\, k\ne 0).
 \]
 
Finally, there are the discrete series of $Sp(4,\R)$.
The maximal compact subgroup of $Sp(4,\R)$ is $K{\cong} U(2)$.  Using the diagonal maximal torus in $U(2)$, the irreducible representations of $K$ may be identified, via highest weights, with pairs of integers $(m,n)$ such that $m{\ge} n$, and the Harish-Chandra parameters of the discrete series may be identified with pairs $(m,n)$ with 
\[
m> n, \quad m\ne 0, \quad n\ne 0\quad \text{and} \quad m\ne -n.
\]
To describe the matching theorem, it is convenient to associate to the spin module 
$S_{\text{irr}}\otimes V_\mu$  (where $V_\mu$ is irreducible with highest weight $\mu$)  the parameter $\mu {+}\rho _K$. Compare Example~\ref{ex-discrete-series-and-h-c-parameters}. The parameters $\mu{+}\rho_K$  range over all   pairs  of integers $(m,n)$ with  $m{>}n$, and using  the $\mu{+}\rho_K$  parametrization, the matching theorem is illustrated in Figure~\ref{fig-sp4r-matching-theorem-summary}.  

The computations involved in  checking the matching theorem   are greatly simplified by David Vogan's theory of minimal $K$-types, and we shall say more about this in the second paper of the series \cite[Thm. 8.4]{MatchingTheorem}.
\end{example}

\section{First Proof of the Connes-Kasparov Isomorphism}
\label{sec-first-proof}

In this section we shall use the Matching Theorem formulated in the previous section to prove that the Connes-Kasparov index homomorphism is an isomorphism.  We shall follow the shortest route to do so, which uses the fact, proved by Kasparov, that the index homomorphism is a split-injective homomorphism of abelian groups (that is,   the index homomorphism has a left-inverse). While this is certainly a significant new ingredient in the proof,  injectivity is a considerably simpler  and more accessible result than surjectivity.  (In any case, in the next section we shall take a different approach to the proof of the Connes-Kasparov isomorphism that avoids Kasparov's result.)

Kasparov proved split injectivity  in a much broader context than the one we are considering here---involving both continuous and discrete groups---in the course of proving groundbreaking results on the Novikov conjecture in differential topology. But let us record his result as it applies in our case:

\begin{theorem*}[\cite{KK_Novikov}] 
The Connes-Kasparov index morphism
\[
R_{\text{\rm spin}}(K,\mathfrak{s}) \longrightarrow K_{\dim(G/K)}(C^*_r (G))
\]
is a split injection of abelian groups.
\end{theorem*}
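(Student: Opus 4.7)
The plan is to construct a left inverse to the Connes--Kasparov index map using Kasparov's dual-Dirac method, exploiting the fact that the symmetric space $X = G/K$ is a complete Riemannian manifold of nonpositive sectional curvature.  The strategy has three ingredients: reinterpret the index homomorphism as a Kasparov product with a Dirac class on $X$; construct a ``dual-Dirac'' class in the opposite direction; and verify a product identity that produces the splitting.

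First I would package the operators $\slashed{D}_S$ into a single $G$-equivariant Dirac class $[D] \in KK^G(C_0(X),\C)$ built from the spinor bundle of $X$.  Since $K$ is compact, Green's imprimitivity theorem supplies a Morita equivalence $C_0(X)\rtimes_r G \sim C^*(K)$, so that Kasparov descent applied to $[D]$ yields a homomorphism from $R(K) \cong K_0(C^*(K))$ to $K_{\dim(G/K)}(\Csr(G))$.  Restricted to the subgroup of genuine representations of $\widetilde K$, which via $S\mapsto \module(S)$ is identified with $R_{\spin}(K,\mathfrak{s})$, this descended map coincides (by a direct computation in Hilbert modules using Lemma \ref{lem-formula-for-f-sigma-nu}) with the Connes--Kasparov index homomorphism.

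Next, I would construct the dual-Dirac class $\eta \in KK^G(\C, C_0(X))$ from the $K$-equivariant diffeomorphism $\exp\colon \mathfrak{s} \to X$ afforded by nonpositive curvature.  Concretely, $\eta$ is represented by the Hilbert $C_0(X)$-module of $L^2$-sections of the spinor bundle on $X$, equipped with the self-adjoint operator given by Clifford multiplication by the geodesic radial vector field $x \mapsto \exp^{-1}(x)$.  Nonpositive curvature enters twice: it ensures that $\exp$ is a diffeomorphism, so that the symbol is nonzero away from the basepoint, and it controls geodesic divergence enough for the resulting operator to have compact resolvent in the Hilbert-module sense, yielding a genuine $G$-equivariant Kasparov bimodule.

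The crucial step is to prove that the Kasparov product $\gamma := \eta \otimes_{C_0(X)} [D] \in KK^G(\C,\C)$ restricts along $K\hookrightarrow G$ to $1_K \in R(K) = KK^K(\C,\C)$.  Upon restriction to $K$ the geometric picture collapses to the linear action of $K$ on $\mathfrak{s}$, and the identity reduces to equivariant Bott periodicity for the Euclidean Dirac operator on $\mathfrak{s}$, a classical computation.  The homomorphism induced by $\eta$ (via the same descent and Morita equivalence) then furnishes the desired left inverse: its composition with the Connes--Kasparov map is multiplication by $\gamma$ on $R(K)$-valued $K$-theory, which restricts to the identity on the subgroup $R_{\spin}(K,\mathfrak{s})$.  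The main obstacle is precisely this product computation, because the Dirac operator on $X$ differs from the flat Euclidean Bott--Dirac model by curvature terms; one handles this either by an equivariant straight-line homotopy along geodesics that flattens the metric, or by Kasparov's ``rotation trick'' that reduces the global product to its value on an infinitesimal neighborhood of the basepoint, where it is explicitly computable.
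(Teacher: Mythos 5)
The paper does not prove this statement: it is recorded as a direct citation of Kasparov's theorem from \cite{KK_Novikov}, which is invoked as a black box at exactly this point in Section~\ref{sec-first-proof}. (The whole point of the paper's Section~\ref{sec-second-proof} is to give a second proof of the Connes--Kasparov isomorphism that \emph{avoids} invoking this result.) Your proposal is therefore not comparable to a proof in the paper, but it is a sound summary of the dual-Dirac / $\gamma$-element method, which is indeed how the cited result is established.

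A few technical points to sharpen. The dual-Dirac element $\eta$ lives in $KK^G(\C, C_0(X))$ (or in Kasparov's Clifford-algebra variant $KK^G(\C, C_\tau(X))$), so the underlying module must be a Hilbert $C_0(X)$-module, built from the $C_0$-sections of the spinor or Clifford bundle, not the $L^2$-sections as you wrote; $L^2$-sections would only yield a Hilbert space. Correspondingly the analytic condition is not that the operator has compact resolvent, but that the \emph{bounded} self-adjoint operator $F$ given by Clifford multiplication by a normalized radial vector field satisfies $F^2 - 1 \in C_0(X, \End(S))$, i.e.\ $F^2-1$ is compact as a Hilbert-module operator; this is achieved simply because $F^2 - 1$ vanishes outside any neighborhood of the basepoint once the vector field is normalized to have unit length there. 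Nonpositive curvature enters chiefly to guarantee that the $K$-equivariant exponential map $\exp\colon \mathfrak{s}\to X$ is a diffeomorphism and that the radial vector field is globally smooth, $G$-continuous, and of bounded gradient; it does not feed into a resolvent estimate. With those corrections, the remaining structure of your outline — descent plus the Green imprimitivity Morita equivalence $C_0(G/K)\rtimes G\sim C^*(K)$ to identify the index map, the product computation showing $\gamma|_K = 1_K\in R(K)$ via a homotopy that flattens the metric along geodesics or via Kasparov's rotation argument, and the consequent left inverse — is the correct skeleton of Kasparov's proof.
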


\subsection*{Proof of the  Connes-Kasparov Isomorphism Theorem Using Split Injectivity}

To begin with,  Theorem~\ref{thm-k-theory-c-star-g-summarized} shows in particular that 
\[
K_{\dim(G/K)+1} \bigl (C^*_r (G)\bigr ) =0,
\]
which is one of the assertions in Theorem~\ref{thm-connes-kasparov-isomorphism}. The main task is to show that the index homomorphism 
\[
  R_{\mathrm{spin}}(K,\mathfrak{s}) \longrightarrow K_{\dim(G/K)}(\Csr  (G))
\]
is an isomorphism of abelian groups.

The $C^*$-algebra isomorphism  in Theorem~\ref{thm-CsrG} determines a $K$-theory direct sum decomposition
\begin{equation}
    \label{eq-k-theory-direct-sum-decomposition}
K_{\dim (G/K)} \big ( C^*_r (G)\bigr ) 
\cong  \bigoplus_{[P,\sigma]}
K_{\dim (G/K)} \bigl ( C_0 (\mathfrak{a}^*_P, \mathfrak{K} (\Ind_P^G H_\sigma) ) ^{W_\sigma}\bigr )
\end{equation}
If $S$ is a spin-module for $(K,\mathfrak{s})$, then we shall denote by 
\[
\Index_{[P,\sigma]} \bigl (\slashed{D}_S\bigr ) \in K_{\dim (G/K)} \bigl ( C_0 (\mathfrak{a}^*_P, \mathfrak{K} (\Ind_P^G H_\sigma) ) ^{W_\sigma}\bigr )
\]
the $[P,\sigma]$-component in \eqref{eq-k-theory-direct-sum-decomposition} of the image of $S$ under the Connes-Kasparov index homomorphism.

\begin{lemma}
\label{lem-vanishing-index-when-unmatched}
Let $S$ be an irreducible spin module for $(K,\mathfrak{s})$.
If $[P,\sigma]$ and $S$ are unmatched, then $\Index_{[P,\sigma]} \big ( \slashed{D}_S) =0$.
\end{lemma}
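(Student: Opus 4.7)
The plan is to exploit Lemma~\ref{lem-formula-for-f-sigma-nu} and Theorem~\ref{thm-square-of-principal-series-dirac-ops} to show that when $[P,\sigma]$ and $S$ are unmatched, the restriction of $\slashed{F}_S$ to the $[P,\sigma]$-summand of the Hilbert module decomposition is either zero or invertible (not merely Fredholm), and is therefore trivial in $K$-theory by the Remark just before the definition of the index homomorphism.

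First, I would separate the definition of ``unmatched'' (Definition~\ref{def-matching}) into two cases. In case (i), $[\Ind_P^G H_\sigma \otimes S]^K = 0$. By Lemma~\ref{lem-formula-for-f-sigma-nu}, the $[P,\sigma]$-summand of $[C^*_r(G)\otimes S]^K$ is then the zero Hilbert module, so $\Index_{[P,\sigma]}(\slashed{D}_S)=0$ trivially.  In case (ii), $[\Ind_P^G H_\sigma \otimes S]^K \neq 0$ but $\slashed{D}_{\sigma,0,S}$ is not identically zero on it. By Theorem~\ref{thm-square-of-principal-series-dirac-ops},
\[
\slashed{D}_{\sigma,0,S}^2 = \bigl (\|S\|^2 - \|\infch(\sigma)\|^2\bigr )\cdot I
\]
acts as a scalar. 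Since $\slashed{D}_{\sigma,0,S}$ is self-adjoint, its square is positive semidefinite, so the scalar is nonnegative, and case (ii) forces the strict inequality $\|S\|^2 > \|\infch(\sigma)\|^2$.

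Now applying Theorem~\ref{thm-square-of-principal-series-dirac-ops} at general $\varphi$,
\[
\slashed{D}_{\sigma,\varphi,S}^2 = \bigl (\|S\|^2 - \|\infch(\sigma)\|^2\bigr ) + \|\varphi\|^2 \ > \ 0
\]
for every $\varphi \in \mathfrak{a}_P^*$. Therefore $\slashed{D}_{\sigma,\varphi,S}$ is invertible on $[\Ind_P^G H_\sigma \otimes S]^K$ for every $\varphi$, and hence so is its bounded transform $\slashed{F}_{\sigma,\varphi,S}$. Moreover the norm of $\slashed{F}_{\sigma,\varphi,S}^{-1}$ stays uniformly bounded as $\varphi$ varies over $\mathfrak{a}_P^*$, because $\slashed{D}_{\sigma,\varphi,S}^2 \geq \|S\|^2 - \|\infch(\sigma)\|^2 > 0$ uniformly in $\varphi$.

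By Lemma~\ref{lem-formula-for-f-sigma-nu}, the Hilbert module operator that represents $\Index_{[P,\sigma]}(\slashed{D}_S)$ is pointwise multiplication by $\slashed{F}_{\sigma,\varphi,S}$; the uniform lower bound on $|\slashed{D}_{\sigma,\varphi,S}|$ ensures that this pointwise inverse assembles into a bounded adjointable operator on the $[P,\sigma]$-summand of $[C^*_r(G)\otimes S]^K$. Thus the operator representing the index is invertible in the $C^*$-algebra of bounded adjointable operators, and by the Remark preceding the definition of the Connes-Kasparov index homomorphism it defines the zero class. The only step that requires slight care is verifying that the $W_\sigma$-equivariance is preserved by the inverse, which is immediate from the intertwining property \eqref{eq-knapp-stein-intertwining-property} and the fact that $\slashed{D}_{\sigma,\varphi,S}$ is built from the principal series action; the main substance of the argument is simply the positivity observation made possible by the Parthasarathy-type formula of Theorem~\ref{thm-square-of-principal-series-dirac-ops}.
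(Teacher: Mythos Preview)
Your proof is correct and follows essentially the same approach as the paper: split into the two cases of Definition~\ref{def-matching}, use Theorem~\ref{thm-square-of-principal-series-dirac-ops} and self-adjointness to obtain the strict inequality $\|S\|^2 > \|\infch(\sigma)\|^2$ in the nontrivial case, and then conclude that $\slashed{D}_S^2$ is uniformly bounded below on the $[P,\sigma]$-summand so that $\slashed{F}_S$ is invertible there. Your additional remarks on the uniform bound for the inverse and on $W_\sigma$-equivariance are fine but not essential to the argument.
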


\begin{proof}
If $[\Ind_P^G H_\sigma \otimes S ] ^K=0$, then certainly $\Index_{[P,
\sigma]}(\slashed{D}_S) =0$.  If $[\Ind_P^G H_\sigma \otimes S ] ^K$ is non-zero but $[P,\sigma]$ and $S$ are unmatched, then the operator $\slashed{D}_{\sigma,0,S}$ is nonzero on $[\Ind_P^G H_\sigma \otimes S ] ^K$.  Since the Dirac operator is self-adjoint, the square is also nonzero on $[\Ind_P^G H_\sigma \otimes S ] ^K$, and therefore, by Theorem~\ref{thm-square-of-principal-series-dirac-ops}, 
\[
\|S\|^2 - \| \infch (\sigma)\| ^2 >0
\]
But Theorem~\ref{thm-square-of-principal-series-dirac-ops} asserts more generally that 
\[
\slashed {D}_{\sigma, \varphi, S}^2 = \|S\|^2 - \| \infch (\sigma)\| ^2 + \|\varphi \|^2,
\]
and therefore $\slashed{D}_S^2$ is uniformly bounded below over the $[P,\sigma]$-component of $C^*_r(G)$.  The bounded operator $\slashed{F}_S$ is therefore invertible there,  and hence the index is zero.
\end{proof}

\begin{proof}[Proof of Theorem~\ref{thm-connes-kasparov-isomorphism}]
We shall use the Matching Theorem. Let $S$ be an irreducible spin module.  Lemma~\ref{lem-vanishing-index-when-unmatched} implies that the image of $S$ under the index homomorphism is concentrated in the summand in \eqref{eq-k-theory-direct-sum-decomposition} associated to the unique $[P,\sigma]$ to which $S$ is matched.  Since the index homomorphism is injective, the image there must be nonzero.  In fact, because the index homomorphism is split injective, while the summand is isomorphic to $\Z$, the image must be a generator. That is, the index homomorphism maps the basis of $R_{\spin}(K,\mathfrak{s})$ determined by the irreducible spin modules  to the basis determined up to signs by Theorem~\ref{thm-k-theory-0f-essential-component}. 
\end{proof}

\section{Second Proof of the Connes-Kasparov Isomorphism}
\label{sec-second-proof}
In this final section we shall study the Dirac operator $\slashed{D}_S$ in more detail, and by doing so give  a proof of the Connes-Kasparov isomorphism that is independent of Kasparov's split-injectivity result. This  is probably more in line with the approach that Wassermann intended to take, as sketched  in the note \cite{NoteWassermann}.

\subsection*{K-Theoretic Preliminaries}

As we have  seen, the Connes-Kasparov index homomorphism  carries the natural basis for $R_{\spin} (K,\mathfrak{s})$ to the natural basis\footnote{To be accurate, both bases are defined up to choices of signs.}  for the $K$-theory of $C^*_r (G)$ (labeled by the essential components of the tempered dual; see Theorem \ref{thm-k-theory-c-star-g-summarized}).  A striking feature of the Connes-Kasparov index  is that in fact it carries   natural basis elements to natural basis elements \emph{at the level of cycles}, and not merely at the level of $K$-theory classes.  In this section we shall  describe those cycles.

\begin{definition} 
Let $V$ be a finite-dimensional Euclidean vector space of dimension $d$. A \emph{Bott element} for $V$ consists of a finite-dimensional $\Z/2$-graded Hilbert space $S$ with
\[
\dim (S) = 
\begin{cases} 
2^{d/2} & \text{$d$ even} \\
2^{(d{-}1)/2} & \text{$d$ odd}
\end{cases}
\]
and an $\R$-linear map $v\mapsto D_v$ from $V$ into the  odd-graded, self-adjoint operators 
on $S$ 
such that $D^2_v =   \| v\|^2$ for  all $v \in V$.   When the dimension of $V$ is  odd, we also require that $S$ be equipped with a  symmetry $\gamma$ as in 
 \eqref{eq-odd-grading-symmetry} that anti-commutes with all $D_v$.
\end{definition}

It follows from the elementary theory of Clifford algebras that Bott elements are unique up to isomorphism. Each Bott element may be regarded as   a  Fredholm operator on the Hilbert $C_0(V)$-module $C_0(V, S)$ of the sort considered by Kasparov ($D$ is unbounded, but one can take the bounded transform to obtain a bounded Fredholm operator $F$ if preferred). There is therefore an index 
\[
\Index (D) \in K_{d} (C_0(V)).
\]
Here is one form of the Bott periodicity theorem (see \cite[Theorem 7 on p.547]{Kasparov_Kfunctor}): 

\begin{theorem}\label{thm-Bott element}
Let $V$ be a finite-dimensional Euclidean vector space of dimension $d$.  The $K_d$-group of $C_0(V)$ is freely generated by   the index of any Bott element, and the $K_{d+1}$-group is zero. 
\end{theorem}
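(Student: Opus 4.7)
The plan is to prove Theorem~\ref{thm-Bott element} via iterated suspension, reducing the computation to $K_*(C_0(\R))$.

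First I would exploit multiplicativity of Bott elements under direct sums. If $V = V_1 \oplus V_2$ is an orthogonal decomposition, then a choice of Bott element for each $V_i$ gives rise to a Bott element for $V$ via the graded external tensor product construction, realized under the isomorphism $C_0(V) \cong C_0(V_1) \otimes C_0(V_2)$; the index of this composite Bott element is the external Kasparov product of the two individual indices. By the essential uniqueness of Bott elements up to isomorphism, which comes from the classification of irreducible Clifford modules, the assertion for $V$ follows from the assertions for $V_1$ and $V_2$. Hence by induction, the theorem reduces to the one-dimensional case.

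For $V \cong \R$, I would compute $K_*(C_0(\R))$ using the six-term exact sequence associated to the extension
\[
0 \longrightarrow C_0(\R) \longrightarrow C_0((-\infty, 0]) \longrightarrow \C \longrightarrow 0.
\]
Since $C_0((-\infty, 0])$ is contractible, the sequence collapses to a boundary isomorphism $\partial \colon K_0(\C) \stackrel{\cong}{\longrightarrow} K_1(C_0(\R))$, together with the vanishing $K_0(C_0(\R)) = 0$. These two facts give the abstract part of the theorem.

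The remaining and central task is to identify the index of the Bott element with a \emph{generator} of $K_1(C_0(\R))$, not merely a nonzero element. My preferred route would be the Dirac--dual-Dirac technique: I would construct a companion Dirac class $[d] \in KK^1(C_0(\R), \C)$ from the self-adjoint unbounded operator $-i\, d/dx$ on $L^2(\R)$ and compute the Kasparov product $[\mathrm{Bott}] \otimes_{C_0(\R)} [d]$ explicitly, showing it equals $\pm 1$ in $KK(\C, \C) \cong \Z$. This is the essential content of one-dimensional Bott periodicity, and the identity forces the Bott element to be a generator of an infinite cyclic group. The main obstacle is this Kasparov product computation; in the one-dimensional case, however, it is classical, and may alternatively be carried out by tracing the boundary map $\partial$ above directly on the explicit operator representing the Bott element.
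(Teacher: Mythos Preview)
The paper does not actually prove this theorem: it states it as ``one form of the Bott periodicity theorem'' and cites Kasparov \cite[Theorem~7 on p.~547]{Kasparov_Kfunctor} for the proof. So there is nothing in the paper to compare your argument against; you are supplying an independent proof sketch of a result the authors take as background.

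Your outline is a standard and essentially correct route to Bott periodicity. One point deserves tightening: in the reduction step you write that ``the assertion for $V$ follows from the assertions for $V_1$ and $V_2$'' by essential uniqueness of Bott elements. Uniqueness alone is not enough; you also need that the external Kasparov product
\[
K_{d_1}\bigl(C_0(V_1)\bigr) \otimes K_{d_2}\bigl(C_0(V_2)\bigr) \longrightarrow K_{d_1+d_2}\bigl(C_0(V)\bigr)
\]
carries a pair of generators to a generator. Since you are ultimately tensoring by $C_0(\R)$ one step at a time, this is just the suspension isomorphism $K_j(A)\cong K_{j+1}(A\otimes C_0(\R))$, which is elementary and does not presuppose periodicity, so there is no circularity---but it should be said explicitly. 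The one-dimensional computation via the six-term sequence is correct, and the identification of the Bott class with a generator via the Dirac--dual-Dirac pairing is the classical argument you describe.
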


\subsection*{Representation-Theoretic Preliminaries}
Now let  $[P,\sigma]$ be an essential associate class. 
As noted earlier, there 
  is a decomposition of the parabolically induced representation $\pi_{\sigma,0}$ into finitely many  irreducible subrepresentations, 
\begin{equation}
    \label{eq-decomposition-of-standard-into-irreducibles}
\Ind_P^G H_\sigma = \bigoplus_{\mu} X_\mu ,
\end{equation}
and the   index set in the direct sum is the set $\widehat R_\sigma$ of characters of the finite abelian group $R_\sigma$.  But we can index the sum in a different way using Vogan's theory of minimal $K$-types \cite{Voganbook}, and it  will be very useful to do so in what follows.  

It will not be important to present the precise definition of minimal $K$-type here.  It will suffice to recall that the \emph{$K$-types} of a representation $\pi$ of $G$ are the irreducible representations of $K$ that occur upon restriction of $\pi$ from $G$ to $K$, and that every representation has a finite number of \emph{minimal} $K$-types among these, which depend only on the set of all $K$-types in $\pi$.  

The deeper properties of minimal $K$-types that we shall use below are as follows:

\begin{theorem}
\label{thm-minimal-k-types}
Let $[P,\sigma]$ be an essential associate class, and let $S$ be the irreducible spin module to which it is matched.   
\begin{enumerate}[\rm (i)]

\item 
Each minimal $K$-type of $\Ind_P^G H_\sigma$ has multiplicity one\footnote{That is, the underlying irreducible representation of $K$ occurs precisely once in any decomposition of $\Ind_P^G H_\sigma$ into irreducible representations of $K$.}, and each irreducible direct summand $X_\mu$ of $\Ind_P^G H_\sigma$, as in \eqref{eq-decomposition-of-standard-into-irreducibles}, includes precisely one of these minimal $K$-types.

\item  If $X_\mu$ is any irreducible summand of $\Ind_P^G H_\sigma$,   then
\[
\dim  \bigl [ X_\mu \otimes S\bigr ]^K = 2^{ [(\dim(\mathfrak{a}_{\max}){+}1)/2 ] },
\]
where the brackets $[\,\,]$ in the exponent denote the integer part.

\item If $X_\mu$ is any irreducible summand of $\Ind_P^G H_\sigma$, and if $V_\mu\subseteq X_\mu $ is its minimal $K$-type, then the inclusion 
\[
[ V_\mu \otimes S ] ^K \longrightarrow [X_\mu \otimes S]^K
\]
is a vector space isomorphism.

\end{enumerate}
\end{theorem}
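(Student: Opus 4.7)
The plan is to deduce all three assertions from Vogan's theory of minimal $K$-types of standard tempered modules \cite{Voganbook}, together with the matching identity $\|S\| = \|\infch(\sigma)\|$ recorded in Remark~\ref{rem-norm-of-mod-s}. I begin with assertion (i), which is essentially a consequence of Vogan's theorem that the minimal $K$-types of any standard tempered module appear with multiplicity one, and that the finite set of minimal $K$-types of $\Ind_P^G H_\sigma$ is a torsor for $\widehat R_\sigma$ under the action coming from the Knapp-Stein intertwiners at $\varphi{=}0$. Since the decomposition $\Ind_P^G H_\sigma = \bigoplus_\mu X_\mu$ in \eqref{eq-decomposition-of-standard-into-irreducibles} is exactly the isotypic decomposition for this $R_\sigma$-action, each summand $X_\mu$ then contains exactly one of the minimal $K$-types and it appears there with multiplicity one.

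For assertion (iii), I would show that no non-minimal $K$-type $V_\lambda \subseteq X_\mu$ contributes to $[X_\mu \otimes S]^K$. Using the factorisation $S \cong S_{\text{irr}} \widehat{\otimes} \module(S)^*$ from \eqref{eq-canonical-morphism-for-s}, the nonvanishing of $[V_\lambda \otimes S]^K \cong \operatorname{Hom}_{\tilde K}(\module(S), V_\lambda \otimes S_{\text{irr}})$ together with a standard weight-norm comparison for the $\tilde K$-branching of $V_\lambda \otimes S_{\text{irr}}$ yields an upper bound $\|\lambda + \rho_K\| \leq \|S\|$. On the other hand, Parthasarathy's inequality, applied to the $K$-types of a standard tempered module, provides the matching lower bound $\|\lambda + \rho_K\| \geq \|\infch(\sigma)\|$, with its equality case characterising precisely the minimal $K$-types. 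The matching identity forces both inequalities to be equalities simultaneously, so that $V_\lambda$ must be the minimal $K$-type $V_\mu$; combined with (i), this gives the isomorphism in (iii).

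For (ii), I would compute the multiplicity of $\module(S)$ in $V_\mu \otimes S_{\text{irr}}$ directly by a Clifford-branching argument. Split $\mathfrak{s}$ orthogonally as $\mathfrak{s} = \mathfrak{s}' \oplus \mathfrak{a}_{\max}$, whereupon the Clifford algebra factors accordingly. The complement $\mathfrak{s}'$ is always even-dimensional, and a maximal torus of $K$ acts trivially on $\mathfrak{a}_{\max}$ and non-trivially on every weight space of $\mathfrak{s}'$; correspondingly, $S_{\text{irr}}$ factors as $S_{\mathfrak{s}'} \widehat{\otimes} S_{\mathfrak{a}_{\max}}$ as $\tilde K$-modules. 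Vogan's explicit formula for the highest weight of $V_\mu$, together with the matching identity, is precisely what forces the multiplicity of $\module(S)$ inside $V_\mu \otimes S_{\mathfrak{s}'}$ to be equal to one. The remaining factor then contributes $\dim(S_{\mathfrak{a}_{\max}}) = 2^{[(\dim(\mathfrak{a}_{\max})+1)/2]}$, giving the stated dimension.

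The main obstacle, and the reason the proof must be carried out in a dedicated sequel \cite{MatchingTheorem}, is establishing the precise equality case of Vogan's and Parthasarathy's inequalities for $K$-types of standard tempered modules, and relating this equality cleanly to the spin module $S$ via the matching condition. In the sequel, the essential associate classes will be constructed directly from Vogan's parametrising data, so that the minimal $K$-type of $X_\mu$ and the matched spin module are read off from the same parameters, thereby making the multiplicities in (i)--(iii) manifest.
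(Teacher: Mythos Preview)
The paper does not prove this theorem here; immediately after the statement it writes only ``We shall prove this theorem in \cite[Sec.~8]{MatchingTheorem} (mostly by collecting results from elsewhere in the representation theory literature).'' So your proposal is being compared not against an actual proof but against a forward reference, and in that respect it is entirely compatible with the paper: you too defer the substantive work to the sequel while supplying a more detailed plan than the paper does. Your ingredients---Vogan's multiplicity-one and $\widehat R_\sigma$-torsor results for (i), the matching identity together with Dirac/Parthasarathy-type bounds for (iii), and the factorisation $\Cliff(\mathfrak{s})\cong\Cliff(\mathfrak{s}')\widehat\otimes\Cliff(\mathfrak{a}_{\max})$ for (ii)---are the natural ones, and your final paragraph correctly identifies where the genuine work lies.

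One caution about your sketch for (iii): the two inequalities you invoke are not quite the standard statements, and as written they do not sandwich $\|\lambda+\rho_K\|$ in the way you claim. The Parthasarathy--Dirac inequality bounds $\|\nu+\rho_K\|$ from below by $\|\infch(\sigma)\|$ for $\tilde K$-types $\nu$ of $X_\mu\otimes S_{\text{irr}}$, not directly for $K$-types $\lambda$ of $X_\mu$; and the assertion that $[V_\lambda\otimes S]^K\ne0$ forces $\|\lambda+\rho_K\|\le\|S\|$ is not an immediate consequence of weight combinatorics (if $\module(S)$ occurs in $V_\lambda\otimes S_{\text{irr}}$, one gets a relation of the form $\mu_S+\rho_K=\lambda+\rho_G-\langle A\rangle$ for some subset $A$ of noncompact positive roots, which does not by itself yield the norm inequality you state). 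The equality-case characterisation of minimal $K$-types you appeal to therefore needs a sharper formulation---typically via Vogan's explicit lambda-norm description of lowest $K$-types of tempered modules, or via the Huang--Pand\v{z}i\'c theorem on Dirac cohomology---rather than the bare pair of inequalities you have written. This is presumably among the ``results from elsewhere'' the paper intends to collect in the sequel.
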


We shall prove this theorem in \cite[Sec.~8]{MatchingTheorem} (mostly by collecting results from elsewhere in the representation theory literature).

It follows from parts (i) and (iii) of the theorem, together with the direct sum decomposition \eqref{eq-decomposition-of-standard-into-irreducibles}, that if   $\{ V_\mu\}$ is the set of minimal $K$-types in $\Ind_P^G H_\sigma$, then the inclusion
\begin{equation}
    \label{eq-direct-sum-decomposition-of-dirac-cohomology}
 \bigoplus _{\mu} [V_\mu \otimes S]^K \longrightarrow 
 [\Ind_P^G H_\sigma \otimes S]^K 
 \end{equation}
 is a vector space isomorphism.  This gives a very concrete and convenient description of the space 
$[\Ind_P^G H_\sigma \otimes S]^K$.  The following lemmas examine the Dirac operators that act on this space.

 \begin{lemma}
 Let $[P,\sigma]$ be an essential associate class and let $S$ be the irreducible spin module to which it is matched. 
 The operators 
 \[
\slashed{D}_{\sigma,\varphi,S }\colon [V_\mu \otimes S]^K
\longrightarrow [V_\mu \otimes S]^K
\]
are linear functions of $\varphi \in\mathfrak{a}^{*}_P$.
\end{lemma}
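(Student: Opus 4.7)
The plan is to establish linearity in two clean steps: first verify that $\pi_{\sigma,\varphi}(X)$ depends affinely on $\varphi$ for every $X\in\mathfrak{g}$, and then use the matching hypothesis to kill the constant term on the relevant subspace.

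First I would unpack the realization of $\Ind_P^G H_\sigma$ as functions on $K$ (Definition~\ref{def-induced-representation-hilbert-space}) and examine how $\pi_{\sigma,\varphi}(X)$ depends on $\varphi$ for $X\in\mathfrak{g}$. The only place where $\varphi$ enters the definition of the induced representation is through the scalar twist $e^{-(i\varphi+\rho)(\log a)}$ in the covariance condition \eqref{eq-induced-space}. Writing $\exp(-tX)\cdot k$ in Iwasawa coordinates with respect to $P = M_PA_PN_P$ and differentiating at $t=0$ yields a decomposition
\[
\pi_{\sigma,\varphi}(X) \,=\, \pi_{\sigma,0}(X) \,-\, i\,\mathsf{m}_X(\varphi),
\]
where $\mathsf{m}_X(\varphi)$ is pointwise multiplication by the function $k\mapsto \varphi\bigl((\Ad(k^{-1})X)_{\mathfrak{a}_P}\bigr)$ on $K$. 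This multiplication operator is manifestly linear in $\varphi$. Summing over an orthonormal basis $\{X_a\}$ of $\mathfrak{s}$ gives
\[
\slashed{D}_{\sigma,\varphi,S} \,=\, \slashed{D}_{\sigma,0,S} \,-\, i\sum_a \mathsf{m}_{X_a}(\varphi)\otimes c(X_a),
\]
so the Dirac operator is affine in $\varphi$ as an operator on $\Ind_P^G H_\sigma\otimes S$.

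Next I would invoke the matching hypothesis to dispose of the constant term. Since $[P,\sigma]$ is essential and $S$ is the matched irreducible spin module, Definition~\ref{def-matching}(ii) (combined with Remark~\ref{rem-norm-of-mod-s} and Theorem~\ref{thm-square-of-principal-series-dirac-ops}) gives $\slashed{D}_{\sigma,0,S}=0$ on $[\Ind_P^G H_\sigma\otimes S]^K$, hence on each summand $[V_\mu\otimes S]^K$ of the decomposition \eqref{eq-direct-sum-decomposition-of-dirac-cohomology}. Consequently, on $[V_\mu\otimes S]^K$,
\[
\slashed{D}_{\sigma,\varphi,S} \,=\, -i\sum_a \mathsf{m}_{X_a}(\varphi)\otimes c(X_a),
\]
which is a genuinely linear function of $\varphi$.

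The remaining point --- that this operator actually maps $[V_\mu\otimes S]^K$ back into itself rather than into a different summand $[V_{\mu'}\otimes S]^K$ --- is the real content of the lemma and is where Vogan's theory of minimal $K$-types (Theorem~\ref{thm-minimal-k-types}) must be used in an essential way. The $K$-equivariance of $\slashed{D}_{\sigma,\varphi,S}$ by itself only places the image in $[\Ind_P^G H_\sigma\otimes S]^K$, so block-diagonality has to be extracted from the fact that $V_\mu$ is the unique minimal $K$-type of $X_\mu$ with multiplicity one in all of $\Ind_P^G H_\sigma$, combined with the isomorphism in Theorem~\ref{thm-minimal-k-types}(iii). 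I expect this to be the main obstacle, since it requires genuine input from the structural results on minimal $K$-types that are developed in the sequel \cite{MatchingTheorem}; the affine-structure and vanishing-at-zero pieces above are essentially formal once the Iwasawa decomposition calculation is carried out.
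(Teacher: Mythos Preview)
Your two-step argument---affine dependence of $\pi_{\sigma,\varphi}(X)$ on $\varphi$, then vanishing of the constant term via the matching hypothesis---is exactly the paper's proof. The only difference is cosmetic: the paper cites \cite[Prop.~11.47]{KnappVoganBook} for the affine dependence, while you write out the Iwasawa computation explicitly.

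Where you go astray is in your final paragraph. You flag the invariance of $[V_\mu\otimes S]^K$ under $\slashed{D}_{\sigma,\varphi,S}$ as ``the real content of the lemma'' and ``the main obstacle,'' but the paper does \emph{not} prove invariance here at all. Invariance is the content of the \emph{next} lemma (Lemma~\ref{lem-dirac-cohomology-computation}), and crucially it is only established for $\varphi$ in the $R_\sigma$-fixed subspace $\mathfrak{a}_P^{*,R_\sigma}$, not for all $\varphi\in\mathfrak{a}_P^*$. The mechanism there is exactly what you anticipate: for such $\varphi$ the summand $X_{\mu,\varphi}$ in \eqref{eq-decomposition-of-ind-sigma-tensor-exp-i-nu} is a $G$-subrepresentation with the same $K$-structure as $X_\mu$, so $\slashed{D}_{\sigma,\varphi,S}$ preserves $[X_{\mu,\varphi}\otimes S]^K=[V_\mu\otimes S]^K$ by Theorem~\ref{thm-minimal-k-types}(iii). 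For generic $\varphi\in\mathfrak{a}_P^*$ the induced representation is irreducible and there is no reason for the individual $[V_\mu\otimes S]^K$ to be invariant; the display in the lemma statement is therefore somewhat loosely written, and you should read the present lemma simply as asserting linearity of $\varphi\mapsto\slashed{D}_{\sigma,\varphi,S}$ on the ambient space $[\Ind_P^G H_\sigma\otimes S]^K$.
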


\begin{proof}
The action of $\mathfrak{g}$ on the smooth vectors in any principal series representation space  such as $\Ind_P^G H_\sigma {\otimes} \C_{i \varphi}$ is   affine-linear in $\varphi$ (compare \cite[Prop.~11.47]{KnappVoganBook}), and so $\slashed{D}_{\sigma, \varphi, S}$ is affine-linear in $\varphi$. But since $S$ is matched to $(P,\sigma)$, the operator $\slashed{D}_{\sigma, 0, S}$ is zero.  So $\slashed{D}_{\sigma, \varphi, S}$ is actually linear in $\varphi$.
\end{proof}

  \begin{lemma}
 \label{lem-dirac-cohomology-computation}
 Let $[P,\sigma]$ be an essential associate class and let $S$ be the irreducible spin module to which it is matched.  Denote  by $\mathfrak{a}^{*,R_\sigma}_P\subseteq \mathfrak{a}^*_P$ the subspace that is fixed under the action of the group $R_\sigma$.
 The image of each  direct summand in \eqref{eq-direct-sum-decomposition-of-dirac-cohomology} is invariant under  the Dirac operators \[
 \slashed{D}_{\sigma,\varphi,S }\colon [\Ind_P^G H_\sigma \otimes S]^K  \to [\Ind_P^G H_\sigma \otimes S]^K 
 \]
 for all $\varphi \in \mathfrak{a}^{*,R_\sigma}_P$.
 \end{lemma}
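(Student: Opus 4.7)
\emph{Proof plan.} The strategy is to show that $\slashed{D}_{\sigma,\varphi,S}$ is equivariant for a natural action of $R_\sigma$ on $[\Ind_P^G H_\sigma\otimes S]^K$, and that the direct summands $[V_\mu\otimes S]^K$ coincide with the $R_\sigma$-isotypic components for this action. Then equivariance automatically preserves each summand.

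First I would establish equivariance. For $\varphi\in \mathfrak{a}_P^{*,R_\sigma}$ and $w\in R_\sigma$, the identity $w(\varphi)=\varphi$ combined with the intertwining property \eqref{eq-knapp-stein-intertwining-property} says that $U_{w,\varphi}$ commutes with $\pi_{\sigma,\varphi}(g)$ for every $g\in G$, and hence with the action of $\mathfrak{g}$ on smooth vectors. The operator $U_{w,\varphi}\otimes I$ therefore commutes with $\slashed{D}_{\sigma,\varphi,S}$ on $[\Ind_P^G H_\sigma\otimes S]^K$.

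Next I would identify $[V_\mu\otimes S]^K$ as the $\mu$-isotypic subspace for this $R_\sigma$-action. Since $U_{w,\varphi}$ commutes with the $K$-action, which does not depend on $\varphi$, and since $V_\mu$ is a multiplicity-one $K$-irreducible subspace of $\Ind_P^G H_\sigma$ by Theorem~\ref{thm-minimal-k-types}~(i), the operator $U_{w,\varphi}$ preserves $V_\mu$ and acts there as a scalar $\chi_\mu(w,\varphi)$. The cocycle normalization $c\equiv 1$, together with $w^2=e$ in $R_\sigma$ and $U_{e,\varphi}=I$, yields $U_{w,\varphi}^2 = I$ through \eqref{eq-cocycle-relation}, so $\chi_\mu(w,\varphi)\in\{\pm 1\}$. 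Strong continuity of $U_{w,\varphi}$ in $\varphi$ and connectedness of the linear subspace $\mathfrak{a}_P^{*,R_\sigma}$ then force $\chi_\mu(w,\cdot)$ to be constant, equal to its value $\mu(w)$ at $\varphi=0$ supplied by the decomposition \eqref{eq-decomposition-of-ind-sigma-tensor-exp-i-nu}.

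To conclude, the characters $\mu$ are pairwise distinct by \eqref{eq-decomposition-of-ind-sigma-tensor-exp-i-nu}, and the summands $[V_\mu\otimes S]^K$ exhaust $[\Ind_P^G H_\sigma\otimes S]^K$ by \eqref{eq-direct-sum-decomposition-of-dirac-cohomology}; so these summands realize exactly the $R_\sigma$-isotypic decomposition, and $R_\sigma$-equivariance of the Dirac operator preserves each component. The main obstacle, in my view, is step three: verifying that $\chi_\mu(w,\varphi)$ really is constant on $\mathfrak{a}_P^{*,R_\sigma}$. Everything hinges on the involution $U_{w,\varphi}^2=I$ away from $\varphi=0$, which pins the \emph{a priori} continuous scalar to a discrete set of values; this in turn depends on the delicate scalar adjustments of the Knapp-Stein intertwiners made immediately after Theorem~\ref{th-R-group2-prelim}.
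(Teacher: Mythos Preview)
Your proof is correct and takes a genuinely different route from the paper's. The paper argues directly from the $G$-module structure: for $\varphi\in\mathfrak{a}_P^{*,R_\sigma}$ the induced representation $\Ind_P^G H_\sigma\otimes\C_{i\varphi}$ still decomposes into $G$-subrepresentations $X_{\mu,\varphi}$ as in \eqref{eq-decomposition-of-ind-sigma-tensor-exp-i-nu}, and since the Dirac operator is assembled from the $\mathfrak{g}$-action it automatically preserves each $[X_{\mu,\varphi}\otimes S]^K$; one then observes that the $K$-structure of $X_{\mu,\varphi}$ agrees with that of $X_\mu$, so that $[V_\mu\otimes S]^K=[X_{\mu,\varphi}\otimes S]^K$ by Theorem~\ref{thm-minimal-k-types}(iii). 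Your approach instead exploits the $R_\sigma$-symmetry: you show that $\slashed{D}_{\sigma,\varphi,S}$ commutes with $U_{w,\varphi}\otimes I$, and that on each multiplicity-one minimal $K$-type $V_\mu$ these intertwiners act by the fixed character $\mu$, so that the summands $[V_\mu\otimes S]^K$ are precisely the $R_\sigma$-isotypic components. The paper's argument is shorter once one grants that the underlying subspace $X_{\mu,\varphi}\subseteq\Ind_P^G H_\sigma$ is independent of $\varphi$; your argument makes the required constancy explicit via the discreteness-plus-continuity step forcing $\chi_\mu(w,\varphi)\in\{\pm1\}$ to be locally (hence globally) constant, and it meshes naturally with the intertwiner-based perspective used in the $K$-theory computations of Section~\ref{sec-knapp-stein-vogan-r-groups} and Theorem~\ref{thm-k-theory-0f-essential-component2}. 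Your identification of the delicate point---that $U_{w,\varphi}^2=I$ away from $\varphi=0$ depends on the normalization $c\equiv1$ (and, implicitly, $U_{e,\varphi}=I$, which follows from the same cocycle relation)---is exactly right.
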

 
 \begin{proof}
The representations $X_{\mu,\varphi}$ that appear in the direct sum decomposition
\begin{equation*}
\Ind_P^G H_\sigma{\otimes}\C_{i\varphi}  = \bigoplus _\mu X_{\mu, \varphi},
\end{equation*}
compare \eqref{eq-decomposition-of-ind-sigma-tensor-exp-i-nu}, 
 have the same $K$-isotypic decompositions as the representations $X_\mu$. Therefore    for every $\varphi \in \mathfrak{a}^{*,R_\sigma}_P$ the $K$-type $V_\mu$ appears in $X_{\mu,\varphi}$ as a minimal $K$-type, and the inclusion 
\[
[V_\mu \otimes S ] ^K \longrightarrow 
[X_{\mu,\varphi}\otimes S]^K
\]
is a vector space isomorphism, since $ [X_{\mu,\varphi}\otimes S]^K$  depends only on the $K$-structure of $X_{\mu,\varphi}$, and not on $\varphi$.    The Dirac operator $D_{\sigma,\varphi,S}$ certainly maps $[X_{\mu,\varphi}\otimes S]^K$ to itself, and so it maps $[V_\mu \otimes S ] ^K$ to itself, as claimed.
\end{proof}

\begin{theorem}\label{thm-index-is-bott-element}
If $[P,\sigma]$ is any essential associate class, and if   $S$ is the  irreducible spin module  to which it is matched,  then for any $\mu$ the family of Dirac operators 
\[
\slashed{D}_{\sigma,\varphi, S} \colon \bigl [  X_\mu  \otimes S\bigr ]^K 
\longrightarrow \bigl [   X_\mu  \otimes S\bigr ]^K \qquad (\varphi \in \mathfrak{a}_P^{*,R_\sigma}) 
\]
is a Bott element for $\mathfrak{a}_P^{*,R_\sigma}$.
\end{theorem}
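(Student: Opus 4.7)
The plan is to verify directly that the family $\{\slashed{D}_{\sigma,\varphi,S}\}_{\varphi\in\mathfrak{a}_P^{*,R_\sigma}}$, regarded as operators on the finite-dimensional space $[X_\mu\otimes S]^K$, satisfies every clause of the Bott-element definition. Because $[P,\sigma]$ is essential, Theorem~\ref{th-R-group3} yields $R_\sigma\cong\Z_2^q$ with $q=\dim\mathfrak{a}_P-\dim\mathfrak{a}_{\max}$ acting by pairwise orthogonal reflections on $\mathfrak{a}_P^*$; consequently $\dim\mathfrak{a}_P^{*,R_\sigma}=\dim\mathfrak{a}_{\max}=:d$, and by the parity lemma $d\equiv\dim(G/K)\pmod{2}$.

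First I would assemble the underlying data. The $\Z/2$-grading on $[X_\mu\otimes S]^K$ is inherited from the grading on $S$, and Theorem~\ref{thm-minimal-k-types}(ii) supplies the exact dimension $2^{[(d+1)/2]}$ required of a Bott element; the ``extra'' factor of $2$ in odd dimensions is accounted for by the fact that $S$, as a spin module for $(K,\mathfrak{s})$ with $\dim\mathfrak{s}$ odd, is built from $\Cliff(\mathfrak{s}\oplus\R)$ rather than $\Cliff(\mathfrak{s})$. Lemma~\ref{lem-dirac-cohomology-computation}, together with the vector-space isomorphism $[V_\mu\otimes S]^K\cong[X_\mu\otimes S]^K$ from Theorem~\ref{thm-minimal-k-types}(iii), then ensures that $\slashed{D}_{\sigma,\varphi,S}$ does preserve $[X_\mu\otimes S]^K$ for every $\varphi\in\mathfrak{a}_P^{*,R_\sigma}$, so the family of restrictions is well-defined.

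Next I would verify the algebraic conditions. The lemma immediately preceding Lemma~\ref{lem-dirac-cohomology-computation} gives $\R$-linearity of $\varphi\mapsto\slashed{D}_{\sigma,\varphi,S}$ (crucially, matching forces $\slashed{D}_{\sigma,0,S}=0$, removing the would-be constant term). Self-adjointness follows from the formula $\slashed{D}_{\sigma,\varphi,S}=\sum\pi_{\sigma,\varphi}(X_a)\otimes c(X_a)$: each $\pi_{\sigma,\varphi}(X_a)$ is skew-adjoint because $\pi_{\sigma,\varphi}$ is unitary, and each $c(X_a)$ is skew-adjoint by the Clifford convention, so the tensor product is self-adjoint. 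Odd-grading is immediate since $c(X_a)$ is odd and the first tensor factor preserves the (trivial) grading on $\Ind_P^G H_\sigma$. The crucial identity $\slashed{D}_{\sigma,\varphi,S}^2=\|\varphi\|^2\cdot I$ is Theorem~\ref{thm-square-of-principal-series-dirac-ops} combined with the matching condition $\|S\|=\|\infch(\sigma)\|$ from Remark~\ref{rem-norm-of-mod-s}: the two scalar terms in the Parthasarathy formula cancel, leaving only $\|\varphi\|^2$.

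When $d$ is odd, the Bott-element definition demands an additional odd-graded skew-symmetry $\gamma$ with $\gamma^*=-\gamma$ and $\gamma^2=-1$ anti-commuting with every $\slashed{D}_{\sigma,\varphi,S}$. Since $S$ is by definition a spin module for $(K,\mathfrak{s}\oplus\R)$, Clifford multiplication by the generator $e$ of the $\R$-summand yields an operator $c(e)$ on $S$, and hence on $[X_\mu\otimes S]^K$, that is odd, skew-adjoint, squares to $-1$, and anti-commutes with every $c(X_a)$ for $X_a\in\mathfrak{s}$, hence with $\slashed{D}_{\sigma,\varphi,S}$. This is the very same $\gamma$ that was used in the definition of the Connes-Kasparov index class, so the identification of our Bott element with the $[P,\sigma]$-component of that class will be automatic. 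The main obstacle in this argument is not any single technical step but rather the careful bookkeeping of gradings and dimensions, especially the interplay of $d=\dim\mathfrak{a}_{\max}$ with $\dim(G/K)$ and the role of the auxiliary $\R$-factor in the odd case; every substantive input (the matching identity, the dimension count, and invariance of $[X_\mu\otimes S]^K$) is supplied by previously established results.
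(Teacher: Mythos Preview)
Your proof is correct and follows essentially the same approach as the paper's. The paper's own proof is a one-liner---``This follows from the preceding two lemmas and Theorem~\ref{thm-square-of-principal-series-dirac-ops}''---and you have simply unpacked that sentence, verifying each clause of the Bott-element definition from exactly the same inputs: the linearity lemma, Lemma~\ref{lem-dirac-cohomology-computation} for invariance of $[X_\mu\otimes S]^K$, and Theorem~\ref{thm-square-of-principal-series-dirac-ops} together with matching for $\slashed{D}_{\sigma,\varphi,S}^2=\|\varphi\|^2$. Your explicit treatment of the dimension count via Theorem~\ref{thm-minimal-k-types}(ii) and of the symmetry $\gamma=c(e)$ in the odd case are details the paper leaves implicit.
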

\begin{proof} 
This follows from the preceding two lemmas and Theorem~\ref{thm-square-of-principal-series-dirac-ops}.
\end{proof}

\subsection*{Completion of the Second Proof of the Connes-Kasparov Isomorphism}
The Matching Theorem and Lemma~\ref{lem-vanishing-index-when-unmatched}  show that the Connes-Kasparov index morphism maps an irreducible spin module for $(K, \mathfrak{s})$ to the index of the family of Dirac operators
\begin{equation}
\label{eq-index-cycle-from-matching-theorem}
\slashed{D}_{\sigma, \varphi, S}: \bigl [  \Ind_P^G H_\sigma  \otimes S\bigr ]^K 
\longrightarrow \bigl [   \Ind_P^G H_\sigma   \otimes S\bigr ]^K,
\end{equation}
in $K_{\rm{dim}(G/K)}(C_0  (\mathfrak{a}^*_P, \mathfrak{K} (  \Ind_P^G H_\sigma)  ) ^{W_\sigma} )$, where $[P, \sigma]$ is the unique  essential associate class matched to $S$. To prove the Connes-Kasparov isomorphism, it remains to show that the homotopy class of the family \eqref{eq-index-cycle-from-matching-theorem} is a generator for the $K$-theory group.

By Theorem \ref{thm-index-is-bott-element}, for any summand $X_\mu$ of $\Ind_P^G H_\sigma$ the family of Dirac operators 
\[
\slashed{D}_{\sigma, \varphi, S}: \bigl [X_\mu  \otimes S\bigr ]^K 
\longrightarrow \bigl [X_\mu  \otimes S\bigr ]^K \qquad (\varphi \in \mathfrak{a}_P^{*,R_\sigma})
\]
is a Bott element for $\mathfrak{a}_P^{*,R_\sigma}$ and therefore its index is a 
generator of the $K$-theory group $K_{\rm{dim}(G/K)}(C_0(\mathfrak{a}_P^{*, R_\sigma}))$.  But \eqref{eq-index-cycle-from-matching-theorem}  is precisely the image of the cycle that defines the Connes-Kasparov index 
\[
\Index_{[P,\sigma]} (\slashed{D}_S) 
\in K_{\rm{dim} (G/K)}
\bigl (C_0 \bigl (\mathfrak{a}^*_P, \mathfrak{K} (  \Ind_P^G H_\sigma) \bigr ) ^{W_\sigma} \bigr)
\]
under the $K$-theory isomorphism in Theorem~\ref{thm-k-theory-0f-essential-component2}.  The proof is complete.

\subsection*{Acknowledgements}
This research was supported by NSF grants DMS-1952669 (NH), DMS-1800667,  DMS-1952557 (YS), DMS-1800666 and DMS-1952551 (XT).
Part of the research   was carried out within the online Research Community on Representation Theory and Noncommutative Geometry sponsored by the American Institute of Mathematics.

\bibliographystyle{alpha}
\bibliography{biblio}

\begin{thebibliography}{GAJV19}

\bibitem[AA22]{AfgoustidisAubert22}
A.~Afgoustidis and A.-M. Aubert.
\newblock C{$^*$}-blocks and crossed products for classical {$p$}-adic groups.
\newblock {\em Int. Math. Res. Not. IMRN}, (22):17849--17908, 2022.

\bibitem[AS77]{AtiyahSchmid}
M.~Atiyah and W.~Schmid.
\newblock A geometric construction of the discrete series for semisimple {L}ie
  groups.
\newblock {\em Invent. Math.}, 42:1--62, 1977.

\bibitem[BCH94]{BCH}
P.~Baum, A.~Connes, and N.~Higson.
\newblock Classifying space for proper actions and {$K$}-theory of group
  {$C^\ast$}-algebras.
\newblock In {\em {$C^\ast$}-algebras: 1943--1993 ({S}an {A}ntonio, {TX},
  1993)}, volume 167 of {\em Contemp. Math.}, pages 240--291. Amer. Math. Soc.,
  Providence, RI, 1994.

\bibitem[CCH16]{CCH1}
P.~Clare, T.~Crisp, and N.~Higson.
\newblock Parabolic induction and restriction via {C*}-algebras and {H}ilbert
  {C*}-modules.
\newblock {\em Compositio Math.}, 152:1286--1318, 2016.

\bibitem[CEN03]{ChabertEchterhoffNest}
J.~Chabert, S.~Echterhoff, and R.~Nest.
\newblock The {C}onnes-{K}asparov conjecture for almost connected groups and
  for linear {$p$}-adic groups.
\newblock {\em Publ. Math. Inst. Hautes \'Etudes Sci.}, 97:239--278, 2003.

\bibitem[Che73]{Chernoff73}
P.~R. Chernoff.
\newblock Essential self-adjointness of powers of generators of hyperbolic
  equations.
\newblock {\em J. Functional Analysis}, 12:401--414, 1973.

\bibitem[CHH88]{CowlingHaagerupHowe}
M.~Cowling, U.~Haagerup, and R.~Howe.
\newblock Almost {$L^2$} matrix coefficients.
\newblock {\em J. Reine Angew. Math.}, 387:97--110, 1988.

\bibitem[CHS22]{MatchingTheorem}
P.~Clare, N.~Higson, and Y.~Song.
\newblock On the {C}onnes-{K}asparov isomorphim, {II}: {T}he {V}ogan
  classification of essential components in the tempered dual.
\newblock Preprint, arXiv:2202.02857, 2022.

\bibitem[Dix77]{DixmierEnglish}
J.~Dixmier.
\newblock {\em {$C\sp*$}-algebras}.
\newblock North-Holland Publishing Co., Amsterdam-New York-Oxford, 1977.
\newblock Translated from the French by Francis Jellett, North-Holland
  Mathematical Library, Vol. 15.

\bibitem[EP09]{EchterhoffPfante}
S.~Echterhoff and O.~Pfante.
\newblock Equivariant {$K$}-theory of finite dimensional real vector spaces.
\newblock {\em M\"unster J. Math.}, 2:65--94, 2009.

\bibitem[GAJV19]{BCsurvey19}
M.~P. Gomez~Aparicio, P.~Julg, and A.~Valette.
\newblock The {B}aum-{C}onnes conjecture: an extended survey.
\newblock In {\em Advances in noncommutative geometry---on the occasion of
  {A}lain {C}onnes' 70th birthday}, pages 127--244. Springer, Cham, [2019]
  \copyright 2019.

\bibitem[HC66]{HarishChandra66}
Harish-Chandra.
\newblock Discrete series for semisimple {L}ie groups. {II}. {E}xplicit
  determination of the characters.
\newblock {\em Acta Math.}, 116:1--111, 1966.

\bibitem[Hig90]{KK_primer}
N.~Higson.
\newblock A primer on {$KK$}-theory.
\newblock In {\em Operator theory: operator algebras and applications, {P}art 1
  ({D}urham, {NH}, 1988)}, volume~51 of {\em Proc. Sympos. Pure Math.}, pages
  239--283. Amer. Math. Soc., Providence, RI, 1990.

\bibitem[HP06]{HuangPandzic}
J.-S. Huang and P.~Pand\v{z}i\'c.
\newblock {\em Dirac operators in representation theory}.
\newblock Mathematics: Theory \& Applications. Birkh\"auser Boston, Inc.,
  Boston, MA, 2006.

\bibitem[Kas81]{Kasparov_Kfunctor}
G.~G. Kasparov.
\newblock {The operator K-functor and extensions of C*-algebras}.
\newblock {\em {Math. USSR, Izv.}}, 16:513--572, 1981.

\bibitem[Kas83]{Kasparov83}
G.~G. Kasparov.
\newblock Index for invariant elliptic operators, {$K$}-theory and
  representations of {L}ie groups.
\newblock {\em Soviet Math. Dokl.}, 27(1):105--109, 1983.

\bibitem[Kas84]{KasparovICM}
G.~G. Kasparov.
\newblock Operator {$K$}-theory and its applications: elliptic operators, group
  representations, higher signatures, {$C^\ast$}-extensions.
\newblock In {\em Proceedings of the {I}nternational {C}ongress of
  {M}athematicians, {V}ol. 1, 2 ({W}arsaw, 1983)}, pages 987--1000. PWN,
  Warsaw, 1984.

\bibitem[Kas88]{KK_Novikov}
G.~G. Kasparov.
\newblock Equivariant {$KK$}-theory and the {N}ovikov conjecture.
\newblock {\em Invent. Math.}, 91(1):147--201, 1988.

\bibitem[Kna82]{Knapp_commut}
A.~W. Knapp.
\newblock Commutativity of intertwining operators for semisimple groups.
\newblock {\em Compositio Math.}, 46(1):33--84, 1982.

\bibitem[Kna86]{Knapp1}
A.~W. Knapp.
\newblock {\em Representation theory of semisimple groups}, volume~36 of {\em
  Princeton Mathematical Series}.
\newblock Princeton University Press, Princeton, NJ, 1986.
\newblock An overview based on examples.

\bibitem[Kna02]{KnappBeyond}
A.~W. Knapp.
\newblock {\em Lie groups beyond an introduction}, volume 140 of {\em Progress
  in Mathematics}.
\newblock Birkh\"{a}user Boston, Inc., Boston, MA, second edition, 2002.

\bibitem[KS71]{KS1}
A.~W. Knapp and E.~M. Stein.
\newblock Intertwining operators for semisimple groups.
\newblock {\em Ann. of Math.}, 93:489--578, 1971.

\bibitem[KS80]{KS2}
A.~W. Knapp and E.~M. Stein.
\newblock Intertwining operators for semisimple groups {II}.
\newblock {\em Invent. Math.}, 60(1):9--84, 1980.

\bibitem[KV95]{KnappVoganBook}
A.~W. Knapp and D.~A. Vogan, Jr.
\newblock {\em Cohomological induction and unitary representations}, volume~45
  of {\em Princeton Mathematical Series}.
\newblock Princeton University Press, Princeton, NJ, 1995.

\bibitem[Laf02a]{LafforgueICM}
V.~Lafforgue.
\newblock Banach {KK}-theory and the {B}aum-{C}onnes conjecture.
\newblock In {\em Proceedings of the ICM 2002 (Beijing)}, volume~II, pages
  795--812, 2002.

\bibitem[Laf02b]{LafforgueInventiones}
V.~Lafforgue.
\newblock K-th\'eorie bivariante pour les alg\`ebres de {B}anach et conjecture
  de {B}aum-{C}onnes.
\newblock {\em Invent. Math.}, 149(1):1--95, 2002.

\bibitem[Lan95]{Lance}
E.~C. Lance.
\newblock {\em Hilbert {$C^*$}-modules}, volume 210 of {\em London Mathematical
  Society Lecture Note Series}.
\newblock Cambridge University Press, Cambridge, 1995.

\bibitem[Lip70]{Lipsman70}
R.~L. Lipsman.
\newblock The dual topology for the principal and discrete series on
  semi-simple groups.
\newblock {\em Trans. Amer. Math. Soc.}, 152:399--417, 1970.

\bibitem[Mei13]{Meinrenken13}
E.~Meinrenken.
\newblock {\em Clifford algebras and {L}ie theory}, volume~58 of {\em
  Ergebnisse der Mathematik und ihrer Grenzgebiete. 3. Folge. A Series of
  Modern Surveys in Mathematics [Results in Mathematics and Related Areas. 3rd
  Series. A Series of Modern Surveys in Mathematics]}.
\newblock Springer, Heidelberg, 2013.

\bibitem[Mil17]{MilneAlgGroups}
J.~S. Milne.
\newblock {\em Algebraic groups}, volume 170 of {\em Cambridge Studies in
  Advanced Mathematics}.
\newblock Cambridge University Press, Cambridge, 2017.
\newblock The theory of group schemes of finite type over a field.

\bibitem[Par72]{Parthasarathy}
R.~Parthasarathy.
\newblock Dirac operator and the discrete series.
\newblock {\em Ann. of Math. (2)}, 96:1--30, 1972.

\bibitem[Ped79]{Pedersen79}
G.~K. Pedersen.
\newblock {\em {$C^{\ast} $}-algebras and their automorphism groups}, volume~14
  of {\em London Mathematical Society Monographs}.
\newblock Academic Press, Inc. [Harcourt Brace Jovanovich, Publishers],
  London-New York, 1979.

\bibitem[PP83]{PeningtonPlymen}
M.~G. Penington and R.~J. Plymen.
\newblock The {D}irac operator and the principal series for complex semisimple
  {L}ie groups.
\newblock {\em J. Funct. Anal.}, 53(3):269--286, 1983.

\bibitem[Rie80]{Rieffel80}
M.~A. Rieffel.
\newblock Actions of finite groups on {$C^{\ast} $}-algebras.
\newblock {\em Math. Scand.}, 47(1):157--176, 1980.

\bibitem[RLL00]{Rordam}
M.~R{\o}rdam, F.~Larsen, and N.~Laustsen.
\newblock {\em An introduction to {$K$}-theory for {$C^*$}-algebras}, volume~49
  of {\em London Mathematical Society Student Texts}.
\newblock Cambridge University Press, Cambridge, 2000.

\bibitem[Ros84]{RosenbergNeptun}
J.~Rosenberg.
\newblock Group {$C^{\ast} $}-algebras and topological invariants.
\newblock In {\em Operator algebras and group representations, {V}ol. {II}
  ({N}eptun, 1980)}, volume~18 of {\em Monogr. Stud. Math.}, pages 95--115.
  Pitman, Boston, MA, 1984.

\bibitem[RW98]{RaeWillMorita}
I.~Raeburn and D.~P. Williams.
\newblock {\em Morita equivalence and continuous-trace {C}*-algebras},
  volume~60 of {\em Mathematical {S}urveys and {M}onographs}.
\newblock American {M}athematical {S}ociety, Providence, RI, 1998.

\bibitem[Val84]{Valette84}
A.~Valette.
\newblock {$K$}-theory for the reduced {$C^{\ast} $}-algebra of a semisimple
  {L}ie group with real rank {$1$} and finite centre.
\newblock {\em Quart. J. Math. Oxford Ser. (2)}, 35(139):341--359, 1984.

\bibitem[Val85]{Valette85}
A.~Valette.
\newblock Dirac induction for semisimple {L}ie groups having one conjugacy
  class of {C}artan subgroups.
\newblock In {\em Operator algebras and their connections with topology and
  ergodic theory ({B}u\c steni, 1983)}, volume 1132 of {\em Lecture Notes in
  Math.}, pages 526--555. Springer, Berlin, 1985.

\bibitem[Vog81]{Voganbook}
D.~A. Vogan, Jr.
\newblock {\em Representations of real reductive {L}ie groups}, volume~15 of
  {\em Progress in Mathematics}.
\newblock Birkh\"{a}user, Boston, Mass., 1981.

\bibitem[Was87]{NoteWassermann}
A.~Wassermann.
\newblock Une d\'emonstration de la conjecture de {C}onnes-{K}asparov pour les
  groupes de {L}ie lin\'eaires connexes r\'eductifs.
\newblock {\em C. R. Acad. Sci. Paris}, 18(304):559--562, 1987.

\end{thebibliography}

\end{document}